% 2017-07-07_article_Boltz_.tex
%
% !!!IMPORTANT NOTE: Please read carefully all information including those preceded by % sign
%Before you compile the tex file please download the class file AIMS.cls from the following URL link to the
%local folder where your tex file resides. http://aimsciences.org/journals/tex-sample/AIMS.cls.
\documentclass[12pt,reqno, a4paper]{amsart}
\usepackage{amsmath}
\usepackage{mathrsfs}
\usepackage{amssymb}
  \usepackage{paralist}
  \usepackage{graphics} %% add this and next lines if pictures should be in esp format
  \usepackage{epsfig} %For pictures: screened artwork should be set up with an 85 or 100 line screen
\usepackage{graphicx}
\usepackage{pgfplots}
\pgfplotsset{compat=1.5}
\usepackage{epstopdf}%This is to transfer .eps figure to .pdf figure; please compile your paper using PDFLeTex or PDFTeXify.
\usepackage[colorlinks=true]{hyperref}
\usepackage{dsfont}
\usepackage{float}
%\usepackage{showkeys}
%\usepackage{refcheck}
   % Warning: when you first run your tex file, some errors might occur,
   % please just press enter key to end the compilation process, then it will be fine if you run your tex file again.
   % Note that it is highly recommended by AIMS to use this package.
%\hypersetup{urlcolor=blue, citecolor=red}
%\usepackage{hyperref}

%  \textheight=8.2 true in
%   \textwidth=5.0 true in
%    \topmargin 30pt
%     \setcounter{page}{1}

% The next 5 line will be entered by an editorial staff.
%t
 % Please minimize the usage of "newtheorem", "newcommand", and use
 % equation numbers only situation when they provide essential convenience
 % Try to avoid defining your own macros

\newtheorem{theorem}{Theorem}[section]

\newtheorem{lemma}[theorem]{Lemma}
\newtheorem{proposition}[theorem]{Proposition}

\theoremstyle{definition}

\newtheorem{remark}[theorem]{Remark}

\newcommand{\ep}{\varepsilon}
\newcommand{\eps}[1]{{#1}_{\varepsilon}}

%%%%%%%%%%%%%%%%%%%%%%%%%%
\let\bb\mathbb
\def \R {\mathbb{R}}
\def \N {\mathbb{N}}
\def \T {\mathbb{T}}
\def \L {\mathbb{L}}
\def \Z{\mathbb{Z}}

\newcommand\RR{{{\mathbb R}}}
\newcommand\NN{{{\mathbb N}}}
\newcommand\cF{{\mathcal F}}

\numberwithin{equation}{section}

\begin{document}
%====================================================================
% DEBUT
%====================================================================
%% Place the running title of the paper with 40 letters or less in []
 %% and the full title of the paper in { }.
\title[Numerical computation for Boltzmann equation] %Use the shortened version of the full title
      {Numerical computation for the non-cutoff radially symmetric homogeneous Boltzmann equation}

\date{\today}
%\thanks{$^*$ Corresponding author}
% Place all authors' names in [ ] shown as running head, Leave { } empty
% Please use `and' to connect the last two names if applicable
% Use FirstNameInitial.  MiddleNameInitial. LastName, or only last names of authors if there are too many authors
%\author[Ibrahim JRAD ]{}
\author[L. Glangetas, I. Jrad ]{}

% It is required to enter 2010 MSC.
\subjclass[2010]{34K08, 35Q20, 35-04, 35P05, 35P30, 80M22}
%msc2010  
%34-XX 	Ordinary differential equations
%34K08   Spectral theory of functional-differential operator
%35-XX	Partial differential equations
%35Q20   Boltzmann equations
%35-04   Explicit machine computation and programs (not the theory of computation or programming)
%35Pxx 	Spectral theory and eigenvalue problems [See also 47Axx, 47Bxx, 47F05]
%35P05   General topics in linear spectral theory
%35P30   Nonlinear eigenvalue problems, nonlinear spectral theory
% 80-XX	Classical thermodynamics, heat transfer {For thermodynamics of solids, see 74A15}
% 80M22  	Spectral methods
% Please provide minimum  5 keywords.
%
\keywords{Boltzmann equation, kinetic equations, spectral decomposition, symbolic computation, numerical computation}
%
% Email address of each of all authors is required.
% You may list email addresses of all other authors, separately.
\email{leo.glangetas@univ-rouen.fr} 
\email{brahimjrad92@hotmail.com}

% Put your short thanks below. For long thanks/acknowlegements,
%please go to the last acknowlegments section.
\thanks{The second author is supported by a grant from Lebanon}

% Add corresponding author at the footnote of the first page if it is necessary.
% Plase add $^*$ adjacent to the corresponding author's name on the first page.
% The example shown in this template is if the first author is the corresponding author.
%\thanks{$^*$ Corresponding author}
%\it{\textbf{E-mail addresses:}}
%leo.glangetas@univ-rouen.fr (L. Glangetas), Ibrahim.Jrad@etu.univ-rouen.fr (I. Jrad)

%\centerline{\today}
%\medskip
\maketitle

% Enter the first author's name and address:
\centerline{\scshape L\'eo Glangetas, Ibrahim Jrad}
\medskip
\medskip
{\footnotesize
% please put the address of the first author
 \centerline{\it{Universit\'e de Rouen Normandie, UMR 6085-CNRS, Math\'ematiques}}
   %\centerline{Other lines}
   \centerline{\it{Avenue de l'Universit\'e, BP.12, 76801 Saint Etienne du Rouvray, France}}
} % Do not forget to end the {\footnotesize by the sign }
% Enter the second author's name and address:
%\centerline{\scshape I. Jrad}
%\medskip
%{\footnotesize
%% please put the address of the third author
% \centerline{\it{$^2$ Universit\'e de Rouen, UMR 6085-CNRS, Math\'ematiques}}
%   %\centerline{Other lines}
%   \centerline{\it{Avenue de l'Universit\'e, BP.12, 76801 Saint Etienne du Rouvray, France}}
%} % Do not forget to end the {\footnotesize by the sign }
%% Enter the third author's name and address:
%%\centerline{\scshape C.-J. Xu}
%\medskip
%{\footnotesize
%% please put the address of the third author
% \centerline{\it{$^3$ Universit\'e de Rouen, UMR 6085-CNRS, Math\'ematiques}}
%   %\centerline{Other lines}
%   \centerline{\it{Avenue de l'Universit\'e, BP.12, 76801 Saint Etienne du Rouvray, France}}
%} % Do not forget to end the {\footnotesize by the sign }

%\smallskip

%The abstract of your paper
\begin{abstract}
For the non cutoff radially symmetric homogeneous Boltzmann equation
with Maxwellian molecules, 
we give the numerical solutions using symbolic manipulations and 
spectral decomposition of Hermit functions.
The initial data can belong to some measure space. 
\end{abstract}

\tableofcontents

%====================================================================
% SECTION 1
%====================================================================

\section{Introduction}%\label{section intro}

\subsection{The Boltzmann equation}%\label{section intro Bolt}

The Boltzmann equation, derived by Boltzmann in 1872 (and Maxwell 1866), 
models the behavior of a dilute gas (see \cite{Boltz}).
As we know, Boltzmann has created a theory which described the movement 
of gases as balls which could bump and rebound against each other \cite{C, DP14}. 
This model can be considered by one of many cases which represent the so-called kinetic equation. 
Presently, the diversity of sciences and applications contains these models 
such as rarefied gas dynamics, semiconductor modeling, radiative transfer, 
and biological and social sciences. 
This type of equations is made by including a combination of a linear transport term and several interaction terms which provide the time evolution of the distribution 
of particles in the phase space. 
The equation that bears his name is the following
\begin{equation*} %\label{eq Boltzmann f}
\partial_t f + v.\nabla_{x} f ={\bf {Q}}(f, f)
\end{equation*}
where  
$f = f(t,x,v) \geq 0$ is the probability density to find a particle at the time $t$, 
on the position $x$ and with velocity $v$
where the physical and the velocity space are located in three dimensions. 
The term $v.\nabla_{x} f$ describes the free action of particles and
${\bf {Q}}(f, f)$ is a bilinear operator which describes the binary collision process.
It is called the Boltzmann collision operator and given by
\begin{equation*}
{\bf {Q}}(g,f)(v)
=\int_{\mathbb{R}^{3}}
\int_{\mathbb{S}^{2}}
B(v-v_{\ast},\sigma)
(g(v_{\ast}^{\prime})f(v^{\prime})-g(v_{\ast})f(v))
dv_{\ast}d\sigma
\end{equation*}
where for $\sigma\in \mathbb{S}^{2}$, the symbols $v_{\ast}^{\prime}$ 
and $v^{\prime}$ are abbreviations for the expressions,
$$
v^{\prime}=\frac{v+v_{\ast}}{2}+\frac{|v-v_{\ast}|}{2}\sigma,\,\,\,\,\, v_{\ast}^{\prime}
=\frac{v+v_{\ast}}{2}-\frac{|v-v_{\ast}|}{2}\sigma,
$$
which are obtained in such a way that collision preserves momentum and kinetic energy, namely
$$
v_{\ast}^{\prime}+v^{\prime}=v+v_{\ast},\,\,\,\,\, 
|v_{\ast}^{\prime}|^{2}+|v^{\prime}|^{2}=|v|^{2}+|v_{\ast}|^{2} 
$$
where $|\cdot|$ is the Euclidean norm on $\mathbb{R}^3$.
Note that $v$, $v^{\prime}$ are the velocities before collision 
and $v_{\ast}$, $v_{\ast}^{\prime}$ the velocities after collision.  

The non-negative cross section $B(z,\sigma)$ depends only on $|z|$ 
and the scalar product $\frac{z}{|z|}\cdot\sigma = \cos\theta$ 
where $\theta$ is the deviation angle. Without loss of generality, 
we may assume that this cross section is supported on the set $\cos\theta\geq0$.
See for instance \cite{NYKC1} for more details on the cross section 
and \cite{Villani} for a general collision kernel.
For physical models, it usually takes the form
$$
B(v-v_{\ast},\sigma)=\Phi(|v-v_{\ast}|)b(\cos\theta),~~~~
\cos\theta=\frac{v-v_{\ast}}{|v-v_{\ast}|}\cdot\sigma,~~ 0\leq\theta\leq\frac{\pi}{2},
$$
where $\Phi(|v-v_{\ast}|) = |v-v_{\ast}|^{\gamma}$ is a kinetic factor 
and $\gamma > -3$.

In this work, we consider the spatially homogeneous case, that means the density distribution   
 $f=f(t,v)$ depends on the variables $t\geq0$, $v\in\mathbb{R}^{3}$
 and is uniform with respect to $x$.
 So that the Boltzmann equation reads as
\begin{equation}\label{eq Boltz f}
\left\{
\begin{array}{ll}
   \partial_t f= {\bf {Q}}(f,f),\\
  f(0,v)=F(v)
\end{array}
\right.
\end{equation}
where the initial data $F$ is depends only on $v$.
For the collision kernel, we study only the Maxwellian molecules and 
non-cutoff cases (see \cite{ Desv96, Desv97, DG99, DG2000, DW, GS, NYKC14}), that means
the kinetic factor $\Phi\equiv 1$ and 
\begin{equation}\label{non-cutoff-intro}
  b(\cos\theta) \approx \frac{1}{|\theta|^{2+2s}}, 
  \quad 0<s<1, \,\, \theta\in  \bigl(0,\frac{\pi}{2}\bigr].
\end{equation}
\subsection{Results on the Boltzmann equation}%\label{sect-intro-results}
With the previous assumption (the non-cutoff case) on the cross-section, 
there is existence of a weak solution for the Boltzmann equation \eqref{eq Boltz f}
for a positive initial value $F \in L^{1}_{2+\delta}(\RR^3)$. 
See \cite{V98} and many others.

Moreover, it is well-known that there is a regularization effect in Sobolev 
and Schwartz or analytic spaces for any time $t>0$ 
(we refer the reader to \cite{Desv96, Desv97} and recently \cite{AMUXY})
and that the solutions converge to the Gaussian when the time tends to infinity (\cite{GS}). 

An important point that our distribution lives in a multidimensional space: 
this reason make us think that we have a numerical problem because in this case 
the computational cost is more or less forbidden \cite{DP14}. 
The study of the numerical part for kinetic equations is not obvious 
due to many difficulties come from the computational cost. 
To clarify more, we mention two of these difficulties:
It is clear the appearing of multiple scales, and then to get out of the resolution 
of the stiff dynamics, one should build suitable numerical schemes 
\cite{Ji95, Ji99, BLM, PD, Ji12, DP13}. 
The other one is that the collision operator is defined by multidimensional integrals 
and to compute one should solve it point by point as physical space \cite{PR, FMP}.
To treat kinetic equations numerically, there is several ways which are used over 
the centuries until now: 
probabilistic numerical methods such as 
Direct Simulation Monte Carlo (DSMC) schemes \cite{C, Bi}, and, 
deterministic numerical methods such as finite volume, semi-Lagrangian 
and spectral schemes \cite{DP14}. 

There are two important deterministic methods which are used in the past decades : 
the discrete velocity method (DVM) \cite{GSJ, FJ, AAJ, BCD98, VA, ZYY} and 
the Fourier spectral method (FSM) \cite{AS, LB, PR, GHHH16, ZYY}. Due to its discrete nature, 
the DVM preserves positivity of the distribution function, the H-theorem 
and the exact conservation of mass, energy and momentum.
Note that the Fourier spectral method is based on two main things : 
the truncation of the collision operator and 
the restriction of the distribution function to an appropriate cube,  
for more details see \cite{PR, MP}. 
   
Our goal is to present an alternative method to solve formally and numerically 
the homogeneous Boltzmann equation in the non-cutoff case.
In this work, we consider the radial symmetric case
and we use a spectral method : 
we first compute the spectral coefficients of the solution 
with a formal computation software 
(Maple$^{\mbox{\scriptsize{\textregistered}}}$13; the codes can be provided). 
We then approximate these exact solutions and check the numerical results.  

The used method helps us to motivate our work in several ways: 
It let us in the physical view understand more the behavior of the solutions and
as we compute the first exact projections of the solutions on the spectral basis, 
that is in the numerical view, some other algorithms can be tested in the non-cutoff case 
(recall that the explicit 2D ``BKW'' solutions, obtained independently in \cite{B, KW} 
are used to test the accuracy of the numerical methods in the case 
of a regular collision kernel $B\equiv 1$, see for example \cite{ZYY}).
Finally, we do hope that our work will give some clues to formulate new mathematical conjectures.

The paper is organized as follows. In section 2, we state the main theoretical results. 
The numerical details and algorithms are provided in section 3. Sections 4 and 5 
present the numerical results of the Boltzmann equation with different initial data 
for the Cauchy problem: 
we discuss in section 4 the results for a small $L^2$ initial data (bi-Gaussian); 
in section 5, we consider the case of a measure initial data. 
After that, we give a conclusion for this work. 
The paper ends with an appendix where we set some technical results.

%====================================================================
% SECTION 2
%====================================================================
\section{Theoretical results}

In this section, we present some theoretical parts: we begin by linearizing 
the Boltzmann equation and giving the spectral decomposition of this equation.  

\subsection{Linearization of the Boltzmann equation}%\label{theo res.}
We remark that $Q(\mu,\mu)=0$
where the Gaussian function is defined by 
\begin{equation*}
% M(v) = \frac{\rho}{(2 \pi T)^{3/2}} e^{-\frac{|v-v_0|^2}{2 T}}%
%M(v) = \frac{\rho}{T^{3/2}} \, \mu \left( \frac{|v-v_0|}{T^{\frac{1}{2}}}\right)
%\,\,\text{where}\,\
\mu(v) = \frac{1}{(2 \pi )^{3/2}} \, e^{-\frac{|v|^2}{2}}
\end{equation*}
and it is a stationary solution of the Boltzmann equation.
We consider now a perturbation $g$ of the Gaussian. Then the solution $f$ 
of \eqref{eq Boltz f} can be written as
\begin{align*}
&f(t,v) = \mu(v) + \sqrt{\mu (v)} \, g(t,v),\\
&F(v) = \mu(v) + \sqrt{\mu (v)} \, G(v).
\end{align*}
It is easy to show that $g$ is a solution of the Cauchy problem  
\begin{equation} \label{eq Boltz g}
\left\{ \begin{aligned}
         &\partial_t g+\mathcal{L}(g)={\bf \Gamma}(g, g),\,\\
         &g|_{t=0}=g(0,v)=G(v)
\end{aligned} \right.
\end{equation}
where
\begin{align*}
&\mathcal{L}(g) = -\frac{1}{\sqrt{\mu}}[{\bf {Q}}(\sqrt{\mu}g,\mu)
                +{\bf {Q}}(\mu,\sqrt{\mu}g)]  
\end{align*}
is a linear operator and
\begin{align*}
&{\bf {\Gamma}}(g, h)   = \frac{1}{\sqrt{\mu}}{\bf {Q}}(\sqrt{\mu}g,\sqrt{\mu}h)
\end{align*}
is a nonlinear operator.
We decompose the solution of \eqref{eq Boltz g} into a linear and nonlinear part:
\[
g(t,v) = 
\underbrace{e^{- t \mathcal{L}} \, G(v)}_{\text{linear part}} + 
\underbrace{e^{- t \mathcal{L}} \, h(t,v)}_{\text{nonlinear part}} 
\]
where 
$e^{\alpha \mathcal{L}}$ is the exponential of the linear operator 
defined by his spectral decomposition (see below) and 
the new function $h(t,v)$ satisfies the following equation
\begin{equation} \label{eq Boltz h}
\left\{\begin{array}{ll}
  & \displaystyle \partial_t h
    = e^{t \mathcal{L}}  \, 
      {\bf {\Gamma}}(e^{-t \mathcal{L}} \,(G+h),e^{-t \mathcal{L}} \,(G+h)),\\
  & h(0,v) = 0. \end{array}
\right.
\end{equation}
The linearized operator $\mathcal{L}$ is a positive unbounded symmetric operator 
on $L^2 (\mathbb{R}^3_v)$ (see \cite{C, NYKC2, NYKC1, NYKC14}) with the kernel
\[
\mathcal{N}=\text{span}\left\{\sqrt{\mu},\,\sqrt{\mu}v_1,\,\sqrt{\mu}v_2,\,
\sqrt{\mu}v_3,\,\sqrt{\mu}|v|^2\right\}.
\]
From a rescaling argument (see Appendix \ref{appendix rescaling}), 
we can always assume that the initial condition $G$ satisfies
\begin{equation*}%\label{G in ortho N}
  G\in \mathcal{N}^\perp.
\end{equation*}
In \cite{ NYKC1}, For the radially symmetric case, the authors show that 
the linear Boltzmann operator behaves like the fractional harmonic oscillator 
$\mathcal{H}^s$ ($0 < s <1$) with
\[
  \mathcal{H}=-\Delta +\frac{|v|^2}{4}.
\]
We study in the next section the spectral properties of the operators 
$\mathcal{L}$ and ${\bf\Gamma}$.

\subsection{The spectral problem}
We introduce now an orthonormal basis of $L^2_r(\mathbb{R}^3)$
the radial symmetric functions of  $L^2(\mathbb{R}^3)$
involving the generalized Laguerre polynomials $L^{[\ell + \frac12]}_{n}$: 
for that, we set for any $n \geq 0$
\begin{equation}\label{phi=}
\varphi_{n}(v) = \left(\frac{n!}{\sqrt{2}\Gamma(n  + 3/2)}\right)^{1/2}
               e^{-\frac{|v|^{2}}{4}} L^{[\frac12]}_{n}\left(\frac{|v|^{2}}{2}\right) \, 
               \frac{1}{\sqrt{4\pi}} 
\end{equation}
where $\Gamma(\,\cdot\,)$ is the standard gamma function, for all $x>0$,
\[
\Gamma(x)=\int^{+\infty}_0t^{x-1}e^{-x}dx
\]
and the Laguerre polynomial $L^{(\alpha)}_{n}$~of order $\alpha$,~degree $n$ is
\begin{align*}
&L^{(\alpha)}_{n}(x)=\sum^{n}_{r=0}(-1)^{n-r}\frac{\Gamma(\alpha+n+1)}{r!(n-r)!
\Gamma(\alpha+n-r+1)}x^{n-r}.
\end{align*}
We have the spectral decomposition for the linear Boltzmann operator
\[
\mathcal{L}\, \varphi_{n} = \lambda_n\, \varphi_{n}
\quad\quad
n \geq 0, 
\]
with $\phi_0 = \sqrt{\mu}$, $\lambda_0 = 0$ and for $n \geq 1$ 
\begin{equation} \label{lambda=}
 \lambda_n = 2\, \int_0^{\frac{\pi}{4}} \beta(\theta) 
          \left(1-(\sin \theta)^{2n}-(\cos \theta)^{2n}\right) d\theta 
\end{equation}
where $\beta(\theta)$ is defined from the collision kernel (see \eqref{non-cutoff-intro})
\begin{equation}\label{non-cutoff}
  \beta(\theta) = \sin\theta \, b(\cos\theta) \approx 
           \frac{1}{|\theta|^{1+2s}}, \quad 0<s<1, \,\, 
           \theta\in \bigl(0,\frac{\pi}{2}\bigr].
\end{equation}
The two families $(\varphi_{n} (v))_{n\geq 0}$ 
and $(\lambda_n)_{n\geq 0}$ represent the eigenvectors and the eigenvalues of $\mathcal{L}$.
Remark that this diagonalization of the linearized Boltzmann operator 
with Maxwellian molecules is also verified in the cutoff case 
(see \cite{Bo, C, Dole, NYKC2, NYKC1}).

We consider the spectral expansion 
\begin{equation}\label{gn(t)= Gn=}
g(t,v) = \sum_{n=0}^{\infty} g_n(t) \, \varphi_{n}(v) , \quad
G(v) = \sum_{n=0}^{\infty} G_n \, \varphi_{n}(v) 
\end{equation}
where $g_n(t) = \Big(g(t,\cdot), \varphi_n(\cdot)\Big)_{L^2}$
and $G_n = \Big(G, \varphi_n\Big)_{L^2}$. 
By definition, we have
\begin{equation*}
  e^{-t \mathcal{L} } G(v) = \sum_{n=0}^\infty e^{-\lambda_n t} G_n \varphi_n (v).
\end{equation*}
It is the solution of the equation
\begin{equation*}
\left\{\begin{array}{ll}
  & \displaystyle \partial_t g^{\ell in} + \mathcal{L} \, g^{\ell in}
    = 0,\\
  & g^{\ell in}(0,v) = G(v). \end{array}
\right.
\end{equation*} 
Then the operator ${\bf {\Gamma}}$ satisfies
\begin{align*}
%\mathcal{L} \, \varphi_{n} &= \lambda_{n} \, \varphi_{n}
{\bf {\Gamma}}( \varphi_{p},  \varphi_{q}) &= \mu_{pq} \, \varphi_{p+q} 
\end{align*}
where the non-linear eigenvalues are given by 
\begin{equation} \label{mu=}
\mu_{pq} = 
    \left(
    \frac{(2p+2q+1)}{(2p+1)(2q+1)} \, C_{2p+2q}^{2p}  
    \right)^{\frac12} \, 
    \int_{|\theta|\leq \frac{\pi}{4}}
     \beta(\theta)\, (\sin\theta)^{2p}\, (\cos\theta)^{2q} d\theta
\end{equation}
for $p\geq 1, q\geq 0$ and 
\begin{equation*}
 \mu_{0q} = 
    - \int_{|\theta|\leq \frac{\pi}{4}} \beta(\theta)\, (1-(\cos\theta)^{2q}) d\theta
\end{equation*}
for $q\geq1$.
Following \cite{NYKC14}, we therefore derive from \eqref{eq Boltz g} the following 
infinite system of ordinary differential equations :
\begin{equation}\label{eq Boltz gn}
\left\{ \begin{aligned}
&g_0^{\prime}(t)=0,\,\quad g_1^{\prime}(t)=0,\,\\
&\text{for all}\,\, n\geq 2,\\
&g_n'(t) + \lambda_n \, g_n(t) 
  = \sum_{\substack{p+q=n \\ 0\leq p,\, q\leq n}} \mu_{pq}\, g_p(t)\, g_q(t)
\end{aligned} \right.
\end{equation}
with the initial conditions (see \eqref{gn(t)= Gn=}) 
\begin{equation*}%\label{cond init Gn}
g_n(0)= G_n \quad \text{for} \quad n\geq 0.
\end{equation*}
The goal is to study the behavior of each function $t\to\displaystyle g_n(t)$. 

In the rest, we will focus on the computation and properties of this intermediate solution.
\begin{proposition}\label{prop formel}
We assume that $G\in \mathcal{N}^{\perp}$.
Then the intermediate solution $h(t,v)$ defined by \eqref{eq Boltz h} satisfies
\begin{equation}\label{h=sum hn phin}
h(t,v) = \sum_{n=0}^{\infty} h_n (t) \, \varphi_n (v)
\end{equation}
where $h_0 \equiv  h_1 \equiv h_2 \equiv h_3 \equiv 0$ 
and for all $n\geq 4$
\begin{equation}\label{hn=int}
h_{n}(t) = \sum_{ \substack{p+q = n \\ 2\leq p, \, q\leq n-2}} \int_{0}^{t}
          \mu_{pq}\, e^{-(\lambda_p + \lambda_q - \lambda_n)s}\, 
          \bigl(G_p + h_p(s)\bigr) \, \bigl(G_q + h_q(s)\bigr)\, ds .
\end{equation}
\end{proposition}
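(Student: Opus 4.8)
The plan is to substitute the spectral ansatz \eqref{h=sum hn phin} into the evolution equation \eqref{eq Boltz h}, identify the coefficients $h_n(t)$, and then use the structure of the kernel $\mathcal{N}$ to kill the low modes. First I would expand everything in the basis $(\varphi_n)_{n\geq 0}$. Setting $w(t,v):=e^{-t\mathcal{L}}(G+h)(t,v)=\sum_{k\geq 0}e^{-\lambda_k t}\bigl(G_k+h_k(t)\bigr)\varphi_k(v)$ --- using the spectral definition of $e^{\alpha\mathcal{L}}$ and $G=\sum_k G_k\varphi_k$ --- the bilinearity of ${\bf \Gamma}$ together with ${\bf \Gamma}(\varphi_p,\varphi_q)=\mu_{pq}\varphi_{p+q}$ gives
\[
{\bf \Gamma}(w,w)=\sum_{n\geq 0}\Bigl(\sum_{p+q=n}\mu_{pq}\,e^{-(\lambda_p+\lambda_q)t}\bigl(G_p+h_p(t)\bigr)\bigl(G_q+h_q(t)\bigr)\Bigr)\varphi_n ,
\]
and applying $e^{t\mathcal{L}}$ term by term produces the prefactor $e^{\lambda_n t}$ in the $n$-th coefficient. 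Matching with $\partial_t h=\sum_n h_n'(t)\varphi_n$ and using $h(0,v)=0$, I obtain the triangular infinite system
\[
h_n'(t)=\sum_{p+q=n}\mu_{pq}\,e^{-(\lambda_p+\lambda_q-\lambda_n)t}\bigl(G_p+h_p(t)\bigr)\bigl(G_q+h_q(t)\bigr),\qquad h_n(0)=0 .
\]

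Next I would exploit $G\in\mathcal{N}^{\perp}$. In the radial class the kernel reduces to $\mathcal{N}\cap L^2_r=\mathrm{span}\{\sqrt{\mu},\sqrt{\mu}\,|v|^2\}=\mathrm{span}\{\varphi_0,\varphi_1\}$ --- exactly the modes with $\lambda_0=\lambda_1=0$ --- so $G_0=G_1=0$. The key observation is that each product in the right-hand side of the equation for $h_n'$ indexed by $(p,q)$ with $p\in\{0,1\}$ contains the factor $G_p+h_p$, and with $q\in\{0,1\}$ the factor $G_q+h_q$. Hence it suffices to prove $h_0\equiv h_1\equiv 0$: for $n=0$ the only pair is $(0,0)$ and the right-hand side is $\mu_{00}(G_0+h_0)^2$; since $G_0=0$ --- and in fact ${\bf \Gamma}(\sqrt{\mu},\sqrt{\mu})=\frac{1}{\sqrt{\mu}}{\bf Q}(\mu,\mu)=0$, i.e. $\mu_{00}=0$ --- uniqueness for $h_0'=\mu_{00}h_0^2$, $h_0(0)=0$ gives $h_0\equiv 0$, and then for $n=1$ both pairs $(0,1),(1,0)$ carry the vanishing factor $G_0+h_0$, so $h_1\equiv 0$.

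With $h_0\equiv h_1\equiv 0$, the cases $n=2$ (pairs $(0,2),(1,1),(2,0)$) and $n=3$ (pairs $(0,3),(1,2),(2,1),(3,0)$) follow at once, since every pair carries a factor $G_0+h_0=0$ or $G_1+h_1=0$; hence $h_2\equiv h_3\equiv 0$. Finally, for $n\geq 4$ the same cancellation removes every pair with $p\in\{0,1\}$ or $q\in\{0,1\}$ (equivalently $p\in\{n-1,n\}$), leaving precisely the pairs with $2\leq p,q\leq n-2$; integrating $h_n'$ from $0$ to $t$ with $h_n(0)=0$ gives formula \eqref{hn=int}. On this range $p,q<n$, so the system is genuinely triangular and the $h_n$, $n\geq 4$, are determined recursively.

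The one point that requires care is the legitimacy of the term-by-term manipulations: the $L^2(\mathbb{R}^3)$-convergence of \eqref{h=sum hn phin} and the commutation of the unbounded semigroups $e^{\pm t\mathcal{L}}$ and of ${\bf \Gamma}$ with the infinite sum. At the formal level this is exactly what is being asserted; a fully rigorous justification would use the a priori bounds on $\lambda_n$ and $\mu_{pq}$ coming from \eqref{lambda=} and \eqref{mu=}, the triangular structure noted above, and the well-posedness and smoothing theory for \eqref{eq Boltz h} as in \cite{NYKC14}.
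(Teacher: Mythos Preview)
Your proof is correct and follows essentially the same strategy as the paper: expand in the spectral basis, use $G_0=G_1=0$ from $G\in\mathcal{N}^\perp$ to kill the low modes, and integrate the resulting triangular ODE system. The only cosmetic difference is that the paper routes the computation through the coefficients $g_n(t)=e^{-\lambda_n t}(G_n+h_n(t))$ and the already-stated system \eqref{eq Boltz gn}, whereas you work directly from the $h$-equation \eqref{eq Boltz h}; these are equivalent, and your closing remark on the formal nature of the term-by-term manipulations is a fair caveat that the paper leaves implicit.
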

\begin{remark}
As we have seen before, we divide the function $g$  in two parts as follows:
\begin{equation}\label{g=gl+gnl}
g(t,v) = 
\underbrace{\sum_{n=0}^{\infty} e^{-\lambda_{n}\, t}\, G_{n}\, \varphi_{n}(v)}_{g^{\ell in}(t,v)} + 
\underbrace{\sum_{n=0}^{\infty} e^{-\lambda_{n}\, t}\, h_{n}(t)\, \varphi_{n}(v)}_{g^{n \ell}(t,v)},
\end{equation}
therefore the formal solution $f(t,v)$ can be written as
\begin{equation}\label{f=mu+mu2 G+h}
f(t,v) = \mu(v) + \sqrt{\mu(v)} \,
  \sum_{n=0}^\infty 
  \left( e^{-\lambda_n \, t} \, G_n + e^{-\lambda_n \, t} \, h_n(t) \right) 
  \, \varphi_n(v).
\end{equation}
\end{remark}
\begin{proof}[Proof of proposition \ref{prop formel}] :
As $G\in \mathcal{N}^{\perp}$, we get $G_0 = G_1 = 0$ and 
we can verify from \eqref{eq Boltz gn} that
\[ g_0(t) = g_1(t) = 0, \,\, g_2(t) = G_2\,e^{-\lambda_2\,t},  \,\, 
      g_3(t) = G_3\,e^{-\lambda_2\,t}\]
and therefore $h_0 \equiv  h_1 \equiv h_2 \equiv h_3 \equiv 0$. 
By \eqref{eq Boltz gn}, we may write
\begin{equation}\label{gn=Gn+hn}
g_n(t) = e^{-\lambda_n \, t}  G_n + e^{-\lambda_n \, t}  \, h_n(t)
\end{equation}
and
\begin{equation}\label{g_n'=sum qp qq}
g_n'(t) + \lambda_n \, g_n(t) 
= \sum_{\substack{p+q=n \\ 2\leq p,\, q\leq n-2}} \mu_{pq}\, g_p(t)\, g_q(t).
\end{equation}
We plug again the value of $g_n$ from \eqref{gn=Gn+hn} into the equation 
\eqref{g_n'=sum qp qq} and we get
\[ h_n'(t) = e^{\lambda_n \,t}\, 
  \sum_{\substack{p+q=n \\ 2\leq p,\, q\leq n-2}} \mu_{pq}\, g_p(t)\, g_q(t).\]
Note that $h_n(0) = g_n^{n\ell}(0)=0$.
Finally, plugging the expression of $g_p$ and $g_q$ from \eqref{gn=Gn+hn} 
into the previous equation and integrating we prove \eqref{hn=int}. 
Concerning the exact expression of the eigenvalue $\lambda_n$ and $\mu_{pq}$, 
see \cite{NYKC14}. This concludes the proof. 
\end{proof}
We introduce now the following notations. For a $k$-uplet $\alpha \in \mathbb{N}^k$,
\begin{align*}%\label{Not G^alpha}
\begin{split}
\Lambda_\alpha = \lambda_{\alpha_1} +\lambda_{\alpha_2} + \cdots + \lambda_{\alpha_k}, \\
G^\alpha = G_{\alpha_1} \times  G_{\alpha_2} \cdots \times  G_{\alpha_k}.
\end{split}
\end{align*}
\begin{proposition}\label{prop valeur h}
For each integer $n\geq4$, we define 
$I_n$ a set of admissible indices 
\begin{equation*}%\label{In}
I_n = \left\{ \alpha \in \mathbb{N}^k \,\Big|\, k\in \mathbb{N}^*,\,\,  
     \alpha_i\geq 2,\,\,  |\alpha|=n,\right\}.
\end{equation*}
Then for each  multi-index $\alpha, \beta, \in I_n$
there exists some real coefficients $c^\alpha_\beta$ 
which depends only on 
$\lambda_2,\ldots ,\lambda_n$ and
$\mu_{pq}$ for $2\leq p , q \leq n-2$, $p+q\leq n$
such that
\begin{equation}\label{hn = poly}
h_n(t) = 
  \sum_{\alpha,\beta\in I_n}
  c^\alpha_\beta \,\, G^\alpha  \, \left(1 - e^{-(\Lambda_\beta-\lambda_n)\,t} \right).
\end{equation}
\end{proposition}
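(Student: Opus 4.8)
The plan is to argue by induction on $n\geq 4$, using the integral recursion \eqref{hn=int} of Proposition \ref{prop formel} together with the stability of the index sets $I_n$ under concatenation. First I would record elementary facts about $I_\bullet$: writing $(p)$ for the one-entry tuple, we have $(p)\in I_p$ for $p\geq 2$, so $G_p=G^{(p)}$ and $\lambda_p=\Lambda_{(p)}$; and if $\alpha\in I_p$, $\alpha'\in I_q$, then the concatenation $\alpha\cdot\alpha'$ belongs to $I_{p+q}$, with $G^{\alpha\cdot\alpha'}=G^\alpha G^{\alpha'}$ and $\Lambda_{\alpha\cdot\alpha'}=\Lambda_\alpha+\Lambda_{\alpha'}$. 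Next, assuming \eqref{hn = poly} for all $4\leq m<n$ (and recalling $h_2\equiv h_3\equiv 0$), I observe that each $g_p$ with $2\leq p<n$ is a \emph{finite} linear combination
\[
g_p(s)=e^{-\lambda_p s}\bigl(G_p+h_p(s)\bigr)=\sum_{\alpha,\beta\in I_p} d^\alpha_\beta\, G^\alpha\, e^{-\Lambda_\beta s},
\]
with real coefficients $d^\alpha_\beta$ depending only on the $\lambda$'s and $\mu$'s: for $p=2,3$ this is the explicit $g_p(s)=G_p\,e^{-\lambda_p s}$, and for $p\geq 4$ it follows by substituting the inductive form of $h_p$ and using $e^{-\lambda_p s}=e^{-\Lambda_{(p)}s}$ with $(p)\in I_p$.

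For the inductive step, rewrite the integrand in \eqref{hn=int} as $\mu_{pq}\,e^{-(\lambda_p+\lambda_q-\lambda_n)s}(G_p+h_p)(G_q+h_q)=\mu_{pq}\,e^{\lambda_n s}\,g_p(s)\,g_q(s)$, so that
\[
h_n(t)=\int_0^t e^{\lambda_n s}\sum_{\substack{p+q=n\\ 2\leq p,q\leq n-2}}\mu_{pq}\,g_p(s)\,g_q(s)\,ds .
\]
Expanding each product $g_p(s)g_q(s)$ with the displayed form and the concatenation rules, the integrand becomes a finite sum $\sum_{\gamma,\delta\in I_n} e^\delta_\gamma\, G^\gamma\, e^{-(\Lambda_\delta-\lambda_n)s}$ in which every index $\delta$ occurring is a concatenation of two nonempty tuples, hence has length $\geq 2$. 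Integrating termwise yields
\[
h_n(t)=\sum_{\gamma,\delta\in I_n}\frac{e^\delta_\gamma}{\Lambda_\delta-\lambda_n}\,G^\gamma\,\bigl(1-e^{-(\Lambda_\delta-\lambda_n)t}\bigr),
\]
which is \eqref{hn = poly} with $c^\gamma_\delta=e^\delta_\gamma/(\Lambda_\delta-\lambda_n)$, and it also makes $h_n(0)=0$ manifest. Since level $n$ only introduces the factors $\mu_{pq}$ with $p+q=n$, $2\leq p,q\leq n-2$, the coefficients coming from lower levels, and the denominators $\Lambda_\delta-\lambda_n$ in which $\Lambda_\delta$ is a sum of $\lambda_{\delta_i}$ with $\delta_i\leq n-2$, the coefficient $c^\gamma_\delta$ depends only on $\lambda_2,\dots,\lambda_n$ and on the $\mu_{pq}$ with $2\leq p,q\leq n-2$, $p+q\leq n$, as claimed. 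The base case $n=4$ is the single instance $p=q=2$, $h_2\equiv 0$: here \eqref{hn=int} gives at once $h_4(t)=\bigl(\mu_{22}/(2\lambda_2-\lambda_4)\bigr)\,G_2^2\,\bigl(1-e^{-(2\lambda_2-\lambda_4)t}\bigr)$.

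The point that must be justified for the integration above to land in the asserted form — and the step I expect to be the real obstacle — is the non-resonance property $\Lambda_\delta\neq\lambda_n$ for every $\delta\in I_n$ of length $\geq 2$; without it, the $s$-integral of such a term would contribute a factor $t$ rather than $1-e^{-(\Lambda_\delta-\lambda_n)t}$. I would deduce it from strict super-additivity of the spectrum, $\lambda_a+\lambda_b>\lambda_{a+b}$ for $a,b\geq 1$, applied to the parts of $\delta$ to get $\Lambda_\delta>\lambda_{|\delta|}=\lambda_n$. Super-additivity itself follows from strict concavity of $n\mapsto\lambda_n$ together with $\lambda_0=0$: by \eqref{lambda=}, $\lambda_n=2\int_0^{\pi/4}\beta(\theta)\bigl(1-(\sin^2\theta)^n-(\cos^2\theta)^n\bigr)\,d\theta$, and for each fixed $\theta\in(0,\tfrac{\pi}{4})$ the map $n\mapsto (\sin^2\theta)^n+(\cos^2\theta)^n$ is a sum of two exponentials in $n$, hence convex and, since $\sin^2\theta\neq\cos^2\theta$, strictly convex; as $\beta>0$, integrating gives strict concavity of the sequence $(\lambda_n)_{n\geq 0}$, and a concave sequence vanishing at $0$ is super-additive (strictly, here). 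This non-resonance fact could equally be placed among the technical results of the appendix.
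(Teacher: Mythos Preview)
Your proof is correct and follows essentially the same inductive strategy as the paper: assume the form \eqref{hn = poly} for lower indices, plug into the integral recursion \eqref{hn=int}, expand the product, and integrate each exponential term. Your packaging via $g_p(s)=\sum_{\alpha,\beta\in I_p} d^\alpha_\beta\,G^\alpha e^{-\Lambda_\beta s}$ is a clean way to avoid the paper's explicit four-way split $A+B+C+D$ of $(G_p+h_p)(G_q+h_q)$, but the content is identical. The one substantive difference is your treatment of the non-resonance inequality $\Lambda_\delta>\lambda_n$: the paper isolates this as a separate lemma and derives it by iterating the two-term inequality $\lambda_{a+b}<\lambda_a+\lambda_b$ quoted from \cite{NYKC14}, whereas you give a self-contained argument via strict concavity of $n\mapsto\lambda_n$ (convexity of $n\mapsto x^n$ for $x\in(0,1)$ under the integral sign) together with $\lambda_0=0$. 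Both routes are valid; yours has the advantage of not relying on an external reference.
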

\begin{proof}
We compute directly from \eqref{hn=int}
\[ 
h_4(t) = c_{(2,2)}^{(2,2)} \, {G_2}^2 \, \left(1 - e^{-(\Lambda_{(2,2)}-\lambda_4)\,t} \right)\]
where
\[c_{(2,2)}^{(2,2)} = \frac{\mu_{22}}{(\Lambda_{(2,2)}-\lambda_4)} \]
and
\[ 
h_5(t) = 
  c_{(2,3)}^{(2,3)} \, G_2 G_3 \left(1 - e^{-(\Lambda_{(2,3)}-\lambda_5)\,t} \right)
+ c_{(3,2)}^{(3,2)} \, G_3 G_2 \left(1 - e^{-(\Lambda_{(3,2)}-\lambda_5)\,t} \right) \]
where
\[ 
c_{(2,3)}^{(2,3)} = \frac{\mu_{23}}{(\Lambda_{(2,3)}-\lambda_5)} \quad \text{and}\quad
c_{(3,2)}^{(3,2)} =\frac{\mu_{32}}{(\Lambda_{(3,2)}-\lambda_5)} .
 \]
We prove the result by induction.
Then we can suppose that \eqref{hn = poly} is true for each $h_{n'}$ ($4\leq n'\leq n-1$).
We will use the integral expression \eqref{hn=int} of $h_n$. 
We consider two integers $p, q$ such that $2 \leq p\, , q \leq n-2$ and $p+q=n$. 
Then from \eqref{hn = poly}
\begin{align*}
h_p(t) &= 
  \sum_{\alpha,\beta\in I_p}
  c^\alpha_\beta \,\, G^\alpha  \, \left(1 - e^{-(\Lambda_\beta-\lambda_p)\,t} \right), 
  \\
h_q(t) &= 
  \sum_{\alpha',\beta'\in I_q}
  c^{\alpha'}_{\beta'} \,\, G^{\alpha'}  \, \left(1 - e^{-(\Lambda_{\beta'}-\lambda_q)\,t} \right).
\end{align*}
From the integral formula \eqref{hn=int}
we get
\[ h_n(t) = \int_0^t \sum_{\substack{ p+q = n \\ 2\leq p, \, q\leq n-2}} (A+B+C+D) \, ds \]
with
\begin{align*}
& A = \mu_{pq} \, G_p\, G_q \, e^{-(\lambda_p+\lambda_q-\lambda_n)\,s},  \, 
\\
& B = \sum_{\alpha',\beta'\in I_q}
   \mu_{pq} \, c^{\alpha'}_{\beta'} \,\, G_p\,G^{\alpha'}  \, 
   (e^{-(\lambda_p+\lambda_q-\lambda_n)\,s} - e^{-(\lambda_p+\Lambda_{\beta'}  - \lambda_n)\,s}) , 
\\  
& C =   \sum_{\alpha,\beta\in I_p}
   \mu_{pq} \, c^{\alpha}_{\beta} \,\,G^{\alpha} \, G_q \, 
   (e^{-(\lambda_p+\lambda_q-\lambda_n)\,s} - e^{-(\Lambda_{\beta} + \lambda_q  - \lambda_n)\,s}) , 
\\ 
& D = 
\sum_{\alpha,\beta\in I_p} \,\,
\sum_{\alpha',\beta'\in I_q}
     \mu_{pq} \,  \, c^{\alpha}_{\beta} \, \, c^{\alpha'}_{\beta'} \, \, G^{\alpha} \, G^{\alpha'} 
     \quad  
\times
      \\
&  (e^{-(\lambda_p+\lambda_q-\lambda_n)\,s} - e^{-(\Lambda_\beta + \lambda_q - \lambda_n)\,s} - 
      e^{-(\lambda_p + \Lambda_{\beta'} - \lambda_n)\,s}+ e^{-(\Lambda_{\beta}+\Lambda_{\beta'} - \lambda_n)\,s}). 
\end{align*}
Expanding each previous terms and integrating over $[0,t]$, 
we get the result \eqref{hn = poly} since each number
$\lambda_p+\lambda_q-\lambda_n$, 
$\Lambda_\beta + \lambda_q - \lambda_n$,
$\lambda_p + \Lambda_{\beta'} - \lambda_n$,
$\Lambda_{\beta} +\Lambda_{\beta'} - \lambda_n$
are positive from the next lemma and 
$|\alpha|=|\beta|=p$, $|\alpha'|=|\beta'|=q$ and $p+q=n$.
\end{proof}
\begin{lemma}
The linear eigenvalues $\lambda_n$ for the non-cutoff radially symmetric 
spatially homogeneous Boltzmann equation
\[
\lambda_n = \int_{|\theta|\le \frac{\pi}{4}}\,\,\beta(\theta)\, 
        \Big(1 - (\sin\theta)^{2n}\, -(\cos\theta)^{2n}\, \Big)d\theta, 
\quad\quad n\geq 2,
\]
verify the following property
\[
\lambda_{\alpha_1 + \cdots + \alpha_k} <
\lambda_{\alpha_1} +  \cdots + \lambda_{\alpha_k}
(=\Lambda_\alpha)
\]
for multi-index $\alpha\in (\mathbb{N}\setminus\{0,1\})^k$.
\end{lemma}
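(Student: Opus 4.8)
The plan is to reduce the claim to the two–index case and then to prove $\lambda_{m+n}<\lambda_m+\lambda_n$ for all $m,n\ge 2$ by an elementary pointwise inequality on the integrand defining $\lambda_n$.

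\emph{Reduction to $k=2$.} I would argue by induction on $k\ge 2$. Granting $\lambda_{m+n}<\lambda_m+\lambda_n$ whenever $m,n\ge 2$, take $\alpha\in(\mathbb{N}\setminus\{0,1\})^k$ with $k\ge 3$ and set $m=\alpha_1\ge 2$ and $n=\alpha_2+\cdots+\alpha_k\ge 2(k-1)\ge 2$. The two-index case gives $\lambda_{|\alpha|}=\lambda_{m+n}<\lambda_m+\lambda_n$, and the inductive hypothesis applied to $(\alpha_2,\dots,\alpha_k)\in(\mathbb{N}\setminus\{0,1\})^{k-1}$ gives $\lambda_n<\lambda_{\alpha_2}+\cdots+\lambda_{\alpha_k}$; together these yield $\lambda_{|\alpha|}<\Lambda_\alpha$. (For $k=1$ the assertion is empty, so one implicitly has $k\ge 2$, consistently with the requirement $\alpha_i\ge 2$.)

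\emph{The two-index estimate.} Fix $m,n\ge 2$ and put $u=\sin^2\theta$, $v=\cos^2\theta$, so that $u+v=1$ and $u\in[0,1]$. Using the formula for $\lambda_n$ in the statement, I would write
\[
\lambda_m+\lambda_n-\lambda_{m+n}=\int_{|\theta|\le\frac{\pi}{4}}\beta(\theta)\,E(u)\,d\theta,\qquad E(u):=1-u^m-v^m-u^n-v^n+u^{m+n}+v^{m+n},
\]
the splitting of the integral being legitimate since $E(u)=O(\theta^{4})$ as $\theta\to 0$ while $\beta(\theta)=O(\theta^{-1-2s})$ by \eqref{non-cutoff}. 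The key identity, and the crux of the argument, is the factorisation
\[
E(u)=(1-u^m)(1-u^n)+(1-v^m)(1-v^n)-1 .
\]
Since $1-t^{\ell}=(1-t)(1+t+\cdots+t^{\ell-1})\ge(1-t)(1+t)=1-t^2$ for every integer $\ell\ge 2$ and every $t\in[0,1]$, and since $1-u=v$ and $1-v=u$, one obtains
\[
E(u)\ge v^2(1+u)^2+u^2(1+v)^2-1=2u^2v^2=2\sin^4\theta\cos^4\theta ,
\]
the middle equality being a one-line computation using $u+v=1$. In particular $E(u)>0$ for every $\theta\in(0,\frac{\pi}{4}]$, and since $\beta>0$ there, the displayed integral is strictly positive; that is, $\lambda_{m+n}<\lambda_m+\lambda_n$.

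\emph{The main obstacle.} Everything is routine except controlling the sign of $E(u)$: the naive estimate $1-t^{\ell}\ge 1-t$ only gives $E(u)\ge u^2+v^2-1=-2uv\le 0$, so one genuinely needs the quadratic improvement $1-t^{\ell}\ge 1-t^2$ for $\ell\ge 2$ together with the two-term factorisation above. This is also exactly where the hypothesis $\alpha_i\ge 2$ enters: for $\ell=1$ the inequality $1-t\ge 1-t^2$ is false, in accordance with $\lambda_1=0$, for which $\lambda_{1+n}<\lambda_1+\lambda_n$ fails.
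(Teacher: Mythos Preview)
Your reduction to $k=2$ by induction is exactly the paper's argument. The difference is in the two-index step: the paper simply invokes \cite{NYKC14} for $\lambda_{m+n}<\lambda_m+\lambda_n$, whereas you give a self-contained elementary proof via the factorisation
\[
E=(1-u^m)(1-u^n)+(1-v^m)(1-v^n)-1
\]
together with $1-t^\ell\ge 1-t^2$ for $\ell\ge 2$, leading to the clean lower bound $E\ge 2u^2v^2$. I checked your algebra: with $u+v=1$ one indeed gets $v^2(1+u)^2+u^2(1+v)^2=(u+v)^2+2u^2v^2=1+2u^2v^2$, so the bound is correct, and strict positivity of the integral follows. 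Your remark that the integrals may be combined is fine (each $\lambda_n$ is finite under \eqref{non-cutoff}), and the $O(\theta^4)$ observation, while not strictly needed, is accurate. Your approach buys a fully self-contained argument and makes transparent why the hypothesis $\alpha_i\ge 2$ is sharp; the paper's approach is shorter but relies on an external reference.
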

\begin{proof}
By \cite{NYKC14}, we may write
\[
\lambda_{\alpha_1 + \alpha_2} < \lambda_{\alpha_1} + \lambda_{\alpha_2} , 
\]
then by iteration, we have

$
\lambda_{(\alpha_1 + \cdots + \alpha_k) + \alpha_{k+1}} < 
\lambda_{\alpha_1 + \cdots + \alpha_k} + \lambda_{\alpha_{k+1}} 
< (\lambda_{\alpha_1} + \cdots + \lambda_{\alpha_k})+ \lambda_{\alpha_{k+1}}.$
\end{proof}
% 
%====================================================================
% SECTION 3  NUMERICAL 
%====================================================================

%-------------------------------------------------------------------------------------------------------
\section{Numerical computations}

From now on, for sake of simplicity, we consider the specific case $s=\frac12$ and 
$$\beta(\theta) = (\sin \theta)^{-2}.$$
For the general case  $s\in]0,1[$ and other kernel $\beta$ which satisfies \eqref{non-cutoff}, 
we can compute some numerical approximations of the eigenvalues. 
We think that the results do not change. 

\subsection{Computation of the eigenvalues } \label{comp lambda}
By the following assumption
$\beta(\theta) \underset{0}{\approx} \frac{1}{|\theta|^{2}}$, 
we obtain (see \cite{NYKC1})
\begin{equation} \label{lambda approx}
\lambda_n \underset{\infty}{\approx} \sqrt{n} 
\end{equation}
where the linear eigenvalues $\lambda_n$ of $\mathcal{L}$ 
was defined in \eqref{lambda=}.
We recall the value of $\lambda_n$ for $n\geq 2$:
\begin{align*}
& \lambda_n = 2\, \int_0^{\frac{\pi}{4}} \beta(\theta) 
  \left(1-(\sin \theta)^{2n}-(\cos \theta)^{2n}\right) d\theta. 
\end{align*}  
We compute the exact and approximate values of $\lambda_n$ by the following algorithm : 
\smallskip
\textit{
\null\\
\null\hskip1cm$\lambda_0  \leftarrow 0$
\\
\null\hskip1cm for $n$  from 1 to N  do\\ 
\smallskip
\null\hskip2cm   
  $\text{expr} \leftarrow \text{algebraic simplification of } \,\,
   \frac{1-\sin^{2n}\theta-\cos^{2n}\theta}{\sin^2\theta}$
 \\\smallskip
\null\hskip2cm  $\lambda^{\text{exact}}_n $
  $\displaystyle \leftarrow
   \text{symbolic computation of }\,\, 
   2 \int_0^{\frac{\pi}{4}} \text{expr} \, d\theta$\\\smallskip
\null\hskip2cm  
  $\lambda^{\text{approx}}_n
    \leftarrow \text{numerical computation of}  \,\,\,  \lambda^{\text{exact}}_n$
}

\smallskip
\noindent
The ``algebraic simplification'' of ``expr'' 
removes the singularity when $\theta \to 0$ 
coming from the collision kernel $\beta(\theta) = \sin^{-2}\theta$ (see \eqref{non-cutoff}).
It consists in a factorization of trigonometric polynomials.
The symbolic computation of $\lambda^{\text{exact}}_n$ is reduced to compute 
the exact integral of a trigonometric polynomial.
Then $\lambda^{\text{exact}}_n$ is approached numerically with a number of significant digits
(equal to 10 in \ref{tabl lambda}).
The ap\-pro\-ximation is easily controlled by the estimate of the relative error
$|\lambda^{\text{exact}}_n - \lambda^{\text{appr.}}_n| / \lambda^{\text{exact}}_n$.
Using the software Maple$^{\mbox{\scriptsize{\textregistered}}}$13, 
we finally get the numerical table \ref{tabl lambda}. 

\begin{table}[h]
\begin{tabular}{|l|c|c|c|}
  \hline
   & Exact value & Approximate value 
   & Relative error  \\
  \hline
  \rule[-1.5ex]{0pt}{4ex} $\lambda_{1}$ & 0 & 0 & -- \\
  \hline 
  \rule[-1.5ex]{0pt}{4ex} $\lambda_{2}$ & ${1+ \frac12\,\pi}$ & 2.570796327 
        & $8.0\times10^{-11}$ \\
  \hline
  \rule[-1.5ex]{0pt}{4ex} $\lambda_{3}$ & $\frac32 + \frac34 \,\pi$ & 3.856194490 
       & $5.0\times10^{-11}$  \\
  \hline
  \rule[-1.5ex]{0pt}{4ex} $\lambda_{4}$ & ${\frac {23}{12}}+{\frac {15}{16}}\,\pi$ 
       & 4.861909780 & $1.2\times10^{-10}$  \\
  \hline
  \rule[-1.5ex]{0pt}{4ex} $\lambda_{5}$ & ${\frac {55}{24}}+{\frac {35}{32}}\,\pi$ 
       & 5.727783632 & $8.2\times10^{-11}$  \\
  \hline
  \rule[-1.5ex]{0pt}{4ex} $\lambda_{10}$ & ${\frac {61717}{16128}}+{\frac {109395}{65536}}\,\pi$ & 9.070756042 & $9.0\times10^{-11}$  \\
  \hline
  \rule[-1.5ex]{0pt}{4ex} $\lambda_{15}$ & ${\frac {41349267}{8200192}}+{\frac {35102025}{16777216}}\,\pi$ 
       & 11.61545300 & $3.2\times10^{-10}$  \\
  \hline
  \rule[-1.5ex]{0pt}{4ex} $\lambda_{20}$ & ${\frac {60225247403}{9906683904}}+{\frac {83945001525}{34359738368}}\,\pi$ 
       & 13.75454524 & $2.5\times10^{-11}$  \\
  \hline
\end{tabular}
\vskip8pt
\caption{Symbolic and numerical computation of $\lambda_n$.} \label{tabl lambda}
\end{table}
The approximation of eigenvalues can be controlled to be sufficiently precise 
for upcoming computations. 
For a general kernel $\beta(\theta)$, there is in general no more explicit values. 
But some classical numerical me\-thods can be easily applied.
Nevertheless, there is no more any algebraic simplification, 
and it is necessary to treat carefully the singularity.  
 
\subsection{Computation of the nonlinear eigenvalues}
 
We recall the coefficients $\mu_{pq}$ from \eqref{mu=}: 
for some $p,q\geq 1$
\begin{align*}
& \mu_{pq} = 
    \sqrt{\frac{(2p+2q+1)}{(2p+1)(2q+1)} \, C_{2p+2q}^{2p}} \, 
    \int_{|\theta|\leq \frac{\pi}{4}} \beta(\theta)\, (\sin\theta)^{2p}\, (\cos\theta)^{2q} d\theta. 
\end{align*}
We compute the exact and the approximate value of $\mu_{pq}$ 
(again with a relative error $\approx 10^{-10}$)
for $1\leq p+q \leq N$ by the following algorithm : 
\smallskip
\textit{
\null\\
\null\hskip0.66cm for $p$  from  1 to  N  do\\ 
\null\hskip1.32cm for  $q$   from  0   to  N-p   do\\ 
\smallskip
\null\hskip2cm  $\text{expr}$ 
  $\leftarrow  \displaystyle 
   \text{symbolic computation of }\,\, 
   2 \int_0^{\frac{\pi}{4}} \sin^{2p-2}\theta\,\cos^{2q}\theta  \, d\theta$\\\smallskip
\null\hskip2cm  $\mu^{\text{exact}}_{pq}$ 
  $\leftarrow 
     \sqrt{\frac{(2p+2q+1)}{(2p+1)(2q+1)} \, C_{2p+2q}^{2p}} \,\,\times \,\, 
     \text{expr}$
  \\\smallskip
\null\hskip2cm  
  $\mu^{\text{approx}}_{pq}
    \leftarrow \text{numerical computation of}  \,\,
      \mu^{\text{exact}}_{pq}$ 
}

\smallskip\noindent
We present in the table \ref{tabl mu} of results for $p+q=n=2,\ldots,5, 20$.
\begin{table}[h]
\begin{tabular}{|c|c|c|c|c|}
  \hline
  \rule[-1.5ex]{0pt}{4ex}  $n=2$ & $n=3$ & $n=4$ & $n=5$  &$n=20$\\
  \hline
  \rule[-1.5ex]{0pt}{4ex}   $\mu_{1,1} \approx 2.35$ & $\mu_{1,2} \approx 2.88$ 
  & $\mu_{1,3} \approx 3.29$ & $\mu_{1,4} \approx 3.62$
  &  $\mu_{1,19} \approx 6.68$ \\
  \hline
  \rule[-1.5ex]{0pt}{4ex}  & $\mu_{2,1} \approx 0.519$ & $\mu_{2,2} \approx 0.702$ & 
    $\mu_{2,3} \approx 0.84$ 
    &  $\mu_{2,18} \approx 1.55$ \\
  \hline
  \rule[-1.5ex]{0pt}{4ex}   &  & $\mu_{3,1} \approx 0.196$ & $\mu_{3,2} \approx 0.30$  
  &  $\mu_{3,17} \approx 0.75$ \\
  \hline
  \rule[-1.5ex]{0pt}{4ex}   & &  &  $\mu_{4,1} \approx 0.084$ 
  &  $\mu_{4,16} \approx 0.46$ \\
  \hline
  \rule[-1.5ex]{0pt}{4ex}   & &  &   
  &  $\vdots$ \\
  \hline  
  \rule[-1.5ex]{0pt}{4ex}   & &  &   
  &  $\mu_{19,1} \approx  10^{-5}$ \\  
  \hline  
\end{tabular}  
\vskip8pt
\caption{Numerical computation of $\mu_{pq}$.} %the nonlinear eigenvalues.}
\label{tabl mu}
\end{table}
The singularity coming from the collision kernel $\beta(\theta)=\sin^{-2}\theta$ 
is removed by a simple simplification (remark the exponent $(2p-2)$ of the 
sinus term of $\mu_{pq}$). 
Again for a general collision kernel, the values of these nonlinear eigenvalues 
can be approximated by classical numerical methods.

\subsection{Numerical solutions of the linear problem}

We introduce from \eqref{g=gl+gnl} the approximation of the linear solution  
\begin{equation}\label{gln N=}
g^{\ell in}_N(t,v) = \sum_{n=0}^{N} e^{-\lambda_n \, t} \, G_n  \, \varphi_n (v) 
\end{equation}
where the reals $G_n$ are the given initial spectral coefficients.
In order to compute the value of the linear solution, 
we use the formula \eqref{phi=} of the eigenfunction $\varphi_n$ 
which involves the generalized Laguerre polynomials $L^{[\ell + \frac12]}_{n}$.
We get the following algorithm : 
\smallskip
\textit{
\null\\
\null\hskip1cm for $n$  from 0 to N  do
\\
\null\hskip2cm  $\varphi_n(v) \leftarrow 
 \left(\frac{n!}{\sqrt{2}\Gamma(n  + 3/2)}\right)^{1/2}e^{-\frac{|v|^{2}}{4}}
L^{[\frac12]}_{n}\left(\frac{|v|^{2}}{2}\right)  \, \frac{1}{\sqrt{4\pi}}$
}

\smallskip
\noindent
Finally, we obtain the linear solution by the sum : 
\smallskip
\textit{
\null\\
\null\hskip1cm$g^{\ell in}_N(t,v)  \leftarrow 0$ 
\\
\null\hskip1cm for $n$ from 2 to N do\\
\null\hskip2cm  $g^{\ell in}_N(t,v) 
\leftarrow  g^{\ell in}_{N} (t,v) 
  + e^{-\lambda_n t} \, G_n \, \varphi_n(v) $ 
}

\smallskip
\noindent
We estimate the $L^2$ theoretical error $(g^{\ell in} -  g^{\ell in}_N)$
for the different initial data $G$ used for computation in the next sections. 

%------------  Proposition Estim erreur partie lineaire 
\begin{proposition}
We consider the solution of the following linear problem 
\begin{equation}\label{eq gnl}
\left\{\begin{array}{ll}
  & \displaystyle \partial_t g^{\ell in} + \mathcal{L} \, g^{\ell in}
    = 0,\\
  & g^{\ell in}(0,v) = G(v). \end{array}
\right.
\end{equation}
We have the following estimates :  \\
1) For initial data $G \in L^2$,
\[
\|g^{\ell in}(t,\cdot)  - g^{\ell in}_N (t,\cdot)  \|_{L^2}
\lesssim e^{-c \, \sqrt{N}\, t}\, \| G \|_{L^2}. 
\]
2) For the measure initial data $G$ defined by \eqref{G measure}
(see also proposition \ref{prop F=mu+d}), 
there exist some constants $C>0$ and $c>0$ such that  
for $t>0$ %we have the following estimate:
\[
\|g^{\ell in}(t,\cdot)  - g^{\ell in}_N (t,\cdot)  \|_{L^2}  
\lesssim \frac{1}{t^b}\, e^{-\gamma\,\sqrt{N}\,t} .\] 
\end{proposition}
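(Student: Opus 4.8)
The plan is to reduce both estimates to a direct computation of the tail of the spectral series, using the explicit formula $g^{\ell in}(t,v) = \sum_{n=0}^\infty e^{-\lambda_n t}G_n\varphi_n(v)$ and the fact that $(\varphi_n)_{n\geq 0}$ is an orthonormal basis of $L^2_r(\mathbb{R}^3)$. Since $g^{\ell in}_N$ is the truncation at order $N$, Parseval's identity gives
\begin{equation*}
\|g^{\ell in}(t,\cdot) - g^{\ell in}_N(t,\cdot)\|_{L^2}^2 = \sum_{n=N+1}^\infty e^{-2\lambda_n t}\,|G_n|^2 .
\end{equation*}
Recall from Proposition \ref{prop formel} (and the fact that $G\in\mathcal{N}^\perp$) that $G_0=G_1=0$, so the sum effectively starts at $n\geq 2$; this is harmless. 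The monotonicity property $\lambda_{n+1}>\lambda_n$ for $n\geq 2$ (a consequence of the superadditivity lemma above, applied with $\alpha=(n,2)$ together with $\lambda_2>0$) shows that for $n\geq N+1$ we have $\lambda_n\geq\lambda_{N+1}$, hence $e^{-2\lambda_n t}\leq e^{-2\lambda_{N+1}t}$.

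For part (1), I would simply bound $e^{-2\lambda_n t}\leq e^{-2\lambda_{N+1}t}$ uniformly in the tail and pull it out of the sum:
\begin{equation*}
\|g^{\ell in}(t,\cdot) - g^{\ell in}_N(t,\cdot)\|_{L^2}^2 \leq e^{-2\lambda_{N+1}t}\sum_{n=N+1}^\infty |G_n|^2 \leq e^{-2\lambda_{N+1}t}\,\|G\|_{L^2}^2 .
\end{equation*}
Then invoke the asymptotics $\lambda_n\approx\sqrt{n}$ from \eqref{lambda approx}: there is a constant $c>0$ with $\lambda_{N+1}\geq c\sqrt{N+1}\geq c\sqrt{N}$ for all $N$, which yields $\|g^{\ell in}(t,\cdot)-g^{\ell in}_N(t,\cdot)\|_{L^2}\lesssim e^{-c\sqrt{N}\,t}\|G\|_{L^2}$ after taking square roots (absorbing the factor $2$ into $c$). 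For part (2), the initial data $G$ is a measure rather than an $L^2$ function, so $\sum|G_n|^2$ diverges and the crude bound fails; instead one must exploit the time decay $e^{-2\lambda_n t}$ against the growth of the coefficients $G_n$. The key input is an estimate on the spectral coefficients of the measure $G$ defined by \eqref{G measure}; I expect $|G_n|$ to grow at most polynomially in $n$ (this is where the precise structure of the measure, e.g.\ a Dirac mass or a sum of Diracs, enters — the Laguerre functions $\varphi_n$ are bounded polynomially in $n$ at a fixed point). Granting $|G_n|\leq C n^{a}$ for some $a\geq 0$, the tail becomes
\begin{equation*}
\sum_{n=N+1}^\infty e^{-2\lambda_n t}\,|G_n|^2 \leq C^2\sum_{n=N+1}^\infty n^{2a}\,e^{-2c\sqrt{n}\,t},
\end{equation*}
and this sum is estimated by comparison with the integral $\int_N^\infty x^{2a}e^{-2c\sqrt{x}\,t}\,dx$; a change of variables $x=u^2$ and integration by parts (or a direct Laplace-type bound) gives something of the form $\frac{1}{t^{2b}}e^{-2\gamma\sqrt{N}\,t}$ for suitable $b>0$ depending on $a$ and $\gamma<c$. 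Taking square roots gives the stated bound.

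The main obstacle is part (2), specifically obtaining the polynomial bound on $|G_n|$ for the measure initial data and then carrying out the tail-versus-integral comparison cleanly enough to extract the exact form $\frac{1}{t^b}e^{-\gamma\sqrt{N}\,t}$. The subtlety is that the factor $1/t^b$ must come out of the sum uniformly for all $t>0$ (including small $t$), so one cannot simply bound $e^{-2c\sqrt{n}t}\leq e^{-2c\sqrt{N}t}$; one has to keep enough of the exponential decay in $n$ to make the remaining series converge and produce the $t$-dependent prefactor. This is handled by splitting the exponent as $2c\sqrt{n}\,t = (2c-2\gamma)\sqrt{n}\,t + 2\gamma\sqrt{n}\,t$, using the first piece (together with $\sqrt{n}\geq\sqrt{N}$) to bound $e^{-(2c-2\gamma)\sqrt{N}t}$ outside, and using the second piece $e^{-2\gamma\sqrt{n}\,t}$ against $n^{2a}$ inside the sum; the inner sum $\sum_n n^{2a}e^{-2\gamma\sqrt{n}\,t}$ is then $O(t^{-2b})$ with $2b = 4a+2$ by the integral comparison. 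Everything else — Parseval, monotonicity of $\lambda_n$, the asymptotics $\lambda_n\approx\sqrt{n}$ — is routine and already available from the material above.
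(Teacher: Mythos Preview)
Your proposal is correct and follows essentially the same route as the paper: Parseval on the tail, the asymptotics $\lambda_n\approx\sqrt n$ for part (1), and for part (2) the polynomial bound on $|G_n|$ (the paper records $G_n\approx n^{1/4}$ from Proposition~\ref{prop F=mu+d}) followed by a sum/integral comparison. The only cosmetic difference is in part (2): the paper simply invokes the Cauchy integral criterion on $\rho_t(x)=x^{2a}e^{-c\sqrt{x}\,t}$ to obtain $\frac{C}{t^b}e^{-\gamma\sqrt N\,t}$, whereas you make the extraction of the exponential factor explicit by splitting $2c\sqrt n\,t=(2c-2\gamma)\sqrt n\,t+2\gamma\sqrt n\,t$; your version is a cleaner justification of the same estimate.
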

\begin{proof}
The solution of \eqref{eq gnl} is 
\[ 
g^{\ell in}(t,v) = \sum_{n=0}^{\infty}  e^{-\lambda_n \, t} \, G_n  \, \varphi_n (v). 
\]
The exact error in $L^2$ is
\[
\|g^{\ell in}(t,\cdot)  - g^{\ell in}_N (t,\cdot)  \|_{L^2}^2 
= \sum_{n=N+1}^{\infty} e^{-2\lambda_n \, t} \, |G_n|^2 .
\]
1) If $G\in L^2(\mathbb{R}_{v}^{3})$, then as we have from \eqref{lambda approx}
\[
\|g^{\ell in}(t,\cdot)  - g^{\ell in}_N (t,\cdot)  \|_{L^2}^2 
= \sum_{n=N+1}^{\infty} e^{-2\lambda_n \, t} \, |G_n|^2 
\lesssim e^{-2\,c \, \sqrt{N}\, t}\, \| G \|_{L^2}^2.\]
We can deduce that
the exact error tends to zero when $N$ tends to infinity.

2) We suppose now that F is the measure initial data $\mu+\delta$. 
We can approximate the spectral coefficients $G_n$ of G by $n^{\frac14}$ 
and by \ref{comp lambda} we can then find some positive constants $c$ and $C$ such that
\[ \|g^{\ell in}(t,\cdot)  - g^{\ell in}_N (t,\cdot)  \|_{L^2}^2 
\leq C \, \sum_{n=N+1}^{\infty} e^{-c \, \sqrt{n}  \, t} \, n^2 .\]
We consider the function $\rho_t$ defined on $\mathbb{R}_{+}$ by 
$\rho_t(x) = e^{-c \, \sqrt{x}  \, t} \, x^2$.
So that $\rho_t$ is positive, continuous and decreasing for $x\geq 16/(c\,t)^2$, 
therefore by using the Cauchy integral criterion, we can write the following inequality~:
\[ \|g^{\ell in}(t,\cdot)  - g^{\ell in}_N (t,\cdot)  \|_{L^2} 
\leq 
\frac{C}{t^b}\, e^{-\gamma\,\sqrt{N}\,t}\underset{N\rightarrow \infty}{\longrightarrow 0} \]
where $b$ and $\gamma$ are some positive constants.  
\end{proof}

\subsection{Numerical solutions of the non-linear part}\label{meth.int}

Concerning the nonlinear part 
$g^{n\ell} = e^{-t \mathcal{L}} h$ of the solution, 
we consider the partial series 
\begin{equation}\label{gnl N=}
g^{n\ell}_N(t,v) = \sum_{n=0}^{N} e^{-\lambda_n \, t} \, h_n(t)  \, \varphi_n (v).
\end{equation}
We then use the decomposition of $h$ 
in the spectral basis \eqref{h=sum hn phin} 
and the integral formula \eqref{hn=int} to compute $h_n(t)$.  
Therefore we solve explicitly the system \eqref{eq Boltz gn} by the following algorithm:
\smallskip
\textit{
\null\\
\null\hskip1cm $h_0(t),\,\, h_1(t),\,\,  h_2(t),\,\,  h_3(t) \,\,  \leftarrow  \,\, 0$
\\
\null\hskip1cm for $n$  from 4 to N  do\\\smallskip
\null\hskip2cm  $S \leftarrow 0$\\\smallskip
\null\hskip2cm   for $p$ from 4 to n do\\\smallskip
\null\hskip3cm  $q \leftarrow n-p$\\\smallskip
\null\hskip3cm  $S \leftarrow S + 
  \mu_{pq} \, (G_p+h_p(t)) \, (G_q+h_q(t)) \, 
   e^{-(\lambda_p+\lambda_q-\lambda_n) \, t}$\\\smallskip
\null\hskip2cm  $h_n(t) \leftarrow 
  \text{symbolic computation of }\,\,
   \displaystyle  \int_0^t S $ 
}

\smallskip
\noindent
The exact computation of the integral $\int_0^t S $ 
is straightforward since, from proposition \ref{prop valeur h},
the symbolic expression $S$
is an linear combination of exponential terms $e^{\alpha t}$.
We get the exact following solutions of the system of integral formula \eqref{hn=int}: 
\begin{align*}
h_{0} =& \, h_{1} = h_{2} = h_{3} =  0,
\\
h_{{4}} = & \frac{\mu_{22}}{\lambda_2+\lambda_2-\lambda_4}\,
  {G_{{2}}}^{2} \left( 1- {e^{-(\lambda_2+\lambda_2-\lambda_4)\,t}}
 \right),  \\ 
h_5 = & 
 \frac{\mu_{23} + \mu_{32}}{\lambda_2+\lambda_3-\lambda_5} \, 
 G_2 G_3 \left(1 - e^{-(\lambda_2+\lambda_3-\lambda_5)\,t}
 \right),  \\
%\text{etc}
\cdots
\end{align*}
From the symbolic expression of $h_n$ we compute the numerical approximation :
\begin{align*}
h_{0} =& \, h_{1} = h_{2} = h_{3} =  0, 
\\
h_{{4}} = & 2.51\,{G_{{2}}}^{2} \left( 1- {e^{- 0.279\,t}}
 \right), \\ 
h_{{5}} = & 1.62\,G_{{2}}G_{{3}} \left( 1- {e^{- 0.698\,t}}
 \right), \\ 
h_{{6}} = & 0.322\,{G_{{3}}}^{2} \left( 1- {e^{- 1.20\,t}} \right) +  
1.17\, \left( 1 - \,{ e^{- 0.928\,t}} \right) G_{{2}}G_{{4}}\\
& + \left( - 2.95\,{e^{- 0.928\,t}}+ 0.677+ 2.26\,{
e^{- 1.20\,t}} \right) {G_{{2}}}^{3}, \\
h_{{7}} = & 0.501\,G_{{2}}G_{{5}} \left( 1- {e^{- 1.09\,t}}
 \right)+ 0.220\,\left( 1 - \,{e^{- 1.51\,t}} \right) G_{{3}
}G_{{4}}\\ 
& + \left(  0.201+ 0.478\,{e^{- 1.79\,t}}- 0.274\,{
e^{- 1.51\,t}}- 0.407\,{e^{- 1.09\,t}} \right) {G_{{2}}}^{
2}G_{{3}},
\\
...
\end{align*}
We finally get from \eqref{gnl N=} the approximation $g^{n\ell}_N$  
of the nonlinear part of the solution $g^{n\ell}(t,v)$ by 
the following algorithm : 
\smallskip
\textit{
\null\\
\null\hskip1cm $g^{n\ell}_N(t,v)  \leftarrow 0$ 
\\
\null\hskip1cm for $n$  from 2  to N  do\\
\null\hskip2cm  $g^{n\ell}_N(t,v) 
\leftarrow  g^{n\ell}_{N} (t,v) 
  + e^{-\lambda_n t} \, h_n(t) \, \varphi_n(v) $ 
}

\smallskip
\noindent
The symbolic and numerical computation of the nonlinear part of the solution 
plays the main difficulty of our method. 
We analyze the computation time and rounding off error in the next section.

\subsection{Discussions on the symbolic computation}
From the computation of the linear \eqref{gln N=} and nonlinear \eqref{gnl N=} part, 
we calculate the approximated solution of the Boltzmann equation \eqref{eq Boltz f}  
\begin{equation}\label{fN=}
f_N = \mu + \sqrt{\mu} \, (g_N^{\ell in} + g_N^{n\ell}).
\end{equation}
The method using the software Maple$^{\mbox{\scriptsize{\textregistered}}}$13 
and its internal function "{\bf int}$(f(x), x=a..b)$" for symbolic computation 
of integrals seems limited to a number $N$ around 20, since for $N=20$, 
the number of terms of $h_{20}$ is around 5000 
and the computation time is around 50 seconds.
Moreover they are both exponentially increasing (see Figure \ref{Fig time hn}).
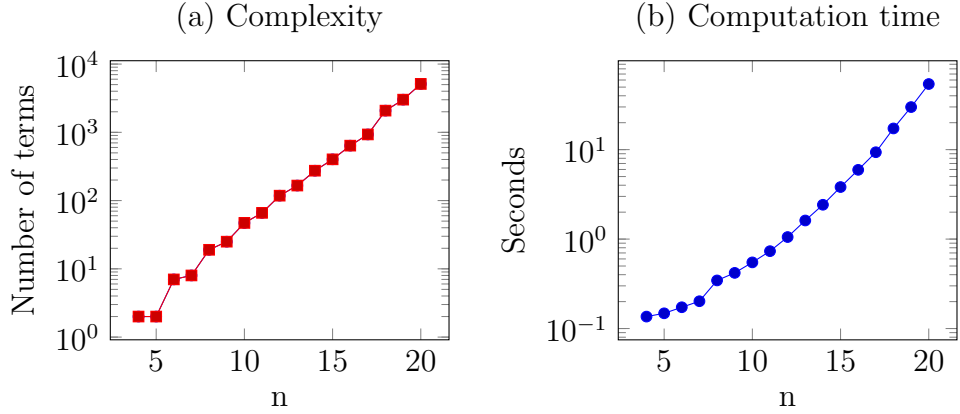
\begin{figure}[h]
\centering
\begin{tikzpicture}
\begin{semilogyaxis}
[scale=0.65,
title={(a) Complexity},
xlabel={n},
ylabel={Number of terms},
]
\addplot table {
%data/complexite.dat
4  2
5  2
6  7
7  8
8  19
9  25
10  47
11  66
12  118
13  166
14  274
15  402
16  638
17  931
18  2079
19  3011
20  5108
};%
\addplot coordinates { (4 , 2)  (5 , 2)  (6 , 7)  (7 , 8)  (8 , 19)  (9 , 25)  (10 , 47)  (11 , 66)  (12 , 118)  (13 , 166)  (14 , 274)  (15 , 402)  (16 , 638)  (17 , 931)  (18 , 2079)  (19 , 3011)  (20 , 5108)   }; 
\end{semilogyaxis}
\end{tikzpicture}
\quad
\begin{tikzpicture}
\begin{semilogyaxis}
[scale=0.65,
title={(b) Computation time},
xlabel={n},
ylabel={Seconds},
]
%\addplot table {data/duree.dat};
\addplot coordinates { (4 , 0.136)  (5 , 0.148)  (6 , 0.173)  (7 , 0.202)  (8 , 0.345)  (9 , 0.419)  (10 , 0.549)  (11 , 0.733)  (12 , 1.054)  (13 , 1.614)  (14 , 2.419)  (15 , 3.818)  (16 , 5.943)  (17 , 9.354)  (18 , 17.252)  (19 , 29.907)  (20 , 54.121)   };
\end{semilogyaxis}
\end{tikzpicture}
\caption{Number of terms for $h_n$ and computation time in seconds for $h_1,\ldots,h_n$.}
\label{Fig time hn}
\end{figure} 
We now estimate the truncation and rounding error due to the software computations.
For a regular $L^2$ initial data 
we have computed the solution $f_N$ for $N = 20$ and 
different number of digits 
(we can control the number of digits that Maple$^{\mbox{\scriptsize{\textregistered}}}$13 
uses when making calculations with software floating-point numbers). 
We set $P_1$ and $P_2$ two numbers of digits and
we compare the two numerical solutions $f_N^{P_1}$ and $f_N^{P_2}$ 
computed respectively using $P_1$ and $P_2$. 
We define the rounding relative error as   
\begin{equation*}
 \text{error} = \frac{\|f^{P_1}_N - f^{P_2}_N\|_{\infty}}{\|f^{P_2}_N\|_{\infty}} 
\end{equation*}
and we get the following results for different choices of $(P_1, P_2)$ :  
\begin{table}[h]
\begin{tabular}{|c|c|c|c|c|c|}
\hline 
$(P_1,P_2)$ & (10,20) & (20,30) & (30,40) & (40,50) & (50,100)
\\ 
\hline 
error & $3.8 \, \, 10^{-4}$ & $2.3\, \, 10^{-15}$ 
  & $2.0\, \, 10^{-29}$ & $8.2\, \, 10^{-40}$ & $1.4 \, \, 10^{-49}$  
\\ 
\hline 
\end{tabular}
\vskip8pt 
\caption{Relative rounding off error.}
\label{table error}
\end{table}
We check from the table \ref{table error} that the relative error is roughly 
10 times the precision of the computation of $f_N^{P_1}$.
The figure \ref{fig CPU time} represents the computation time of the solution $f_N$ 
with a regular $L^2$ initial data and for different numbers of digits. 
%
%  -----------  FIGURE   --------------
\begin{figure}[h]
\centering
\begin{tikzpicture}
%\begin{semilogyaxis}
\begin{axis}
[%grid= major,
height=6cm,
width=12cm,
xscale=0.65,
yscale=0.65,
xlabel={Number of digits},
xtick distance=500,
%xticklabels={2000},
%extra x ticks={5, 1000, 2000},
ylabel={CPU Time in sec.}, 
ylabel shift={10pt}, 
]
%\addplot table {data/complexite.dat};%
\addplot coordinates {  
%(10 , 84.490)  
%(20 , 88.609)  (30 , 84.790)  (40 , 87.290) 
 (50 , 90.991)  
% (100 , 88.940) % (150,98) 
 (210, 100) (500,135) (1000,138) (1500,319) (2000,618) }; 
%\end{semilogyaxis}
\end{axis}
\end{tikzpicture}
\caption{Computation time for different number of digits.}
\label{fig CPU time}
\end{figure}
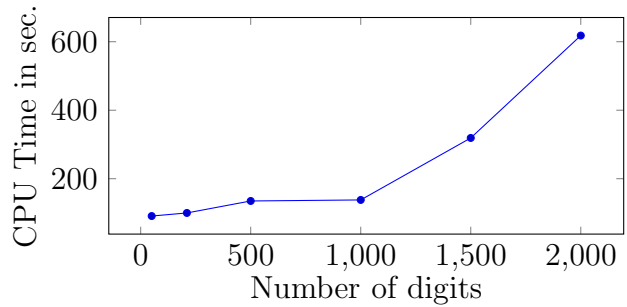
%
%------------------   END FIGURE  -----------------
%
The computations of the solution $f_N$ 
was run on a computer having 8 Xeon processors 2.33 GHz with 8 GB of memory.
The method using Maple$^{\mbox{\scriptsize{\textregistered}}}$13 on this computer
seems limited to a number around $N=20$.   
Surprisingly, the computation time is roughly the same (around 90 seconds) 
for a number of digits between 20 and 1000. 
The main part of this time is therefore used for algebraic manipulation.

We present in the two upcoming sections the results of the computation 
for different initial values.

%====================================================================
% SECTION   BI-GAUSSIAN
%====================================================================

%-------------------------------------------------------------------------------------------------------
\section{Radial bi-Gaussian initial value}
We set the initial data : 
\begin{equation*}
\tilde F(w) =  
\frac{1}{(2\pi)^{\frac{3}{2}}}
\Big(
  \exp\big(-\frac12\, (|w|+1)^2 \big)
+ \exp\big(-\frac12\, (|w|-1)^2 \big) \Big).
\end{equation*}
We next rescale the initial data following lemma \ref{lem G ortho N}. 
We show in figure~\ref{Fig bi-gauss F1} the spectral approximation $F_N(v)$ 
of the initial data $F(v)$ such that $F_N(v)= \mu(v) + \sqrt{\mu(v)}\, G_N(v)$ 
where $G_N(v)=\sum_{n=0}^N G_n\,\varphi_n(v)$. 
% 
%----------------------------------------------------------------------
%---------- SORTIE PDFPLOT APPROXIMATION DE F PAR F5 F10 F20 ----------
\begin{figure}[h]
\centering
\begin{tikzpicture}
\begin{axis}
[scale=1.1,
xlabel={v},
scaled ticks=false,
legend entries={$F$,$F_{20}$,$F_{10}$,$F_{5}$},
],
\addplot[ %---------- F ----------
mark=none, smooth, color=black, dashed, thick,
] 
coordinates { (-5 , 2.57709e-11)  (-4.875 , 9.90114e-11)  (-4.75 , 3.63738e-10)  (-4.625 , 1.27773e-09)  (-4.5 , 4.29177e-09)  (-4.375 , 1.37842e-08)  (-4.25 , 4.23325e-08)  (-4.125 , 1.24312e-07)  (-4 , 3.49060e-07)  (-3.875 , 9.37203e-07)  (-3.75 , 2.40611e-06)  (-3.625 , 5.90668e-06)  (-3.5 , 1.38650e-05)  (-3.375 , 3.11201e-05)  (-3.25 , 6.67900e-05)  (-3.125 , 0.000137066)  (-3 , 0.000268963)  (-2.875 , 0.000504667)  (-2.75 , 0.000905448)  (-2.625 , 0.00155335)  (-2.5 , 0.00254814)  (-2.375 , 0.0039969)  (-2.25 , 0.00599475)  (-2.125 , 0.00859739)  (-2 , 0.0117899)  (-1.875 , 0.0154596)  (-1.75 , 0.0193837)  (-1.625 , 0.0232394)  (-1.5 , 0.026642)  (-1.375 , 0.0292057)  (-1.25 , 0.0306158)  (-1.125 , 0.030693)  (-1 , 0.0294336)  (-0.875 , 0.0270129)  (-0.75 , 0.0237532)  (-0.625 , 0.0200654)  (-0.5 , 0.0163814)  (-0.375 , 0.0130958)  (-0.25 , 0.0105266)  (-0.125 , 0.0088983)  (0 , 0.00834136)  (0.125 , 0.0088983)  (0.25 , 0.0105266)  (0.375 , 0.0130958)  (0.5 , 0.0163814)  (0.625 , 0.0200654)  (0.75 , 0.0237532)  (0.875 , 0.0270129)  (1 , 0.0294336)  (1.125 , 0.030693)  (1.25 , 0.0306158)  (1.375 , 0.0292057)  (1.5 , 0.026642)  (1.625 , 0.0232394)  (1.75 , 0.0193837)  (1.875 , 0.0154596)  (2 , 0.0117899)  (2.125 , 0.00859739)  (2.25 , 0.00599475)  (2.375 , 0.0039969)  (2.5 , 0.00254814)  (2.625 , 0.00155335)  (2.75 , 0.000905448)  (2.875 , 0.000504667)  (3 , 0.000268963)  (3.125 , 0.000137066)  (3.25 , 6.67900e-05)  (3.375 , 3.11201e-05)  (3.5 , 1.38650e-05)  (3.625 , 5.90668e-06)  (3.75 , 2.40611e-06)  (3.875 , 9.37203e-07)  (4 , 3.49060e-07)  (4.125 , 1.24312e-07)  (4.25 , 4.23325e-08)  (4.375 , 1.37842e-08)  (4.5 , 4.29177e-09)  (4.625 , 1.27773e-09)  (4.75 , 3.63738e-10)  (4.875 , 9.90114e-11)  (5 , 2.57709e-11)   }; 
\addplot[ %---------- F_20 ----------
mark=none, smooth, very thick, color=blue,
] 
coordinates { (-5 , 1.44929e-09)  (-4.875 , 1.40447e-09)  (-4.75 , 1.82299e-10)  (-4.625 , -1.58104e-09)  (-4.5 , -1.20572e-09)  (-4.375 , 8.55059e-09)  (-4.25 , 4.27480e-08)  (-4.125 , 1.34854e-07)  (-4 , 3.68464e-07)  (-3.875 , 9.54491e-07)  (-3.75 , 2.40327e-06)  (-3.625 , 5.87073e-06)  (-3.5 , 1.38040e-05)  (-3.375 , 3.10723e-05)  (-3.25 , 6.68085e-05)  (-3.125 , 0.000137179)  (-3 , 0.000269132)  (-2.875 , 0.000504774)  (-2.75 , 0.000905359)  (-2.625 , 0.00155303)  (-2.5 , 0.00254773)  (-2.375 , 0.00399672)  (-2.25 , 0.00599508)  (-2.125 , 0.00859821)  (-2 , 0.0117907)  (-1.875 , 0.0154598)  (-1.75 , 0.0193828)  (-1.625 , 0.0232376)  (-1.5 , 0.0266405)  (-1.375 , 0.0292059)  (-1.25 , 0.0306184)  (-1.125 , 0.0306968)  (-1 , 0.0294358)  (-0.875 , 0.027011)  (-0.75 , 0.0237468)  (-0.625 , 0.0200576)  (-0.5 , 0.0163785)  (-0.375 , 0.0131046)  (-0.25 , 0.0105503)  (-0.125 , 0.00893453)  (0 , 0.00838251)  (0.125 , 0.00893453)  (0.25 , 0.0105503)  (0.375 , 0.0131046)  (0.5 , 0.0163785)  (0.625 , 0.0200576)  (0.75 , 0.0237468)  (0.875 , 0.027011)  (1 , 0.0294358)  (1.125 , 0.0306968)  (1.25 , 0.0306184)  (1.375 , 0.0292059)  (1.5 , 0.0266405)  (1.625 , 0.0232376)  (1.75 , 0.0193828)  (1.875 , 0.0154598)  (2 , 0.0117907)  (2.125 , 0.00859821)  (2.25 , 0.00599508)  (2.375 , 0.00399672)  (2.5 , 0.00254773)  (2.625 , 0.00155303)  (2.75 , 0.000905359)  (2.875 , 0.000504774)  (3 , 0.000269132)  (3.125 , 0.000137179)  (3.25 , 6.68085e-05)  (3.375 , 3.10723e-05)  (3.5 , 1.38040e-05)  (3.625 , 5.87073e-06)  (3.75 , 2.40327e-06)  (3.875 , 9.54491e-07)  (4 , 3.68464e-07)  (4.125 , 1.34854e-07)  (4.25 , 4.27480e-08)  (4.375 , 8.55059e-09)  (4.5 , -1.20572e-09)  (4.625 , -1.58104e-09)  (4.75 , 1.82299e-10)  (4.875 , 1.40447e-09)  (5 , 1.44929e-09)   }; 
\addplot[ %---------- F_10 ----------
mark=none, smooth, color=red,] 
coordinates { (-5 , 9.19704e-08)  (-4.875 , 1.26989e-07)  (-4.75 , 1.25545e-07)  (-4.625 , 5.16632e-08)  (-4.5 , -1.21828e-07)  (-4.375 , -3.85527e-07)  (-4.25 , -6.58535e-07)  (-4.125 , -7.57375e-07)  (-4 , -3.80818e-07)  (-3.875 , 8.91852e-07)  (-3.75 , 3.63283e-06)  (-3.625 , 8.74590e-06)  (-3.5 , 1.80107e-05)  (-3.375 , 3.52952e-05)  (-3.25 , 6.87580e-05)  (-3.125 , 0.000134229)  (-3 , 0.000259606)  (-2.875 , 0.000489538)  (-2.75 , 0.000888944)  (-2.625 , 0.00154332)  (-2.5 , 0.00255356)  (-2.375 , 0.00402356)  (-2.25 , 0.00604025)  (-2.125 , 0.00864815)  (-2 , 0.0118228)  (-1.875 , 0.0154496)  (-1.75 , 0.0193162)  (-1.625 , 0.0231222)  (-1.5 , 0.0265112)  (-1.375 , 0.0291194)  (-1.25 , 0.0306344)  (-1.125 , 0.0308513)  (-1 , 0.0297146)  (-0.875 , 0.0273353)  (-0.75 , 0.0239801)  (-0.625 , 0.0200357)  (-0.5 , 0.0159556)  (-0.375 , 0.0122016)  (-0.25 , 0.00918908)  (-0.125 , 0.00724364)  (0 , 0.00657165)  (0.125 , 0.00724364)  (0.25 , 0.00918908)  (0.375 , 0.0122016)  (0.5 , 0.0159556)  (0.625 , 0.0200357)  (0.75 , 0.0239801)  (0.875 , 0.0273353)  (1 , 0.0297146)  (1.125 , 0.0308513)  (1.25 , 0.0306344)  (1.375 , 0.0291194)  (1.5 , 0.0265112)  (1.625 , 0.0231222)  (1.75 , 0.0193162)  (1.875 , 0.0154496)  (2 , 0.0118228)  (2.125 , 0.00864815)  (2.25 , 0.00604025)  (2.375 , 0.00402356)  (2.5 , 0.00255356)  (2.625 , 0.00154332)  (2.75 , 0.000888944)  (2.875 , 0.000489538)  (3 , 0.000259606)  (3.125 , 0.000134229)  (3.25 , 6.87580e-05)  (3.375 , 3.52952e-05)  (3.5 , 1.80107e-05)  (3.625 , 8.74590e-06)  (3.75 , 3.63283e-06)  (3.875 , 8.91852e-07)  (4 , -3.80818e-07)  (4.125 , -7.57375e-07)  (4.25 , -6.58535e-07)  (4.375 , -3.85527e-07)  (4.5 , -1.21828e-07)  (4.625 , 5.16632e-08)  (4.75 , 1.25545e-07)  (4.875 , 1.26989e-07)  (5 , 9.19704e-08)   }; 
\addplot[ %---------- F_5 ----------
mark=none, smooth, color=green, dotted, very thick,]
 coordinates { (-5 , -2.19536e-07)  (-4.875 , -2.24300e-07)  (-4.75 , -1.45024e-07)  (-4.625 , 7.62584e-08)  (-4.5 , 4.97551e-07)  (-4.375 , 1.15093e-06)  (-4.25 , 2.00379e-06)  (-4.125 , 2.91778e-06)  (-4 , 3.63402e-06)  (-3.875 , 3.84109e-06)  (-3.75 , 3.41741e-06)  (-3.625 , 2.97585e-06)  (-3.5 , 4.86201e-06)  (-3.375 , 1.47483e-05)  (-3.25 , 4.38994e-05)  (-3.125 , 0.000112039)  (-3 , 0.000250515)  (-2.875 , 0.000505142)  (-2.75 , 0.000937789)  (-2.625 , 0.0016255)  (-2.5 , 0.00265586)  (-2.375 , 0.00411764)  (-2.25 , 0.0060863)  (-2.125 , 0.00860521)  (-2 , 0.0116647)  (-1.875 , 0.0151827)  (-1.75 , 0.0189914)  (-1.625 , 0.0228356)  (-1.5 , 0.0263863)  (-1.375 , 0.0292716)  (-1.25 , 0.0311241)  (-1.125 , 0.0316384)  (-1 , 0.0306309)  (-0.875 , 0.0280891)  (-0.75 , 0.0242003)  (-0.625 , 0.0193496)  (-0.5 , 0.0140837)  (-0.375 , 0.00904311)  (-0.25 , 0.00487183)  (-0.125 , 0.00212059)  (0 , 0.00115996)  (0.125 , 0.00212059)  (0.25 , 0.00487183)  (0.375 , 0.00904311)  (0.5 , 0.0140837)  (0.625 , 0.0193496)  (0.75 , 0.0242003)  (0.875 , 0.0280891)  (1 , 0.0306309)  (1.125 , 0.0316384)  (1.25 , 0.0311241)  (1.375 , 0.0292716)  (1.5 , 0.0263863)  (1.625 , 0.0228356)  (1.75 , 0.0189914)  (1.875 , 0.0151827)  (2 , 0.0116647)  (2.125 , 0.00860521)  (2.25 , 0.0060863)  (2.375 , 0.00411764)  (2.5 , 0.00265586)  (2.625 , 0.0016255)  (2.75 , 0.000937789)  (2.875 , 0.000505142)  (3 , 0.000250515)  (3.125 , 0.000112039)  (3.25 , 4.38994e-05)  (3.375 , 1.47483e-05)  (3.5 , 4.86201e-06)  (3.625 , 2.97585e-06)  (3.75 , 3.41741e-06)  (3.875 , 3.84109e-06)  (4 , 3.63402e-06)  (4.125 , 2.91778e-06)  (4.25 , 2.00379e-06)  (4.375 , 1.15093e-06)  (4.5 , 4.97551e-07)  (4.625 , 7.62584e-08)  (4.75 , -1.45024e-07)  (4.875 , -2.24300e-07)  (5 , -2.19536e-07)   }; 
\end{axis}
\end{tikzpicture}
\caption{Approximation of the initial data}
\label{Fig bi-gauss F1}
\end{figure}
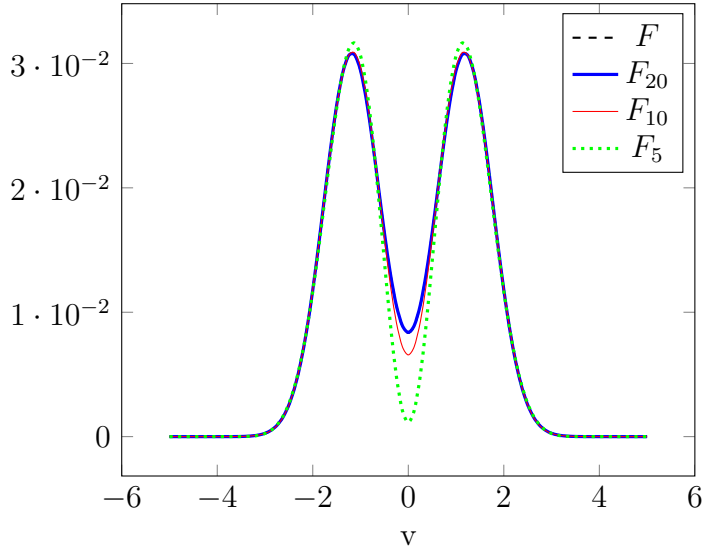
We then compute the solutions $h_n(t)$ from the proposition \ref{prop formel}
for $n = 4,5,\ldots,N$ with $N=20$. 
For each integer $n$, the function $t\to h_n(t)$ is monotone and 
tends to a finite limit when $t$ tends to infinity 
(See Figure \ref{Fig bi-Gauss CV} (a)).
We recall that $h_n(t)$ is a finite sum of decreasing exponential terms 
(see section \ref{meth.int}).
Since the initial data $G$ is a regular function, 
the spectral coefficients $G_n$ are exponentially decreasing. 
The numerical computation of $h_n(t)$ 
shows also that $\|h_n\|_{\infty}$ is exponentially decreasing with respect to $n$
(See Figure \ref{Fig bi-Gauss CV} (b)). 
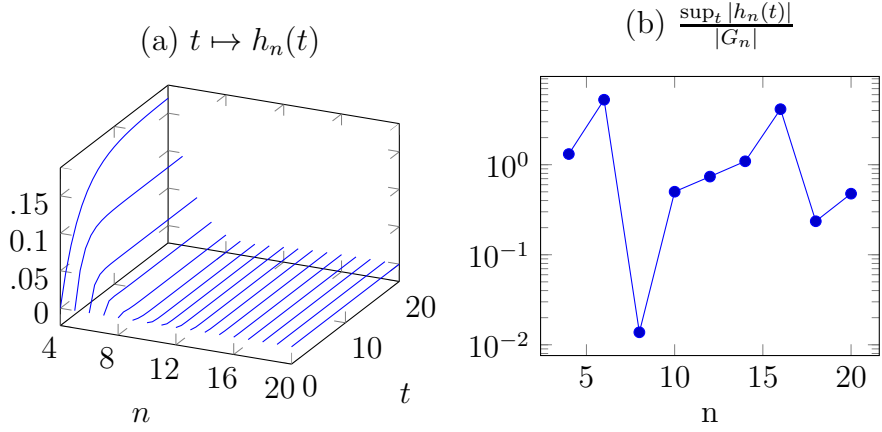
\begin{figure}[h]
\centering
\begin{tikzpicture}
\begin{axis}
[scale=0.65,
title= {(a) $t \mapsto h_n(t)$},
xlabel={$n$},
ylabel={$t$},
scaled ticks=false,
xtick={4,8,12,16,20},
ztick={0,0.05,0.1,0.15}, 
zticklabels={$0$,$.05$,$0.1$,$.15$}, 
]
%\addplot table {data/complexite.dat};
\addplot3 table [mark=none, ]{
%data/bi-gaussian_t_to_hn.dat
4  	0.000000e+00  	0 
4  	1.000000e+00  	0.0416132 
4  	2.000000e+00  	0.0730738 
4  	3.000000e+00  	0.0968587 
4  	4.000000e+00  	0.114841 
4  	5.000000e+00  	0.128435 
4  	6.000000e+00  	0.138713 
4  	7.000000e+00  	0.146484 
4  	8.000000e+00  	0.152358 
4  	9.000000e+00  	0.1568 
4  	1.000000e+01  	0.160157 
4  	1.100000e+01  	0.162696 
4  	1.200000e+01  	0.164615 
4  	1.300000e+01  	0.166066 
4  	1.400000e+01  	0.167163 
4  	1.500000e+01  	0.167992 
4  	1.600000e+01  	0.168619 
4  	1.700000e+01  	0.169093 
4  	1.800000e+01  	0.169452 
4  	1.900000e+01  	0.169723 
4  	2.000000e+01  	0.169928

5  	0.000000e+00  	0 
5  	1.000000e+00  	0.0481219 
5  	2.000000e+00  	0.0720375 
5  	3.000000e+00  	0.0839231 
5  	4.000000e+00  	0.0898299 
5  	5.000000e+00  	0.0927655 
5  	6.000000e+00  	0.0942245 
5  	7.000000e+00  	0.0949495 
5  	8.000000e+00  	0.0953099 
5  	9.000000e+00  	0.0954889 
5  	1.000000e+01  	0.0955779 
5  	1.100000e+01  	0.0956222 
5  	1.200000e+01  	0.0956442 
5  	1.300000e+01  	0.0956551 
5  	1.400000e+01  	0.0956605 
5  	1.500000e+01  	0.0956632 
5  	1.600000e+01  	0.0956645 
5  	1.700000e+01  	0.0956652 
5  	1.800000e+01  	0.0956655 
5  	1.900000e+01  	0.0956657 
5  	2.000000e+01  	0.0956658

6  	0.000000e+00  	0 
6  	1.000000e+00  	0.0318772 
6  	2.000000e+00  	0.0406701 
6  	3.000000e+00  	0.04301 
6  	4.000000e+00  	0.0435961 
6  	5.000000e+00  	0.0437267 
6  	6.000000e+00  	0.0437482 
6  	7.000000e+00  	0.0437478 
6  	8.000000e+00  	0.0437449 
6  	9.000000e+00  	0.043743 
6  	1.000000e+01  	0.043742 
6  	1.100000e+01  	0.0437415 
6  	1.200000e+01  	0.0437413 
6  	1.300000e+01  	0.0437412 
6  	1.400000e+01  	0.0437412 
6  	1.500000e+01  	0.0437412 
6  	1.600000e+01  	0.0437412 
6  	1.700000e+01  	0.0437411 
6  	1.800000e+01  	0.0437411 
6  	1.900000e+01  	0.0437411 
6  	2.000000e+01  	0.0437411

7  	0.000000e+00  	1e-11 
7  	1.000000e+00  	0.0139551 
7  	2.000000e+00  	0.0148689 
7  	3.000000e+00  	0.0145599 
7  	4.000000e+00  	0.0143564 
7  	5.000000e+00  	0.014272 
7  	6.000000e+00  	0.0142411 
7  	7.000000e+00  	0.0142303 
7  	8.000000e+00  	0.0142266 
7  	9.000000e+00  	0.0142253 
7  	1.000000e+01  	0.0142249 
7  	1.100000e+01  	0.0142248 
7  	1.200000e+01  	0.0142247 
7  	1.300000e+01  	0.0142247 
7  	1.400000e+01  	0.0142247 
7  	1.500000e+01  	0.0142247 
7  	1.600000e+01  	0.0142247 
7  	1.700000e+01  	0.0142247 
7  	1.800000e+01  	0.0142247 
7  	1.900000e+01  	0.0142247 
7  	2.000000e+01  	0.0142247

8  	0.000000e+00  	-2e-12 
8  	1.000000e+00  	0.0024101 
8  	2.000000e+00  	0.0010307 
8  	3.000000e+00  	0.000472908 
8  	4.000000e+00  	0.000299719 
8  	5.000000e+00  	0.000248389 
8  	6.000000e+00  	0.000233237 
8  	7.000000e+00  	0.000228745 
8  	8.000000e+00  	0.000227408 
8  	9.000000e+00  	0.000227008 
8  	1.000000e+01  	0.000226889 
8  	1.100000e+01  	0.000226853 
8  	1.200000e+01  	0.000226842 
8  	1.300000e+01  	0.000226839 
8  	1.400000e+01  	0.000226838 
8  	1.500000e+01  	0.000226838 
8  	1.600000e+01  	0.000226837 
8  	1.700000e+01  	0.000226837 
8  	1.800000e+01  	0.000226837 
8  	1.900000e+01  	0.000226837 
8  	2.000000e+01  	0.000226837

9  	0.000000e+00  	5e-12 
9  	1.000000e+00  	-0.00286386 
9  	2.000000e+00  	-0.00432937 
9  	3.000000e+00  	-0.00471077 
9  	4.000000e+00  	-0.00480807 
9  	5.000000e+00  	-0.00483356 
9  	6.000000e+00  	-0.00484037 
9  	7.000000e+00  	-0.00484221 
9  	8.000000e+00  	-0.00484271 
9  	9.000000e+00  	-0.00484285 
9  	1.000000e+01  	-0.00484289 
9  	1.100000e+01  	-0.0048429 
9  	1.200000e+01  	-0.0048429 
9  	1.300000e+01  	-0.0048429 
9  	1.400000e+01  	-0.0048429 
9  	1.500000e+01  	-0.0048429 
9  	1.600000e+01  	-0.0048429 
9  	1.700000e+01  	-0.0048429 
9  	1.800000e+01  	-0.0048429 
9  	1.900000e+01  	-0.0048429 
9  	2.000000e+01  	-0.0048429

10  	0.000000e+00  	3e-12 
10  	1.000000e+00  	-0.00423512 
10  	2.000000e+00  	-0.00526324 
10  	3.000000e+00  	-0.00548177 
10  	4.000000e+00  	-0.00553225 
10  	5.000000e+00  	-0.0055445 
10  	6.000000e+00  	-0.00554754 
10  	7.000000e+00  	-0.00554831 
10  	8.000000e+00  	-0.0055485 
10  	9.000000e+00  	-0.00554855 
10  	1.000000e+01  	-0.00554856 
10  	1.100000e+01  	-0.00554856 
10  	1.200000e+01  	-0.00554857 
10  	1.300000e+01  	-0.00554857 
10  	1.400000e+01  	-0.00554857 
10  	1.500000e+01  	-0.00554857 
10  	1.600000e+01  	-0.00554857 
10  	1.700000e+01  	-0.00554857 
10  	1.800000e+01  	-0.00554857 
10  	1.900000e+01  	-0.00554857 
10  	2.000000e+01  	-0.00554857

11  	0.000000e+00  	-2e-12 
11  	1.000000e+00  	-0.00378209 
11  	2.000000e+00  	-0.00439674 
11  	3.000000e+00  	-0.00451268 
11  	4.000000e+00  	-0.00453773 
11  	5.000000e+00  	-0.00454345 
11  	6.000000e+00  	-0.00454479 
11  	7.000000e+00  	-0.0045451 
11  	8.000000e+00  	-0.00454517 
11  	9.000000e+00  	-0.00454519 
11  	1.000000e+01  	-0.0045452 
11  	1.100000e+01  	-0.0045452 
11  	1.200000e+01  	-0.0045452 
11  	1.300000e+01  	-0.0045452 
11  	1.400000e+01  	-0.0045452 
11  	1.500000e+01  	-0.0045452 
11  	1.600000e+01  	-0.0045452 
11  	1.700000e+01  	-0.0045452 
11  	1.800000e+01  	-0.0045452 
11  	1.900000e+01  	-0.0045452 
11  	2.000000e+01  	-0.0045452

12  	0.000000e+00  	0 
12  	1.000000e+00  	-0.00275973 
12  	2.000000e+00  	-0.00309163 
12  	3.000000e+00  	-0.00314925 
12  	4.000000e+00  	-0.00316104 
12  	5.000000e+00  	-0.00316359 
12  	6.000000e+00  	-0.00316416 
12  	7.000000e+00  	-0.00316428 
12  	8.000000e+00  	-0.00316431 
12  	9.000000e+00  	-0.00316432 
12  	1.000000e+01  	-0.00316432 
12  	1.100000e+01  	-0.00316432 
12  	1.200000e+01  	-0.00316432 
12  	1.300000e+01  	-0.00316432 
12  	1.400000e+01  	-0.00316432 
12  	1.500000e+01  	-0.00316432 
12  	1.600000e+01  	-0.00316432 
12  	1.700000e+01  	-0.00316432 
12  	1.800000e+01  	-0.00316432 
12  	1.900000e+01  	-0.00316432 
12  	2.000000e+01  	-0.00316432

13  	0.000000e+00  	-1e-12 
13  	1.000000e+00  	-0.0017643 
13  	2.000000e+00  	-0.00192676 
13  	3.000000e+00  	-0.00195303 
13  	4.000000e+00  	-0.00195814 
13  	5.000000e+00  	-0.00195919 
13  	6.000000e+00  	-0.00195941 
13  	7.000000e+00  	-0.00195946 
13  	8.000000e+00  	-0.00195947 
13  	9.000000e+00  	-0.00195947 
13  	1.000000e+01  	-0.00195947 
13  	1.100000e+01  	-0.00195947 
13  	1.200000e+01  	-0.00195947 
13  	1.300000e+01  	-0.00195947 
13  	1.400000e+01  	-0.00195947 
13  	1.500000e+01  	-0.00195947 
13  	1.600000e+01  	-0.00195947 
13  	1.700000e+01  	-0.00195947 
13  	1.800000e+01  	-0.00195947 
13  	1.900000e+01  	-0.00195947 
13  	2.000000e+01  	-0.00195947

14  	0.000000e+00  	1e-12 
14  	1.000000e+00  	-0.00100339 
14  	2.000000e+00  	-0.00107259 
14  	3.000000e+00  	-0.0010829 
14  	4.000000e+00  	-0.0010848 
14  	5.000000e+00  	-0.00108518 
14  	6.000000e+00  	-0.00108525 
14  	7.000000e+00  	-0.00108527 
14  	8.000000e+00  	-0.00108527 
14  	9.000000e+00  	-0.00108527 
14  	1.000000e+01  	-0.00108527 
14  	1.100000e+01  	-0.00108527 
14  	1.200000e+01  	-0.00108527 
14  	1.300000e+01  	-0.00108527 
14  	1.400000e+01  	-0.00108527 
14  	1.500000e+01  	-0.00108527 
14  	1.600000e+01  	-0.00108527 
14  	1.700000e+01  	-0.00108527 
14  	1.800000e+01  	-0.00108527 
14  	1.900000e+01  	-0.00108527 
14  	2.000000e+01  	-0.00108527

15  	0.000000e+00  	7e-13 
15  	1.000000e+00  	-0.000497244 
15  	2.000000e+00  	-0.000519095 
15  	3.000000e+00  	-0.00052185 
15  	4.000000e+00  	-0.000522315 
15  	5.000000e+00  	-0.000522401 
15  	6.000000e+00  	-0.000522418 
15  	7.000000e+00  	-0.000522421 
15  	8.000000e+00  	-0.000522422 
15  	9.000000e+00  	-0.000522422 
15  	1.000000e+01  	-0.000522422 
15  	1.100000e+01  	-0.000522422 
15  	1.200000e+01  	-0.000522422 
15  	1.300000e+01  	-0.000522422 
15  	1.400000e+01  	-0.000522422 
15  	1.500000e+01  	-0.000522422 
15  	1.600000e+01  	-0.000522422 
15  	1.700000e+01  	-0.000522422 
15  	1.800000e+01  	-0.000522422 
15  	1.900000e+01  	-0.000522422 
15  	2.000000e+01  	-0.000522422

16  	0.000000e+00  	5e-13 
16  	1.000000e+00  	-0.000195708 
16  	2.000000e+00  	-0.000195952 
16  	3.000000e+00  	-0.000195517 
16  	4.000000e+00  	-0.000195413 
16  	5.000000e+00  	-0.000195391 
16  	6.000000e+00  	-0.000195387 
16  	7.000000e+00  	-0.000195386 
16  	8.000000e+00  	-0.000195386 
16  	9.000000e+00  	-0.000195386 
16  	1.000000e+01  	-0.000195386 
16  	1.100000e+01  	-0.000195386 
16  	1.200000e+01  	-0.000195386 
16  	1.300000e+01  	-0.000195386 
16  	1.400000e+01  	-0.000195386 
16  	1.500000e+01  	-0.000195386 
16  	1.600000e+01  	-0.000195386 
16  	1.700000e+01  	-0.000195386 
16  	1.800000e+01  	-0.000195386 
16  	1.900000e+01  	-0.000195386 
16  	2.000000e+01  	-0.000195386

17  	0.000000e+00  	5e-14 
17  	1.000000e+00  	-3.54328e-05 
17  	2.000000e+00  	-2.75458e-05 
17  	3.000000e+00  	-2.60596e-05 
17  	4.000000e+00  	-2.57848e-05 
17  	5.000000e+00  	-2.57338e-05 
17  	6.000000e+00  	-2.57242e-05 
17  	7.000000e+00  	-2.57225e-05 
17  	8.000000e+00  	-2.57221e-05 
17  	9.000000e+00  	-2.57221e-05 
17  	1.000000e+01  	-2.57221e-05 
17  	1.100000e+01  	-2.57221e-05 
17  	1.200000e+01  	-2.57221e-05 
17  	1.300000e+01  	-2.57221e-05 
17  	1.400000e+01  	-2.57221e-05 
17  	1.500000e+01  	-2.57221e-05 
17  	1.600000e+01  	-2.57221e-05 
17  	1.700000e+01  	-2.57221e-05 
17  	1.800000e+01  	-2.57221e-05 
17  	1.900000e+01  	-2.57221e-05 
17  	2.000000e+01  	-2.57221e-05

18  	0.000000e+00  	1.2e-13 
18  	1.000000e+00  	3.73679e-05 
18  	2.000000e+00  	4.69057e-05 
18  	3.000000e+00  	4.84909e-05 
18  	4.000000e+00  	4.87692e-05 
18  	5.000000e+00  	4.88191e-05 
18  	6.000000e+00  	4.88282e-05 
18  	7.000000e+00  	4.88298e-05 
18  	8.000000e+00  	4.88301e-05 
18  	9.000000e+00  	4.88302e-05 
18  	1.000000e+01  	4.88302e-05 
18  	1.100000e+01  	4.88302e-05 
18  	1.200000e+01  	4.88302e-05 
18  	1.300000e+01  	4.88302e-05 
18  	1.400000e+01  	4.88302e-05 
18  	1.500000e+01  	4.88302e-05 
18  	1.600000e+01  	4.88302e-05 
18  	1.700000e+01  	4.88302e-05 
18  	1.800000e+01  	4.88302e-05 
18  	1.900000e+01  	4.88302e-05 
18  	2.000000e+01  	4.88302e-05

19  	0.000000e+00  	4e-14 
19  	1.000000e+00  	6.12178e-05 
19  	2.000000e+00  	6.96907e-05 
19  	3.000000e+00  	7.10176e-05 
19  	4.000000e+00  	7.12429e-05 
19  	5.000000e+00  	7.12822e-05 
19  	6.000000e+00  	7.12891e-05 
19  	7.000000e+00  	7.12903e-05 
19  	8.000000e+00  	7.12905e-05 
19  	9.000000e+00  	7.12906e-05 
19  	1.000000e+01  	7.12906e-05 
19  	1.100000e+01  	7.12906e-05 
19  	1.200000e+01  	7.12906e-05 
19  	1.300000e+01  	7.12906e-05 
19  	1.400000e+01  	7.12906e-05 
19  	1.500000e+01  	7.12906e-05 
19  	1.600000e+01  	7.12906e-05 
19  	1.700000e+01  	7.12906e-05 
19  	1.800000e+01  	7.12906e-05 
19  	1.900000e+01  	7.12906e-05 
19  	2.000000e+01  	7.12906e-05

20  	0.000000e+00  	-4e-14 
20  	1.000000e+00  	6.08059e-05 
20  	2.000000e+00  	6.73844e-05 
20  	3.000000e+00  	6.83733e-05 
20  	4.000000e+00  	6.85366e-05 
20  	5.000000e+00  	6.85643e-05 
20  	6.000000e+00  	6.85691e-05 
20  	7.000000e+00  	6.85700e-05 
20  	8.000000e+00  	6.85701e-05 
20  	9.000000e+00  	6.85701e-05 
20  	1.000000e+01  	6.85701e-05 
20  	1.100000e+01  	6.85701e-05 
20  	1.200000e+01  	6.85701e-05 
20  	1.300000e+01  	6.85701e-05 
20  	1.400000e+01  	6.85701e-05 
20  	1.500000e+01  	6.85701e-05 
20  	1.600000e+01  	6.85701e-05 
20  	1.700000e+01  	6.85701e-05 
20  	1.800000e+01  	6.85701e-05 
20  	1.900000e+01  	6.85701e-05 
20  	2.000000e+01  	6.85701e-05 
};
\end{axis}
\end{tikzpicture}
%
%\quad
%
\begin{tikzpicture}
%\begin{axis}
\begin{semilogyaxis}
[scale=0.65,
title={(b) $\frac{\sup_t |h_n(t)|}{|G_n|}$},
%title=$n \mapsto \frac{\sup_t |h_n(t)|}{|G_n|}$,
xlabel={n},
]
\addplot coordinates { (4 , 1.31318)  (6 , 5.26793)  (8 , 0.0137603)  (10 , 0.501182)  (12 , 0.737472)  (14 , 1.09116)  (16 , 4.13792)  (18 , 0.235436)  (20 , 0.476499)   }; 
%\end{axis}
\end{semilogyaxis}
\end{tikzpicture}
\caption{Behavior of the nonlinear part $h_n$.}
\label{Fig bi-Gauss CV}
\end{figure}
In this special case, the linear part $e^{-t\mathcal{L}}G$ and
the nonlinear part  $e^{-t\mathcal{L}}h$ have roughly the same behavior.
We present in figure \ref{Fig bi-gauss CV 1} 
the graph of the linear part and nonlinear part
and the ratio in $L^2$-norm 
\begin{equation}\label{RN=}
R_N(t) 
= \frac{\|g_N^{n\ell}(t,\cdot)\|_{L^2}}{\|g_N^{\ell in}(t,\cdot)\|_{L^2}}
= \frac
       {\left(\sum_{n=4}^N |e^{-\lambda_n t}h_n(t)|^2\right)^{\frac12}}       {\left(\sum_{n=2}^N |e^{-\lambda_n t}G_n|^2\right)^{\frac12}}.
\end{equation}
We observe that the nonlinear part is very small compared to the other. 
%
% SORTIE PDFPLOT COMPARAISON glin ET Gnonlin
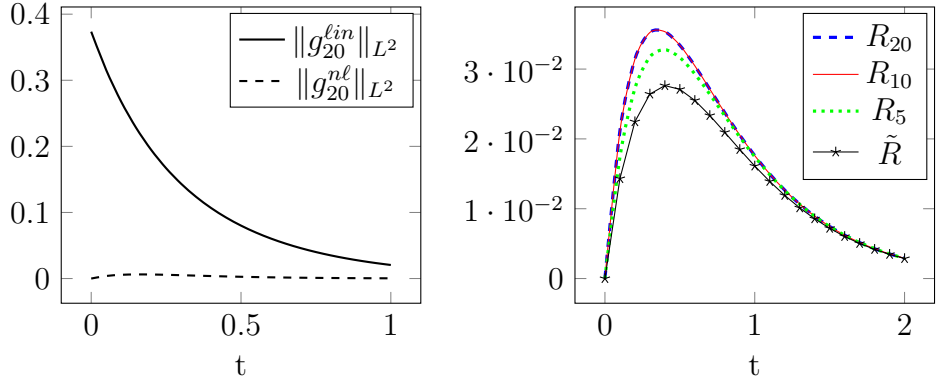
\begin{figure}[h]
\centering
\begin{tikzpicture}
\begin{axis}
[scale=0.69, 
xlabel={t},
scaled ticks=false,
legend entries={$\|g^{\ell in}_{20}\|_{L^2}$,$\|g^{n\ell}_{20}\|_{L^2}$},]
\addplot [mark=none, smooth, thick, ]  % g LINEAIRE
coordinates 
{ (0 , 0.373528)  (0.05 , 0.315573)  (0.1 , 0.267719)  (0.15 , 0.227973)  (0.2 , 0.194787)  (0.25 , 0.166944)  (0.3 , 0.143481)  (0.35 , 0.123627)  (0.4 , 0.106764)  (0.45 , 0.0923892)  (0.5 , 0.0800973)  (0.55 , 0.069555)  (0.6 , 0.0604889)  (0.65 , 0.0526733)  (0.7 , 0.0459207)  (0.75 , 0.040075)  (0.8 , 0.0350053)  (0.85 , 0.0306016)  (0.9 , 0.0267708)  (0.95 , 0.0234343)  (1 , 0.020525)   }; 
\addplot [mark=none, smooth, thick, dashed]  % g NON LINEAIRE
coordinates 
{ (0 , 1.20446e-11)  (0.05 , 0.00388195)  (0.1 , 0.00566097)  (0.15 , 0.00622724)  (0.2 , 0.00612069)  (0.25 , 0.0056666)  (0.3 , 0.00505808)  (0.35 , 0.00440686)  (0.4 , 0.00377481)  (0.45 , 0.00319355)  (0.5 , 0.00267666)  (0.55 , 0.00222731)  (0.6 , 0.00184289)  (0.65 , 0.00151786)  (0.7 , 0.0012455)  (0.75 , 0.00101886)  (0.8 , 0.000831294)  (0.85 , 0.000676755)  (0.9 , 0.000549892)  (0.95 , 0.000446062)  (1 , 0.000361301)   };
\end{axis}
\end{tikzpicture}
\,\,
\centering
\begin{tikzpicture}
\begin{axis}
[scale=0.69,
%title={$R_N(t)$},
%title={Non linear over linear.},
%title={$R_N(t) = \frac
%       {\sum_{n=0}^N |e^{-\lambda_n t}h_n(t)|^2}
%       {\sum_{n=0}^N |e^{-\lambda_n t}G_n|^2}$},
xlabel={t},
scaled ticks=false,
legend entries={$R_{20}$,$R_{10}$,$R_{5}$, $\tilde{R}$},]
\addplot[ % N=20
mark=none, smooth, color=blue, very thick, dashed, ] 
coordinates 
{ (0 , 3.24649e-11)  (0.1 , 0.0211452)  (0.2 , 0.0314225)  (0.3 , 0.0352525)  (0.4 , 0.0353567)  (0.5 , 0.0334177)  (0.6 , 0.0304665)  (0.7 , 0.0271229)  (0.8 , 0.0237476)  (0.9 , 0.0205407)  (1 , 0.017603)  (1.1 , 0.014976)  (1.2 , 0.012666)  (1.3 , 0.0106599)  (1.4 , 0.00893415)  (1.5 , 0.00746073)  (1.6 , 0.00621047)  (1.7 , 0.00515505)  (1.8 , 0.00426803)  (1.9 , 0.00352541)  (2 , 0.0029058)   }; 
\addplot[   % N=10
mark=none, smooth, color=red,  ] 
coordinates 
{ (0 , 3.21350e-11)  (0.1 , 0.0210936)  (0.2 , 0.0313929)  (0.3 , 0.0352399)  (0.4 , 0.035352)  (0.5 , 0.033416)  (0.6 , 0.030466)  (0.7 , 0.0271227)  (0.8 , 0.0237476)  (0.9 , 0.0205407)  (1 , 0.017603)  (1.1 , 0.014976)  (1.2 , 0.012666)  (1.3 , 0.0106599)  (1.4 , 0.00893415)  (1.5 , 0.00746073)  (1.6 , 0.00621047)  (1.7 , 0.00515505)  (1.8 , 0.00426803)  (1.9 , 0.00352541)  (2 , 0.0029058)   }; 
\addplot[     % N=5
mark=none, smooth, color=green, dotted, very thick, ]
coordinates 
{ (0 , 0)  (0.1 , 0.0173991)  (0.2 , 0.0272175)  (0.3 , 0.0317416)  (0.4 , 0.0327722)  (0.5 , 0.0316474)  (0.6 , 0.0293101)  (0.7 , 0.0263924)  (0.8 , 0.0232977)  (0.9 , 0.0202688)  (1 , 0.0174411)  (1.1 , 0.0148807)  (1.2 , 0.0126105)  (1.3 , 0.0106278)  (1.4 , 0.0089157)  (1.5 , 0.00745017)  (1.6 , 0.00620445)  (1.7 , 0.00515163)  (1.8 , 0.00426609)  (1.9 , 0.00352431)  (2 , 0.00290518)   }; 
\addplot 
coordinates 
{ (0 , 0)  (0.1 , 0.0143165)  (0.2 , 0.0224569)  (0.3 , 0.0264212)  (0.4 , 0.0276331)  (0.5 , 0.027096)  (0.6 , 0.0255083)  (0.7 , 0.0233481)  (0.8 , 0.020936)  (0.9 , 0.018481)  (1 , 0.0161135)  (1.1 , 0.0139097)  (1.2 , 0.0119088)  (1.3 , 0.0101257)  (1.4 , 0.00855908)  (1.5 , 0.00719842)  (1.6 , 0.00602755)  (1.7 , 0.00502774)  (1.8 , 0.00417953)  (1.9 , 0.00346391)  (2 , 0.00286306)   }; 
\end{axis}
\end{tikzpicture}
\caption{Comparison of the non linear part with respect to the linear part 
(see \eqref{gln N=}, \eqref{gnl N=}, \eqref{RN=}, \eqref{Rn approx}).} 
\label{Fig bi-gauss CV 1}
\end{figure}
We remark that in this case the series \eqref{gln N=} and \eqref{gnl N=}
behave as 
\begin{align*}
&g^{\ell in}_N(t,v) \approx e^{-\lambda_2 \, t} \, G_2  \, \varphi_2  (v),
\\
&g^{n\ell}_N(t,v) \approx 2.51 \, e^{-\lambda_4 \, t} \, 
   G_2^2 (1- e^{-(2\,\lambda_2-\lambda_4) \, t})  \, \varphi_4 (v), 
\end{align*}  
because the terms of $h_n(t)$ are composed of products of terms which 
are numerically converging to zero. 
The quotient (for $G_2\neq0$) of the two previous approximations behaves 
closely like the ratio (see \eqref{RN=})
\begin{equation}\label{Rn approx}
 R_N(t) \approx \tilde{R}(t) \stackrel{\text{def}}{=} 
 e^{-(\lambda_4-\lambda_2) \, t}  |G_2| (1- e^{-(2\,\lambda_2-\lambda_4) \, t}).
\end{equation}
We finaly compute (see figure \ref{Fig  bi-Gauss F})
\begin{figure}[h]
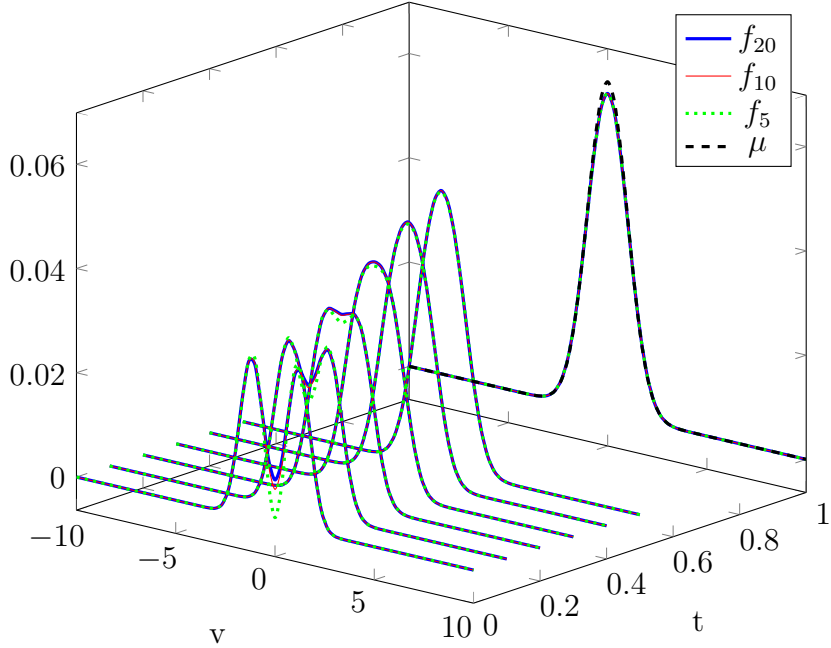

\centering
% [inline block 0: 1 envs, 40638 chars -> data_tex | \begin{tikzpicture} \begin{axis}...]

\caption{Graph of $(t,v) \mapsto f_{N}(t,v)$ for $N=5$, $20$ and $\mu(v)$.}
\label{Fig  bi-Gauss F}
%=\mu(v)+\sqrt{\mu(v)} \sum_{n=0}^{N} e^{-\lambda_n t}(G_n+h_n(t))$.
\end{figure}
the solution $f=\mu+\sqrt{\mu}g$ 
using the spectral Hermite eigenfunctions $\varphi_n(v)$ 
and the expansion \eqref{f=mu+mu2 G+h} 
of $g$ in this basis. 
Since the function $g(t,\cdot) \in \mathcal{N}^{\perp}$ for all time $t\geq 0$,
the approximate solution $f_N(t,\cdot)$ is naturally orthogonal to $\varphi_0$
and $\varphi_1$. 
Therefore is a conservation of the mass and the energy.
Finally, we check that the approximate solution $f_N$ converges to the Gaussian function
when the time tends to infinity.

%====================================================================
% SECTION   MEASURE  DATA
%====================================================================
\section{Numerical results for initial measure data}

We consider the initial measure data 
\[\tilde{F} = {\rm gaussian} + {\rm Dirac} = \mu + \delta. \] 
Following the lemma and rescaling the solution 
$F(v) = 2^{-\frac52} \tilde{F}(2^{-\frac12}v)$, 
we get the normalized initial data   
\begin{equation}\label{G measure}
\left\{
\begin{split}
F(v) &=  2^{-\frac52} \mu(2^{-\frac12}v) + 2^{-1} \delta(v) , 
\\
G(v) &= 2^{-\frac{13}{4}} \pi^{-\frac34} - \sqrt{\mu(v)} 
  + 2^{-\frac14} \pi^{\frac34} \, \delta(v) . 
\end{split}
\right.
\end{equation}
We verify that $\langle G, \varphi_0 \rangle = \langle G, \varphi_1 \rangle = 0$
and therefore $G \in \mathcal{N}^{\perp}$. 
We then compute the spectral coefficients for $n\geq 0$ 
(see proposition \ref{Gn=}): 
\begin{align*}
G_n = \langle G, \varphi_n \rangle 
= \frac{1+(-1)^n}{2} \, 
\left( \frac{(2\,n+1)!}{2^{2n}(n!)^2}\right)^{\frac12}.
\end{align*}
Note that the coefficients $G_{2n+1}$ are equal to zero and we have 
the following approximation of $G$:  
$$G(v) \approx \sum_{n=1}^\infty  \,n^{\frac14} \varphi_{2n}(v).$$
We set $F_{\text{reg}}(v) =  2^{-\frac52} \mu(2^{-\frac12}v)$ the regular part
of the distribution $F$.
We check in the left figure \ref{Fig mu+d F2} that the approximate initial data 
behaves as a Dirac function.
%

%----------------------------------------------------------------------
%---------- SORTIE PDFPLOT APPROXIMATION DE F PAR F5 F10 F20 ----------
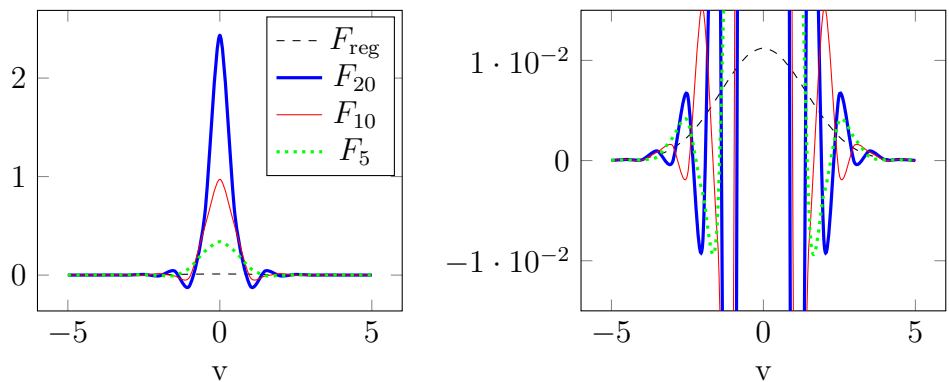
\begin{figure}[h]
\centering
\begin{tikzpicture}
\begin{axis}
[scale=.70,
xlabel={v},
scaled ticks=false,
legend entries={$F_{\text{reg}}$,$F_{20}$,$F_{10}$,$F_{5}$},
],
\addplot[ %---------- F_{\text{reg}} ----------
mark=none,
smooth,
color=black,
dashed,
] 
coordinates { (-5 , 2.16678e-05)  (-4.5 , 7.10460e-05)  (-4 , 0.000205578)  (-3.5 , 0.000524963)  (-3 , 0.00118302)  (-2.5 , 0.00235272)  (-2 , 0.00412915)  (-1.5 , 0.00639535)  (-1 , 0.00874141)  (-0.5 , 0.0105442)  (0 , 0.0112242)  (0.5 , 0.0105442)  (1 , 0.00874141)  (1.5 , 0.00639535)  (2 , 0.00412915)  (2.5 , 0.00235272)  (3 , 0.00118302)  (3.5 , 0.000524963)  (4 , 0.000205578)  (4.5 , 7.10460e-05)  (5 , 2.16678e-05)   }; 
\addplot[ %---------- F_20 ----------
mark=5,
smooth,
color=blue,
very thick,
] 
 coordinates { (-5 , 1.14026e-05)  (-4.5 , 0.000108477)  (-4 , 7.35889e-05)  (-3.5 , 0.000963903)  (-3 , -0.000217258)  (-2.5 , 0.00661076)  (-2 , -0.00836896)  (-1.5 , 0.0430698)  (-1 , -0.109922)  (-0.5 , 0.601972)  (0 , 2.43278)  (0.5 , 0.601972)  (1 , -0.109922)  (1.5 , 0.0430698)  (2 , -0.00836896)  (2.5 , 0.00661076)  (3 , -0.000217258)  (3.5 , 0.000963903)  (4 , 7.35889e-05)  (4.5 , 0.000108477)  (5 , 1.14026e-05)   }; 
\addplot[ %---------- F_10 ----------
mark=none,
smooth,
color=red,
] 
coordinates { (-5 , 2.63845e-05)  (-4.5 , 9.80485e-05)  (-4 , 4.38176e-05)  (-3.5 , 0.00083333)  (-3 , 0.00154507)  (-2.5 , -0.00145593)  (-2 , 0.0153269)  (-1.5 , -0.00828102)  (-1 , -0.0230109)  (-0.5 , 0.483845)  (0 , 0.971907)  (0.5 , 0.483845)  (1 , -0.0230109)  (1.5 , -0.00828102)  (2 , 0.0153269)  (2.5 , -0.00145593)  (3 , 0.00154507)  (3.5 , 0.00083333)  (4 , 4.38176e-05)  (4.5 , 9.80485e-05)  (5 , 2.63845e-05)   };
\addplot[ %---------- F_5 ----------
mark=none,
smooth,
color=green,
dotted, very thick,
]
coordinates { (-5 , 3.26115e-05)  (-4.5 , 7.09926e-05)  (-4 , 7.46046e-05)  (-3.5 , 0.000362354)  (-3 , 0.00222626)  (-2.5 , 0.00400616)  (-2 , -0.00319997)  (-1.5 , -0.00503571)  (-1 , 0.070202)  (-0.5 , 0.239317)  (0 , 0.338798)  (0.5 , 0.239317)  (1 , 0.070202)  (1.5 , -0.00503571)  (2 , -0.00319997)  (2.5 , 0.00400616)  (3 , 0.00222626)  (3.5 , 0.000362354)  (4 , 7.46046e-05)  (4.5 , 7.09926e-05)  (5 , 3.26115e-05)   }; 
\end{axis}
\end{tikzpicture}
%\caption{Approximation of the initial data}
%\label{Fig mu+d F1}
%\end{figure}
%
\,\,
%
%----------------------------------------------------------------------
%---------- PDFPLOT AGGRANDI APPROXIMATION DE F PAR F5 F10 F20 ----------%
%\begin{figure}[h]
\centering
\begin{tikzpicture}
\begin{axis}
[scale=.70, 
ymin=-0.015, ymax=0.015,
xlabel={v},
scaled ticks=false,
],
\addplot[ %---------- F ----------
mark=none,
smooth,
color=black,
dashed,
] 
coordinates { (-5 , 2.16678e-05)  (-4.5 , 7.10460e-05)  (-4 , 0.000205578)  (-3.5 , 0.000524963)  (-3 , 0.00118302)  (-2.5 , 0.00235272)  (-2 , 0.00412915)  (-1.5 , 0.00639535)  (-1 , 0.00874141)  (-0.5 , 0.0105442)  (0 , 0.0112242)  (0.5 , 0.0105442)  (1 , 0.00874141)  (1.5 , 0.00639535)  (2 , 0.00412915)  (2.5 , 0.00235272)  (3 , 0.00118302)  (3.5 , 0.000524963)  (4 , 0.000205578)  (4.5 , 7.10460e-05)  (5 , 2.16678e-05)   }; 
\addplot[ %---------- F_20 ----------
mark=none,
smooth,
color=blue,
very thick,
] 
 coordinates { (-5 , 1.14026e-05)  (-4.5 , 0.000108477)  (-4 , 7.35889e-05)  (-3.5 , 0.000963903)  (-3 , -0.000217258)  (-2.5 , 0.00661076)  (-2 , -0.00836896)  (-1.5 , 0.0430698)  (-1 , -0.109922)  (-0.5 , 0.601972)  (0 , 2.43278)  (0.5 , 0.601972)  (1 , -0.109922)  (1.5 , 0.0430698)  (2 , -0.00836896)  (2.5 , 0.00661076)  (3 , -0.000217258)  (3.5 , 0.000963903)  (4 , 7.35889e-05)  (4.5 , 0.000108477)  (5 , 1.14026e-05)   }; 
\addplot[ %---------- F_10 ----------
mark=none,
smooth,
color=red,
] 
coordinates { (-5 , 2.63845e-05)  (-4.5 , 9.80485e-05)  (-4 , 4.38176e-05)  (-3.5 , 0.00083333)  (-3 , 0.00154507)  (-2.5 , -0.00145593)  (-2 , 0.0153269)  (-1.5 , -0.00828102)  (-1 , -0.0230109)  (-0.5 , 0.483845)  (0 , 0.971907)  (0.5 , 0.483845)  (1 , -0.0230109)  (1.5 , -0.00828102)  (2 , 0.0153269)  (2.5 , -0.00145593)  (3 , 0.00154507)  (3.5 , 0.00083333)  (4 , 4.38176e-05)  (4.5 , 9.80485e-05)  (5 , 2.63845e-05)   };
\addplot[ %---------- F_5 ----------
mark=none,
smooth,
color=green,
dotted, very thick,
]
coordinates { (-5 , 3.26115e-05)  (-4.5 , 7.09926e-05)  (-4 , 7.46046e-05)  (-3.5 , 0.000362354)  (-3 , 0.00222626)  (-2.5 , 0.00400616)  (-2 , -0.00319997)  (-1.5 , -0.00503571)  (-1 , 0.070202)  (-0.5 , 0.239317)  (0 , 0.338798)  (0.5 , 0.239317)  (1 , 0.070202)  (1.5 , -0.00503571)  (2 , -0.00319997)  (2.5 , 0.00400616)  (3 , 0.00222626)  (3.5 , 0.000362354)  (4 , 7.46046e-05)  (4.5 , 7.09926e-05)  (5 , 3.26115e-05)   }; 
\end{axis}
\end{tikzpicture}
\caption{Approximation of the initial data.}
\label{Fig mu+d F2}
\end{figure}
Remark that to capture the approximation of the regular part $F_{\text{reg}}$, 
we have to rescale the cote $y$-coordinate. 
We observe the oscillations of $F_N$ which are 
expected since the functions $F_N$ approach the Dirac function when $N$ 
tends to infinity (see the right figure \ref{Fig mu+d F2}).
We now focus on the evolution problem. 
As the initial data is a distribution, we can check that the 
linear part of the solution is singular~:  
\begin{equation}\label{glin sing}
\|g^{\ell in}(t,\cdot)\|_{L^2}^2  
= \sum_{n=2}^\infty   G_n^2 e^{-2\,\lambda_n t }
\approx  \frac{1}{t^{\alpha}} , 
\quad \text{when}\quad t\to 0
\end{equation}
for some $\alpha>0$ 
(since $G_{2n} \approx n^{\frac14}$ and $\lambda_n \approx n^{\frac12}$). 
We next compute the nonlinear part $h_n(t)$ of the solution 
(see the left figure \ref{Fig mu+d CV}).
\begin{figure}[h]
\centering
\begin{tikzpicture}
\begin{axis}
[scale=0.65,
view={-20}{30},
title= {$t \mapsto h_n(t)$ for $n=4,5,\ldots,20$.},
xlabel={$n$},
ylabel={$t$},
scaled ticks=false,
xtick={4,8,12,16,20},
zticklabels={$0$,$.05$,$0.1$,$.15$}, 
]
%\addplot table {data/complexite.dat};
\addplot3 table [mark=none, ]{
%data/mu_plus_delta_t_to_hn.dat
4  	0.000000e+00  	0 
4  	5.000000e-01  	0.614179 
4  	1.000000e+00  	1.14821 
4  	1.500000e+00  	1.61254 
4  	2.000000e+00  	2.01628 
4  	2.500000e+00  	2.36732 
4  	3.000000e+00  	2.67256 
4  	3.500000e+00  	2.93796 
4  	4.000000e+00  	3.16872 
4  	4.500000e+00  	3.36937 
4  	5.000000e+00  	3.54383 
4  	5.500000e+00  	3.69553 
4  	6.000000e+00  	3.82743 
4  	6.500000e+00  	3.94211 
4  	7.000000e+00  	4.04183 
4  	7.500000e+00  	4.12853 
4  	8.000000e+00  	4.20392 
4  	8.500000e+00  	4.26947 
4  	9.000000e+00  	4.32647 
4  	9.500000e+00  	4.37603 
4  	1.000000e+01  	4.41912

5  	0.000000e+00  	0 
5  	5.000000e-01  	0 
5  	1.000000e+00  	0 
5  	1.500000e+00  	0 
5  	2.000000e+00  	0 
5  	2.500000e+00  	0 
5  	3.000000e+00  	0 
5  	3.500000e+00  	0 
5  	4.000000e+00  	0 
5  	4.500000e+00  	0 
5  	5.000000e+00  	0 
5  	5.500000e+00  	0 
5  	6.000000e+00  	0 
5  	6.500000e+00  	0 
5  	7.000000e+00  	0 
5  	7.500000e+00  	0 
5  	8.000000e+00  	0 
5  	8.500000e+00  	0 
5  	9.000000e+00  	0 
5  	9.500000e+00  	0 
5  	1.000000e+01  	0

6  	0.000000e+00  	-3.28634e-09 
6  	5.000000e-01  	1.10211 
6  	1.000000e+00  	2.00915 
6  	1.500000e+00  	2.69625 
6  	2.000000e+00  	3.19198 
6  	2.500000e+00  	3.53841 
6  	3.000000e+00  	3.77518 
6  	3.500000e+00  	3.93437 
6  	4.000000e+00  	4.04009 
6  	4.500000e+00  	4.10963 
6  	5.000000e+00  	4.15502 
6  	5.500000e+00  	4.18447 
6  	6.000000e+00  	4.20347 
6  	6.500000e+00  	4.21569 
6  	7.000000e+00  	4.22352 
6  	7.500000e+00  	4.22852 
6  	8.000000e+00  	4.23171 
6  	8.500000e+00  	4.23374 
6  	9.000000e+00  	4.23503 
6  	9.500000e+00  	4.23584 
6  	1.000000e+01  	4.23636

7  	0.000000e+00  	0 
7  	5.000000e-01  	0 
7  	1.000000e+00  	0 
7  	1.500000e+00  	0 
7  	2.000000e+00  	0 
7  	2.500000e+00  	0 
7  	3.000000e+00  	0 
7  	3.500000e+00  	0 
7  	4.000000e+00  	0 
7  	4.500000e+00  	0 
7  	5.000000e+00  	0 
7  	5.500000e+00  	0 
7  	6.000000e+00  	0 
7  	6.500000e+00  	0 
7  	7.000000e+00  	0 
7  	7.500000e+00  	0 
7  	8.000000e+00  	0 
7  	8.500000e+00  	0 
7  	9.000000e+00  	0 
7  	9.500000e+00  	0 
7  	1.000000e+01  	0

8  	0.000000e+00  	3.46334e-09 
8  	5.000000e-01  	1.55959 
8  	1.000000e+00  	2.77329 
8  	1.500000e+00  	3.58313 
8  	2.000000e+00  	4.08058 
8  	2.500000e+00  	4.3718 
8  	3.000000e+00  	4.53746 
8  	3.500000e+00  	4.63007 
8  	4.000000e+00  	4.68131 
8  	4.500000e+00  	4.70949 
8  	5.000000e+00  	4.72494 
8  	5.500000e+00  	4.73339 
8  	6.000000e+00  	4.73802 
8  	6.500000e+00  	4.74055 
8  	7.000000e+00  	4.74193 
8  	7.500000e+00  	4.74269 
8  	8.000000e+00  	4.7431 
8  	8.500000e+00  	4.74333 
8  	9.000000e+00  	4.74345 
8  	9.500000e+00  	4.74352 
8  	1.000000e+01  	4.74356

9  	0.000000e+00  	0 
9  	5.000000e-01  	0 
9  	1.000000e+00  	0 
9  	1.500000e+00  	0 
9  	2.000000e+00  	0 
9  	2.500000e+00  	0 
9  	3.000000e+00  	0 
9  	3.500000e+00  	0 
9  	4.000000e+00  	0 
9  	4.500000e+00  	0 
9  	5.000000e+00  	0 
9  	5.500000e+00  	0 
9  	6.000000e+00  	0 
9  	6.500000e+00  	0 
9  	7.000000e+00  	0 
9  	7.500000e+00  	0 
9  	8.000000e+00  	0 
9  	8.500000e+00  	0 
9  	9.000000e+00  	0 
9  	9.500000e+00  	0 
9  	1.000000e+01  	0

10  	0.000000e+00  	-4.38178e-09 
10  	5.000000e-01  	2.00343 
10  	1.000000e+00  	3.49025 
10  	1.500000e+00  	4.38617 
10  	2.000000e+00  	4.87485 
10  	2.500000e+00  	5.12877 
10  	3.000000e+00  	5.25775 
10  	3.500000e+00  	5.32263 
10  	4.000000e+00  	5.35517 
10  	4.500000e+00  	5.37149 
10  	5.000000e+00  	5.37967 
10  	5.500000e+00  	5.38378 
10  	6.000000e+00  	5.38584 
10  	6.500000e+00  	5.38688 
10  	7.000000e+00  	5.38741 
10  	7.500000e+00  	5.38767 
10  	8.000000e+00  	5.3878 
10  	8.500000e+00  	5.38787 
10  	9.000000e+00  	5.3879 
10  	9.500000e+00  	5.38792 
10  	1.000000e+01  	5.38793

11  	0.000000e+00  	0 
11  	5.000000e-01  	0 
11  	1.000000e+00  	0 
11  	1.500000e+00  	0 
11  	2.000000e+00  	0 
11  	2.500000e+00  	0 
11  	3.000000e+00  	0 
11  	3.500000e+00  	0 
11  	4.000000e+00  	0 
11  	4.500000e+00  	0 
11  	5.000000e+00  	0 
11  	5.500000e+00  	0 
11  	6.000000e+00  	0 
11  	6.500000e+00  	0 
11  	7.000000e+00  	0 
11  	7.500000e+00  	0 
11  	8.000000e+00  	0 
11  	8.500000e+00  	0 
11  	9.000000e+00  	0 
11  	9.500000e+00  	0 
11  	1.000000e+01  	0

12  	0.000000e+00  	4.90453e-09 
12  	5.000000e-01  	2.44005 
12  	1.000000e+00  	4.18237 
12  	1.500000e+00  	5.15244 
12  	2.000000e+00  	5.63882 
12  	2.500000e+00  	5.87265 
12  	3.000000e+00  	5.98343 
12  	3.500000e+00  	6.03572 
12  	4.000000e+00  	6.06042 
12  	4.500000e+00  	6.0721 
12  	5.000000e+00  	6.07763 
12  	5.500000e+00  	6.08025 
12  	6.000000e+00  	6.08149 
12  	6.500000e+00  	6.08208 
12  	7.000000e+00  	6.08237 
12  	7.500000e+00  	6.0825 
12  	8.000000e+00  	6.08256 
12  	8.500000e+00  	6.08259 
12  	9.000000e+00  	6.08261 
12  	9.500000e+00  	6.08261 
12  	1.000000e+01  	6.08262

13  	0.000000e+00  	0 
13  	5.000000e-01  	0 
13  	1.000000e+00  	0 
13  	1.500000e+00  	0 
13  	2.000000e+00  	0 
13  	2.500000e+00  	0 
13  	3.000000e+00  	0 
13  	3.500000e+00  	0 
13  	4.000000e+00  	0 
13  	4.500000e+00  	0 
13  	5.000000e+00  	0 
13  	5.500000e+00  	0 
13  	6.000000e+00  	0 
13  	6.500000e+00  	0 
13  	7.000000e+00  	0 
13  	7.500000e+00  	0 
13  	8.000000e+00  	0 
13  	8.500000e+00  	0 
13  	9.000000e+00  	0 
13  	9.500000e+00  	0 
13  	1.000000e+01  	0

14  	0.000000e+00  	2.90073e-09 
14  	5.000000e-01  	2.87302 
14  	1.000000e+00  	4.8623 
14  	1.500000e+00  	5.90453 
14  	2.000000e+00  	6.39673 
14  	2.500000e+00  	6.62123 
14  	3.000000e+00  	6.72271 
14  	3.500000e+00  	6.76855 
14  	4.000000e+00  	6.78929 
14  	4.500000e+00  	6.79868 
14  	5.000000e+00  	6.80294 
14  	5.500000e+00  	6.80487 
14  	6.000000e+00  	6.80575 
14  	6.500000e+00  	6.80615 
14  	7.000000e+00  	6.80633 
14  	7.500000e+00  	6.80641 
14  	8.000000e+00  	6.80645 
14  	8.500000e+00  	6.80647 
14  	9.000000e+00  	6.80647 
14  	9.500000e+00  	6.80648 
14  	1.000000e+01  	6.80648

15  	0.000000e+00  	0 
15  	5.000000e-01  	0 
15  	1.000000e+00  	0 
15  	1.500000e+00  	0 
15  	2.000000e+00  	0 
15  	2.500000e+00  	0 
15  	3.000000e+00  	0 
15  	3.500000e+00  	0 
15  	4.000000e+00  	0 
15  	4.500000e+00  	0 
15  	5.000000e+00  	0 
15  	5.500000e+00  	0 
15  	6.000000e+00  	0 
15  	6.500000e+00  	0 
15  	7.000000e+00  	0 
15  	7.500000e+00  	0 
15  	8.000000e+00  	0 
15  	8.500000e+00  	0 
15  	9.000000e+00  	0 
15  	9.500000e+00  	0 
15  	1.000000e+01  	0

16  	0.000000e+00  	6.88468e-09 
16  	5.000000e-01  	3.30468 
16  	1.000000e+00  	5.53794 
16  	1.500000e+00  	6.65442 
16  	2.000000e+00  	7.1592 
16  	2.500000e+00  	7.3809 
16  	3.000000e+00  	7.47768 
16  	3.500000e+00  	7.51994 
16  	4.000000e+00  	7.53842 
16  	4.500000e+00  	7.54651 
16  	5.000000e+00  	7.55006 
16  	5.500000e+00  	7.55162 
16  	6.000000e+00  	7.5523 
16  	6.500000e+00  	7.5526 
16  	7.000000e+00  	7.55273 
16  	7.500000e+00  	7.55279 
16  	8.000000e+00  	7.55281 
16  	8.500000e+00  	7.55282 
16  	9.000000e+00  	7.55283 
16  	9.500000e+00  	7.55283 
16  	1.000000e+01  	7.55283

17  	0.000000e+00  	0 
17  	5.000000e-01  	0 
17  	1.000000e+00  	0 
17  	1.500000e+00  	0 
17  	2.000000e+00  	0 
17  	2.500000e+00  	0 
17  	3.000000e+00  	0 
17  	3.500000e+00  	0 
17  	4.000000e+00  	0 
17  	4.500000e+00  	0 
17  	5.000000e+00  	0 
17  	5.500000e+00  	0 
17  	6.000000e+00  	0 
17  	6.500000e+00  	0 
17  	7.000000e+00  	0 
17  	7.500000e+00  	0 
17  	8.000000e+00  	0 
17  	8.500000e+00  	0 
17  	9.000000e+00  	0 
17  	9.500000e+00  	0 
17  	1.000000e+01  	0

18  	0.000000e+00  	5.58496e-09 
18  	5.000000e-01  	3.7367 
18  	1.000000e+00  	6.21453 
18  	1.500000e+00  	7.409 
18  	2.000000e+00  	7.93163 
18  	2.500000e+00  	8.15461 
18  	3.000000e+00  	8.24931 
18  	3.500000e+00  	8.28954 
18  	4.000000e+00  	8.30665 
18  	4.500000e+00  	8.31394 
18  	5.000000e+00  	8.31705 
18  	5.500000e+00  	8.31837 
18  	6.000000e+00  	8.31893 
18  	6.500000e+00  	8.31917 
18  	7.000000e+00  	8.31928 
18  	7.500000e+00  	8.31932 
18  	8.000000e+00  	8.31934 
18  	8.500000e+00  	8.31935 
18  	9.000000e+00  	8.31935 
18  	9.500000e+00  	8.31935 
18  	1.000000e+01  	8.31935

19  	0.000000e+00  	0 
19  	5.000000e-01  	0 
19  	1.000000e+00  	0 
19  	1.500000e+00  	0 
19  	2.000000e+00  	0 
19  	2.500000e+00  	0 
19  	3.000000e+00  	0 
19  	3.500000e+00  	0 
19  	4.000000e+00  	0 
19  	4.500000e+00  	0 
19  	5.000000e+00  	0 
19  	5.500000e+00  	0 
19  	6.000000e+00  	0 
19  	6.500000e+00  	0 
19  	7.000000e+00  	0 
19  	7.500000e+00  	0 
19  	8.000000e+00  	0 
19  	8.500000e+00  	0 
19  	9.000000e+00  	0 
19  	9.500000e+00  	0 
19  	1.000000e+01  	0

20  	0.000000e+00  	2.76690e-08 
20  	5.000000e-01  	4.17033 
20  	1.000000e+00  	6.8957 
20  	1.500000e+00  	8.1725 
20  	2.000000e+00  	8.71709 
20  	2.500000e+00  	8.94411 
20  	3.000000e+00  	9.03834 
20  	3.500000e+00  	9.07747 
20  	4.000000e+00  	9.09373 
20  	4.500000e+00  	9.10049 
20  	5.000000e+00  	9.1033 
20  	5.500000e+00  	9.10447 
20  	6.000000e+00  	9.10496 
20  	6.500000e+00  	9.10517 
20  	7.000000e+00  	9.10525 
20  	7.500000e+00  	9.10529 
20  	8.000000e+00  	9.1053 
20  	8.500000e+00  	9.10531 
20  	9.000000e+00  	9.10531 
20  	9.500000e+00  	9.10531 
20  	1.000000e+01  	9.10531 
};
\end{axis}
\end{tikzpicture}
\quad
\begin{tikzpicture}
\begin{axis}
%\begin{semilogyaxis}
[scale=0.65,
title={$\frac{\sup_t |h_n(t)|}{|G_n|}$ for $n=4,6,\ldots,20$.},
%title=$n \mapsto \frac{\sup_t |h_n(t)|}{|G_n|}$,
xlabel={n},
]
\addplot coordinates { (4 , 3)  (6 , 2.47432)  (8 , 2.59618)  (10 , 2.801)  (12 , 3.03015)  (14 , 3.2695)  (16 , 3.51452)  (18 , 3.76359)  (20 , 4.0161)   }; 
\end{axis}
%\end{semilogyaxis}
\end{tikzpicture}
\caption{Behavior of the nonlinear part $h_n$.}
\label{Fig mu+d CV}
\end{figure}
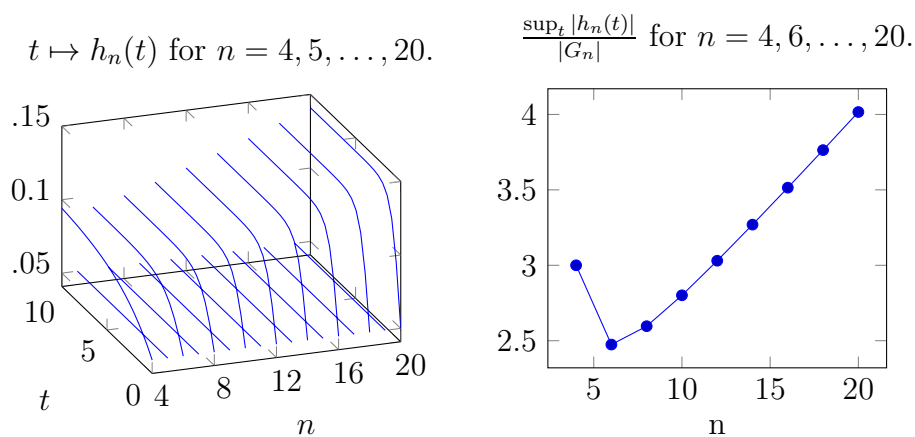

We observe some numerical evidences that these functions 
are increasing less than a power of $n$ : 
\[\sup_{t\geq0} |h_n(t)| \leq C \, n^{a}\]   
with $a$ close to 1. 
Since $G_n \approx n^{\frac14}$, 
the behavior of a term of the series $(g^{\ell in}_N(t) + g^{n\ell}_N(t))$ 
is dominated by the nonlinear part. 

We next calculate the linear part and nonlinear part of solution 
(see Figure \ref{Fig mu+d CV 1}). 
%
%
% SORTIE PDFPLOT COMPARAISON glin ET Gnonlin
\begin{figure}[h]
\centering
\begin{tikzpicture}
\begin{axis}
[scale=0.65, 
xlabel={t},
scaled ticks=false,
legend entries={$\|g^{\ell in}_{20}\|_{L^2}$,$\|g^{n\ell}_{20}\|_{L^2}$},]
\addplot [mark=none, smooth, thick, ]  % g LINEAIRE
coordinates 
{ (0.25 , 1.00943)  (0.3 , 0.812572)  (0.35 , 0.670672)  (0.4 , 0.563324)  (0.45 , 0.478992)  (0.5 , 0.410835)  (0.55 , 0.35458)  (0.6 , 0.307422)  (0.65 , 0.267433)  (0.7 , 0.233229)  (0.75 , 0.203785)  (0.8 , 0.178315)  (0.85 , 0.1562)  (0.9 , 0.136943)  (0.95 , 0.12014)  (1 , 0.105452)  }; 
\addplot [mark=none, smooth, thick, dashed]  % g NON LINEAIRE
coordinates 
{ (0 , 2.31255e-08)  (0.05 , 0.450922)  (0.1 , 0.519454)  (0.15 , 0.458026)  (0.2 , 0.368128)  (0.25 , 0.285534)  (0.3 , 0.219308)  (0.35 , 0.168889)  (0.4 , 0.131107)  (0.45 , 0.102734)  (0.5 , 0.0812048)  (0.55 , 0.0646533)  (0.6 , 0.0517662)  (0.65 , 0.041622)  (0.7 , 0.0335663)  (0.75 , 0.0271255)  (0.8 , 0.0219498)  (0.85 , 0.0177754)  (0.9 , 0.0143998)  (0.95 , 0.0116656)  (1 , 0.00944851)    }; 
\end{axis}
\end{tikzpicture}
\,\,
\centering
\begin{tikzpicture}
\begin{axis}
[scale=0.65,
%title={$R_N(t)$},
%title={Non linear over linear.},
%title={$R_N(t) = \frac
%       {\sum_{n=0}^N |e^{-\lambda_n t}h_n(t)|^2}
%       {\sum_{n=0}^N |e^{-\lambda_n t}G_n|^2}$},
xlabel={t},
scaled ticks=false,
legend entries={$R_{20}$,$R_{10}$,$R_{5}$,$\tilde{R}$},]
\addplot[ % N=20
mark=none,
smooth,
color=blue, very thick,
] 
coordinates 
{ (0 , 3.30467e-09)  (0.08 , 0.176298)  (0.16 , 0.268237)  (0.24 , 0.284267)  (0.32 , 0.262983)  (0.4 , 0.232738)  (0.48 , 0.204203)  (0.56 , 0.179444)  (0.64 , 0.158098)  (0.72 , 0.139493)  (0.8 , 0.123096)  (0.88 , 0.108536)  (0.96 , 0.0955575)  (1.04 , 0.0839716)  (1.12 , 0.0736333)  (1.2 , 0.0644228)  (1.28 , 0.0562356)  (1.36 , 0.0489778)  (1.44 , 0.0425624)  (1.52 , 0.0369087)  (1.6 , 0.0319409)  (1.68 , 0.0275884)  (1.76 , 0.0237854)  (1.84 , 0.0204714)  (1.92 , 0.0175906)  (2 , 0.0150923)   }; 
\addplot[   % N=10
mark=none,
smooth,
color=red,
] 
coordinates 
{ (0 , 1.26335e-09)  (0.08 , 0.109089)  (0.16 , 0.18159)  (0.24 , 0.21679)  (0.32 , 0.223399)  (0.4 , 0.213218)  (0.48 , 0.195565)  (0.56 , 0.175876)  (0.64 , 0.15669)  (0.72 , 0.138954)  (0.8 , 0.122894)  (0.88 , 0.108462)  (0.96 , 0.0955302)  (1.04 , 0.0839617)  (1.12 , 0.0736298)  (1.2 , 0.0644215)  (1.28 , 0.0562352)  (1.36 , 0.0489776)  (1.44 , 0.0425624)  (1.52 , 0.0369087)  (1.6 , 0.0319409)  (1.68 , 0.0275884)  (1.76 , 0.0237854)  (1.84 , 0.0204714)  (1.92 , 0.0175906)  (2 , 0.0150923)   }; 
\addplot[     % N=5
mark=none,
smooth,
color=green, dotted, very thick,
]
coordinates { (0 , 0)  (0.08 , 0.0458135)  (0.16 , 0.0816415)  (0.24 , 0.107404)  (0.32 , 0.123833)  (0.4 , 0.132264)  (0.48 , 0.134321)  (0.56 , 0.131639)  (0.64 , 0.125674)  (0.72 , 0.117622)  (0.8 , 0.108408)  (0.88 , 0.0987095)  (0.96 , 0.0890054)  (1.04 , 0.0796169)  (1.12 , 0.0707477)  (1.2 , 0.062516)  (1.28 , 0.0549789)  (1.36 , 0.0481516)  (1.44 , 0.0420206)  (1.52 , 0.0365541)  (1.6 , 0.0317094)  (1.68 , 0.0274375)  (1.76 , 0.0236873)  (1.84 , 0.0204077)  (1.92 , 0.0175493)  (2 , 0.0150655) }; 
\addplot coordinates 
{ (0 , 0)  (0.1 , 0.0752025)  (0.2 , 0.117963)  (0.3 , 0.138786)  (0.4 , 0.145152)  (0.5 , 0.142331)  (0.6 , 0.133991)  (0.7 , 0.122644)  (0.8 , 0.109974)  (0.9 , 0.0970777)  (1 , 0.0846417)  (1.1 , 0.0730655)  (1.2 , 0.0625553)  (1.3 , 0.0531884)  (1.4 , 0.0449595)  (1.5 , 0.0378122)  (1.6 , 0.0316618)  (1.7 , 0.0264099)  (1.8 , 0.0219544)  (1.9 , 0.0181954)  (2 , 0.0150392)   };
\end{axis}
\end{tikzpicture}
\caption{Comparaison of the non linear part with respect to the linear part 
(see \eqref{gln N=}, \eqref{gnl N=}, \eqref{RN=}, \eqref{Rn approx}).}
\label{Fig mu+d CV 1}
\end{figure}
The numerical computations in the left figure 
show that $g^{n\ell}_N(t)$ is a regular function for all time
and we verify also that $g^{\ell in}_N(t)$ 
is singular as $t\to0$ as pointed in \eqref{glin sing}.
For a large time, the $L^2$ norm of the linear part ($\sim e^{-\lambda_2\,t}$)
dominates the norm of the nonlinear part ($\sim e^{-\lambda_4\,t}$), 
and the ration $R_N(t)$ has the same behavior as in the previous section
(see \eqref{Rn approx}). 

We then compute the numerical approximation $f_N$ of the solution $f$ for $N=20$ 
and we check that the solution behaves as a Dirac function as $t\to0$
and tends to the Gaussian as $t\to\infty$ (see Figure  \ref{Fig  mu+d F}).
\begin{figure}[h]
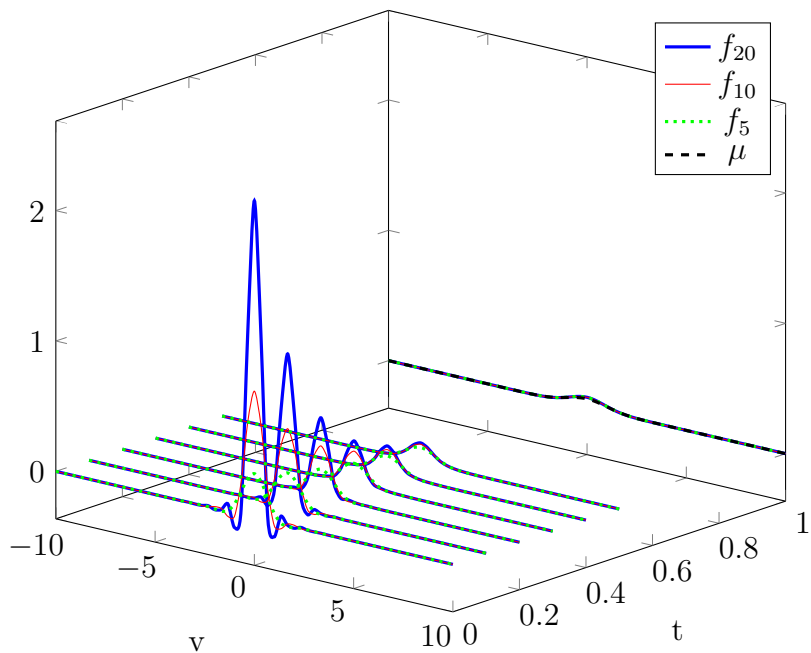

\centering
% [inline block 1: 1 envs, 40471 chars -> data_tex | \begin{tikzpicture} \begin{axis}...]

\caption{Graph of $(t,v) \mapsto f_{N}(t,v)$ for $N=5$, $10$, $20$ and $\mu(v)$.}
\label{Fig  mu+d F}
%=\mu(v)+\sqrt{\mu(v)} \sum_{n=0}^{N} e^{-\lambda_n t}(G_n+h_n(t))$.
\end{figure}

Since $\lambda_n \approx c\,\sqrt{n}$ and
if the behavior of $\sup_{t\geq0} |h_n(t)|$ is dominated by a power of $n$
(which is numerically verified), 
then we have for some $b, \gamma>0$ : 
\[\forall t>0,\,\,
 \|f_N(t,\cdot) - f(t,\cdot) \|_{L^2} 
\lesssim \frac{1}{t^b}  \, e^{ - \gamma \sqrt{N} \, t}
\to 0 \quad\text{as}\quad N\to\infty. \]
We observe some other numerical evidences that the series 
converges in $L^2$ for $t>0$
and the solution converges to a Gaussian as $t\to\infty$.

%====================================================================
% SECTION  CONCLUSION
%====================================================================

\section{Conclusion}
We have considered the perturbation $g$ of the solution $f$
of the Boltzmann equation defined by 
$$f = \mu + \sqrt{\mu} g \quad\text{where}\quad
g(t,v) = {\sum_n} g_n(t) \varphi_n(v)$$ 
and we have studied the behavior of the spectral coefficients 
$$g_n(t)=  e^{-\lambda_n t}(G_n+h_n(t)), \quad g_n(0) = G_n. $$
We have then computed formally the spectral coefficients $h_n(t)$ 
for $n=0,1,\ldots,N$ with $N=20$. 
We have checked also the results for small $L^2$ initial data 
and distribution type initial data $\mu + \delta$.

$\bullet$ For small $L^2$ initial data,
our method was tested with several $L^2$ initial conditions : 
F is a sum of two Gaussian, $G_n = \frac{0.1}{n}$, $G_n = \frac{1}{n}$.
The results show that there are some numerical evidences that the spectral series 
$\sum_n e^{-\lambda_n t}(G_n+h_n(t)) \varphi_n$ is convergent in $L^2$ for any time $t\geq 0$ 
and the solution converges to a Gaussian. 
Moreover, for large times, the linear part $G_n$ is preponderant with respect 
to the non-linear part $h_n(t)$.

$\bullet$ For the distribution type initial data $\mu + \delta$,
the simulations show some numerical evidences that the spectral series converges in $L^2$ 
and there is a regularization of the solution for $t> 0$.

\medskip

We have computed the formal solutions of the spectral coefficients $h_n$ of the solution 
of the Boltzmann equation. If there exists a regular solution for $t> 0$, 
then the solutions $h_n$ are the exact projections of the solution on the spectral basis.
These calculations were made in the case of a non-cutoff kernel.
The numerical results are coherent for small $L^2$ initial data 
or for the distribution case $\mu + \delta$.  
There is conservation of the mass, momentum and energy of the approximated solution
(since $g_N(t,\cdot)$ is orthogonal to the kernel $\mathcal{N}$ for all time).
Moreover the approximated solution $f_N(t,\cdot)$ (defined in \eqref{fN=}) 
converges to a Gaussian when $t$ tends to infinity.

\section*{Acknowledgments}
The authors wish to thank Chao-Jiang Xu for interesting discussions.

%====================================================================
% SECTION  ANNEXE
%====================================================================

\section{Appendix}

\subsection{Rescaling of the solution}\label{appendix rescaling}
We consider a radial solution $\tilde{f}(s,w)$ of the Boltzmann equation :
\begin{equation*}%\label{eq Boltz tilde f}
\left\{
\begin{array}{ll}
   \partial_s \tilde{f} = {\bf {Q}}(\tilde{f} ,\tilde{f} ),\\
 \tilde{f} |_{t=0}=\tilde{F} .
\end{array}
\right.
\end{equation*}

\begin{lemma}\label{lem G ortho N}
We consider the functions $f(t,v)$ and $F(v)$ defined by the change of variable 
\begin{equation*}%\label{eq Boltz tilde f}
\left\{
\begin{array}{ll}
   f(t,v) =  \alpha \tilde{f}(\frac{\alpha}{\beta^3} t, \beta v),\\
    F(v) = \alpha \tilde{F}(\beta v) 
\end{array}
\right.
\end{equation*} 
where
\begin{equation*}
\alpha = 
\frac{\left( \frac13 \int_{\mathbb{R}^3}w^2\,\tilde{F}(w)\,dw \right)^{\frac32} }
      {  \left(\int_{\mathbb{R}^3}\tilde{F}(w)\,dw\right)^{\frac52}} 
\quad\text{and}\quad
\beta = 
\frac{\left(\frac13  \int_{\mathbb{R}^3}w^2\,\tilde{F}(w)\,dw \right)^{\frac12} }
      { \left(\int_{\mathbb{R}^3}\tilde{F}(w)\,dw\right)^{\frac12}}.
\end{equation*} 
Therefore $f(t,v)$ is a solution of the Boltzmann equation \eqref{eq Boltz f} 
with initial data $F$. Moreover, if we set $F = \mu + \sqrt{\mu} \, G$,
we then have $G\in \mathcal{N}^\perp$.
\end{lemma}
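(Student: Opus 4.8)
The plan is to split the argument into two unrelated pieces: first, that the prescribed change of variables maps solutions of the homogeneous Boltzmann equation to solutions, which follows from the dilation covariance of the Maxwellian collision operator; and second, that the particular values of $\alpha$ and $\beta$ normalize the mass and the energy of $F$ to those of $\mu$, which is exactly what forces $G\in\mathcal N^\perp$.

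For the first part I would begin by recording how ${\bf Q}$ transforms under a dilation. Writing $h_\beta(v):=h(\beta v)$ for $\beta>0$, the point is that for Maxwellian molecules the kernel $B(v-v_*,\sigma)=b(\cos\theta)$ depends neither on $|v-v_*|$ nor on any length scale, and that the map $v_*\mapsto\beta v_*$ sends the pre/post-collisional pair $(v',v_*')$ attached to $(v,v_*)$ to the pair attached to $(\beta v,\beta v_*)$, since $|\beta(v-v_*)|=\beta|v-v_*|$ and the deviation angle is scale invariant. Performing the change of variables $w_*=\beta v_*$ in the $v_*$-integral then gives
\[
{\bf Q}(g_\beta,h_\beta)(v)=\beta^{-3}\,\bigl({\bf Q}(g,h)\bigr)(\beta v).
\]
With this in hand I would differentiate $f(t,v)=\alpha\,\tilde f(\tfrac{\alpha}{\beta^3}t,\beta v)$ in $t$, use $\partial_s\tilde f={\bf Q}(\tilde f,\tilde f)$, and compare with ${\bf Q}(f(t,\cdot),f(t,\cdot))$ computed via bilinearity and the covariance identity above: both equal $\tfrac{\alpha^2}{\beta^3}\bigl({\bf Q}(\tilde f,\tilde f)\bigr)(\tfrac{\alpha}{\beta^3}t,\beta v)$. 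The initial condition is immediate: $f(0,v)=\alpha\tilde F(\beta v)=F(v)$.

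For the second part I would use $G=(F-\mu)/\sqrt\mu$, so that $\langle G,\sqrt\mu\,P\rangle_{L^2}=\int_{\mathbb R^3}(F-\mu)\,P\,dv$ for any polynomial $P$. Since $\tilde F$ is radial, $F$ is radial and $\langle G,\sqrt\mu\,v_i\rangle=0$ for $i=1,2,3$ by parity, so only $P=1$ and $P=|v|^2$ remain. A change of variables $w=\beta v$ gives $\int_{\mathbb R^3}F\,dv=\alpha\beta^{-3}\int_{\mathbb R^3}\tilde F\,dw$ and $\int_{\mathbb R^3}|v|^2F\,dv=\alpha\beta^{-5}\int_{\mathbb R^3}|w|^2\tilde F\,dw$; the definitions of $\alpha$ and $\beta$ are arranged exactly so that $\alpha\beta^{-3}=\bigl(\int_{\mathbb R^3}\tilde F\,dw\bigr)^{-1}$ and $\alpha\beta^{-5}=\bigl(\tfrac13\int_{\mathbb R^3}|w|^2\tilde F\,dw\bigr)^{-1}$, whence $\int_{\mathbb R^3}F\,dv=1=\int_{\mathbb R^3}\mu\,dv$ and $\int_{\mathbb R^3}|v|^2F\,dv=3=\int_{\mathbb R^3}|v|^2\mu\,dv$. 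Therefore $\langle G,\sqrt\mu\rangle=\langle G,\sqrt\mu\,|v|^2\rangle=0$, and since $\mathcal N=\text{span}\{\sqrt\mu,\sqrt\mu\,v_1,\sqrt\mu\,v_2,\sqrt\mu\,v_3,\sqrt\mu\,|v|^2\}$ this gives $G\in\mathcal N^\perp$.

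I expect the only genuinely delicate step to be the dilation covariance of ${\bf Q}$, namely getting the Jacobian factor $\beta^{-3}$ right and checking that nothing in the angular integration introduces a hidden scale; everything else is bookkeeping of the exponents of $\alpha$ and $\beta$ together with elementary Gaussian moments. If one wants the statement to cover the measure-valued initial data of Section~5, the moment identities above should be read distributionally, and one must assume the zeroth and second moments of $\tilde F$ are finite and nonzero so that $\alpha$ and $\beta$ are well defined.
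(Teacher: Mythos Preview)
Your proof is correct and follows essentially the same approach as the paper's own proof. The paper dismisses the first part with ``it is easy to check that $f(t,v)$ is a solution of the Boltzmann equation'' and for the second part reduces, exactly as you do, to verifying $\int F\,dv=1$ and $\int |v|^2 F\,dv=3$ via the change of variable $w=\beta v$; your additional detail on the dilation covariance ${\bf Q}(g_\beta,h_\beta)(v)=\beta^{-3}\bigl({\bf Q}(g,h)\bigr)(\beta v)$ and the explicit check that $\alpha\beta^{-3}=M_0^{-1}$, $\alpha\beta^{-5}=3/M_2$ simply fills in what the paper leaves to the reader.
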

\begin{remark}
If $F$ is such that 
\begin{align*}
&\int_{\mathbb{R}^3}F(v)\,dv
=\int_{\mathbb{R}^3}\mu(v)\,dv = 1,
\\
&\int_{\mathbb{R}^3}v^2\,F(v)\,dv
=\int_{\mathbb{R}^3}v^2\,\mu(v)\,dv = 3 , 
\end{align*}
then the function $G$ defined by $G = \frac{1}{\sqrt{\mu}}(F-\mu)$ 
belongs to $\mathcal{N}^\perp$.
\end{remark}

\begin{proof}
It is easy to check that $f(t,v)$ is a solution of the Boltzmann equation.
Since $G$ is a radial function, it is enough to check that
\[ \Big(G , \sqrt{\mu}\Big)_{L^2}  = \Big(G , |v|^2\sqrt{\mu}\Big)_{L^2} = 0. \]
Recalling that  $\Big(\varphi_p ,  \varphi_q \Big)_{L^2} = \delta_{pq}$ ,
\[ \varphi_0 = \sqrt{\mu} \quad\text{and}\quad \varphi_1 = 6^{-\frac12} (3-|v|^2) \sqrt{\mu}, \]
it is equivalent to prove
\begin{equation*}%\label{rel ortho}
  (F/\sqrt{\mu},\varphi_0)_{L^2}=1 \quad\text{and}\quad (F/\sqrt{\mu},\varphi_1)_{L^2}=0,
\end{equation*}
which gives the equations
\begin{equation*}
\left\{
\begin{array}{ll}
  \int_{\mathbb{R}^3} F(v) \, dv = \int_{\mathbb{R}^3} \mu(v) \, dv = 1 ,\\
  \int_{\mathbb{R}^3} |v|^2 \,  F(v) \, dv = \int_{\mathbb{R}^3} |v|^2 \, \mu(v) \, dv = 3.
\end{array}
\right.
\end{equation*} 
Using the change of variable $w = \beta \, v$, 
we can check that if we set the values of $\alpha$ and $\beta$ given in the lemma,
the previous equations are fulfilled.
\end{proof}

\subsection{Measure initial data}
We define the following distribution initial data : 
\begin{equation*}
\tilde{F} = \mu + \delta . 
\end{equation*}
Following the rescaling of lemma \ref{lem G ortho N}, 
we compute
\begin{align*}
\langle \tilde{F} , 1 \rangle 
  &= \int_{\mathbb{R}^3} \mu(v) \,1\,dv + \langle \delta , 1 \rangle = 2 , 
\\
\langle \tilde{F} , v^2 \rangle 
  &= \frac13 \int_{\mathbb{R}^3} \mu(v) \, v^2 \,dv + \langle \delta , v^2 \rangle = 1  
\end{align*}
and then $\alpha=2^{-\frac52}$ and $\beta=2^{-\frac12}$.
Using the change of variable $w=\beta v$, 
we get the new rescaled distribution initial data 
\begin{equation*}
F = \alpha \, \tilde{F} \circ (\beta \, {\rm Id}) 
= 2^{-\frac52} \left( \mu(2^{-\frac12} \cdot ) + (2^{\frac12})^3 \delta \right).
\end{equation*}
%

%------------------ Prop mu + Delta ----------------
\begin{proposition}\label{prop F=mu+d}
We consider the initial data 
\begin{equation*}%\label{F=mu+d}
F = 2^{-\frac52} \left( \mu(2^{-\frac12} \cdot ) + (2^{\frac12})^3 \delta \right) 
\end{equation*}
and we set $G$ such that $F = \mu + \sqrt{\mu} G$.
Then we have
\begin{align*}%\label{G}
G &= - \sqrt{\mu} + 2^{-\frac{13}{4}} \pi^{-\frac34} 
+ 2^{-\frac{1}{4}} \pi^{\frac34} \delta.
\end{align*} 
We consider the coordinates $G_n = \langle G , \varphi_n \rangle$
of the distribution $G$ in the spectral basis $(\varphi_n)_n$.
We can check that 
\[ G_0   = G_1   = 0\]
and for all integer $n\geq 2$,
\begin{equation}\label{Gn=}
G_{n} =  \langle G , \varphi_n \rangle
= \frac{1+(-1)^n}{2} \, 
\left( \frac{(2\,n+1)!}{2^{2n}(n!)^2}\right)^{\frac12}.
\end{equation}
\end{proposition}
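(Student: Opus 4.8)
The plan is to prove the statement in two stages: first identify $G=(F-\mu)/\sqrt{\mu}$ explicitly as a tempered distribution, then pair it against each Hermite eigenfunction $\varphi_n$. For the first stage, I would write $F=2^{-5/2}\mu(2^{-1/2}\,\cdot\,)+2^{-1}\delta$ (using $2^{-5/2}(2^{1/2})^3=2^{-1}$) together with $\mu(v)=(2\pi)^{-3/2}e^{-|v|^2/2}$, and divide by $\sqrt{\mu(v)}=(2\pi)^{-3/4}e^{-|v|^2/4}$ term by term. The rescaled Gaussian satisfies $\mu(2^{-1/2}v)=(2\pi)^{-3/2}e^{-|v|^2/4}$, so the exponentials cancel and $2^{-5/2}\mu(2^{-1/2}v)/\sqrt{\mu(v)}=2^{-5/2}(2\pi)^{-3/4}=2^{-13/4}\pi^{-3/4}$; the middle term gives $-\mu/\sqrt{\mu}=-\sqrt{\mu}$; and since multiplying the Dirac mass by the smooth positive function $1/\sqrt{\mu}$ only sees the value $1/\sqrt{\mu(0)}=(2\pi)^{3/4}$, the last term equals $2^{-1}(2\pi)^{3/4}\delta=2^{-1/4}\pi^{3/4}\delta$. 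Summing yields the claimed expression for $G$, which is a tempered distribution, so $\langle G,\varphi_n\rangle$ makes sense for every Schwartz $\varphi_n$.

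\textbf{Second stage (the coefficients).} Using $\varphi_0=\sqrt{\mu}$ and $\langle\varphi_p,\varphi_q\rangle=\delta_{pq}$ gives $\langle\sqrt{\mu},\varphi_n\rangle=\delta_{0n}$, hence $G_n=-\delta_{0n}+2^{-13/4}\pi^{-3/4}\int_{\mathbb{R}^3}\varphi_n(v)\,dv+2^{-1/4}\pi^{3/4}\varphi_n(0)$. The two remaining ingredients are read off from \eqref{phi=}. For the pointwise value, $L^{[1/2]}_n(0)=\binom{n+1/2}{n}=\Gamma(n+3/2)/(n!\,\Gamma(3/2))$ with $\Gamma(3/2)=\tfrac12\sqrt{\pi}$, so $\varphi_n(0)=c_n\,\Gamma(n+3/2)/(n!\pi)$ where $c_n=(n!/(\sqrt{2}\,\Gamma(n+3/2)))^{1/2}$. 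For the integral, I pass to radial coordinates, $\int_{\mathbb{R}^3}h(|v|^2)\,dv=4\pi\int_0^\infty h(r^2)r^2\,dr$, and substitute $u=r^2/2$, reducing it to a constant times $\int_0^\infty e^{-u/2}u^{1/2}L^{[1/2]}_n(u)\,du$; this integral is then evaluated by the Laguerre–Laplace identity $\int_0^\infty e^{-su}u^\alpha L^{(\alpha)}_n(u)\,du=\Gamma(\alpha+n+1)(s-1)^n/(n!\,s^{\alpha+n+1})$ at $s=\alpha=\tfrac12$, which after collecting powers of $2$ gives $\int_{\mathbb{R}^3}\varphi_n=8\sqrt{\pi}\,c_n(-1)^n\,\Gamma(n+3/2)/n!$.

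\textbf{Third stage (simplification).} Both ingredients carry the common factor $c_n\Gamma(n+3/2)/n!=2^{-1/4}(\Gamma(n+3/2)/n!)^{1/2}$, and the half-integer value $\Gamma(n+3/2)=\frac{(2n+1)!}{2^{2n+1}n!}\sqrt{\pi}$ converts this into $2^{-3/4}\pi^{1/4}\big((2n+1)!/(2^{2n}(n!)^2)\big)^{1/2}$. Substituting back, the prefactors $2^{-13/4}\pi^{-3/4}$ and $2^{-1/4}\pi^{3/4}$ are exactly calibrated so that the integral term contributes $\tfrac12(-1)^n\big((2n+1)!/(2^{2n}(n!)^2)\big)^{1/2}$ and the Dirac term contributes $\tfrac12\big((2n+1)!/(2^{2n}(n!)^2)\big)^{1/2}$. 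For $n\geq1$ these sum to the stated formula \eqref{Gn=}; for $n=0$ the term $-\delta_{0n}=-1$ exactly cancels $\tfrac12+\tfrac12=1$, so $G_0=0$; and $G_1=0$ because $1+(-1)^1=0$. Since $\varphi_0$ and $\varphi_1$ span the radial part of $\mathcal{N}$, this also recovers $G\in\mathcal{N}^\perp$.

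The only real difficulty here is bookkeeping: there is essentially no conceptual step once the Laguerre–Laplace identity (presumably recorded among the technical results in the Appendix) is available, but the argument threads a long chain of fractional powers of $2$ and $\pi$ through the radial integral and the half-integer gamma reductions, and the cancellations producing the clean factor $\tfrac12(1+(-1)^n)$ are somewhat delicate. To guard against arithmetic slips I would verify $n=0,1,2$ against \eqref{Gn=} directly before writing up the general case.
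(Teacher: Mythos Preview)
Your proof is correct and follows essentially the same route as the paper: both derive $G$ term by term from $F=\mu+\sqrt{\mu}\,G$, then write $G_n=-\delta_{0n}+2^{-13/4}\pi^{-3/4}\int\varphi_n+2^{-1/4}\pi^{3/4}\varphi_n(0)$ and evaluate the two remaining ingredients. The only difference is that the paper packages $\varphi_n(0)$ and $\int_{\mathbb{R}^3}\varphi_n$ into a separate lemma (Lemma~\ref{calcul phi_n}) stated without proof, whereas you actually derive both from the Laguerre--Laplace identity; your version is therefore more self-contained, and your bookkeeping of the fractional powers of $2$ and $\pi$ checks out.
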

\begin{proof}
The expression of $G$ follows from the definition of the Gaussian $\mu$.
We then compute
\[
\langle G , \varphi_n \rangle
= - ( \varphi_0 , \varphi_n )_{L^2}
+
2^{-\frac{13}{4}} \pi^{-\frac34} 
( 1 , \varphi_n )_{L^2}
+ 2^{-\frac{1}{4}} \pi^{\frac34} \varphi_n(0) 
\]
and the conclusion results directly from lemma \ref{calcul phi_n}. 
\end{proof}
We consider now a special Gaussian approximation $F_\varepsilon \in L^2$ of the 
distribution initial data $F=\mu+\delta$
and we obtain some spectral stability result in this case. 

%------------------ Prop mu + Delta ----------------
\begin{proposition}
We consider the initial data 
for $\varepsilon>0$
\begin{equation*}%\label{Ftilde e}
\tilde{F}_\varepsilon (w) = \mu(w) + \frac{1}{\varepsilon^3} \, 
  \mu\left( \frac{w}{\varepsilon}\right) . 
\end{equation*}
Following lemma \ref{lem G ortho N}, 
the rescaled initial data of $\tilde{F}_\varepsilon$ is 
$F_\varepsilon = \mu + \sqrt{\mu} \, G_\varepsilon$
where $G_\varepsilon \in \mathcal{N}^\perp$ and 
\begin{align*}%\label{Ge}
G_\varepsilon(v) =
- \sqrt{\mu(v)} 
+ 2^{-\frac52} \left(1+\varepsilon^{2}\right)^{3/2} 
\left(
\sqrt{\mu(\varepsilon v)} 
+ 
\frac{1}{\varepsilon^3}
\sqrt{\mu(v/\varepsilon)} 
 \right).
\end{align*} 
Then we have the following limit in the sense of distribution
as $\varepsilon\to0$: 
\begin{align*}%\label{Ge}
F_\varepsilon  &\to F = 
 2^{-\frac52} \left( \mu(2^{-\frac12} \cdot ) + (2^{\frac12})^3 \delta \right) , 
\\
G_\varepsilon  &\to G
= - \sqrt{\mu} + 2^{-\frac{13}{4}} \pi^{-\frac34} 
+ 2^{-\frac{1}{4}} \pi^{\frac34} \delta.
\end{align*} 
The coordinates of $G_\varepsilon$ in the spectral basis $(\varphi_n)_{n\geq 0}$ 
are given by:
\begin{align*}
&G_{\varepsilon,0} = G_{\varepsilon,1} = 0,
\\
&G_{\varepsilon,n} =
\frac{1+(-1)^n}{2} \, 
\frac{\left( 1-{\varepsilon}^2 \right) ^{n} }
{\left( 1+{\varepsilon}^2 \right) ^{n}}
\left( \frac{(2\,n+1)!}{2^{2n}(n!)^2}\right)^{\frac12}, 
\quad\forall n\geq 2.
\end{align*}
Moreover we have $G_{\varepsilon,n} \to G_{n}$ as $\varepsilon$ tends to 0
as $\varepsilon\to0$ where $G_n$ is given in \eqref{Gn=}.
\end{proposition}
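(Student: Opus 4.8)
The plan is to carry out the rescaling of Lemma~\ref{lem G ortho N} explicitly and then to reduce the spectral coefficients $\langle G_\varepsilon,\varphi_n\rangle$ to a one-dimensional Laguerre integral. First I would compute the two moments of $\tilde F_\varepsilon$ entering the rescaling constants: a direct computation (with the change of variables $w=\varepsilon u$ in the term $\varepsilon^{-3}\mu(w/\varepsilon)$) gives $\int_{\mathbb{R}^3}\tilde F_\varepsilon = 2$ and $\frac13\int_{\mathbb{R}^3}|w|^2\tilde F_\varepsilon = 1+\varepsilon^2$, hence
\[
\alpha = 2^{-\frac52}\,(1+\varepsilon^2)^{\frac32},\qquad
\beta  = 2^{-\frac12}\,(1+\varepsilon^2)^{\frac12}.
\]
Writing $G_\varepsilon = \mu^{-1/2}(F_\varepsilon-\mu)$ with $F_\varepsilon(v)=\alpha\,\tilde F_\varepsilon(\beta v)$, the key algebraic observation is that the quotients of Gaussians simplify: since $\beta^2/2=(1+\varepsilon^2)/4$ one checks $\mu(\beta v)/\sqrt{\mu(v)}=\sqrt{\mu(\varepsilon v)}$ and $\mu(\beta v/\varepsilon)/\sqrt{\mu(v)}=\sqrt{\mu(v/\varepsilon)}$, which yields the announced closed form for $G_\varepsilon$. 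The property $G_\varepsilon\in\mathcal{N}^\perp$, i.e.\ $G_{\varepsilon,0}=G_{\varepsilon,1}=0$, is automatic from Lemma~\ref{lem G ortho N}.

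For the distributional limits, I would use that $\alpha\to 2^{-\frac52}$ and $\beta\to 2^{-\frac12}$ depend continuously on $\varepsilon$, that $\varepsilon^{-3}\mu(\cdot/\varepsilon)\to\delta$ and $\varepsilon^{-3}\sqrt{\mu(\cdot/\varepsilon)}\to\bigl(\int_{\mathbb{R}^3}\sqrt\mu\bigr)\delta=2^{\frac94}\pi^{\frac34}\,\delta$ as standard approximate identities, and that $\delta(\beta\,\cdot)=\beta^{-3}\delta$ in $\mathbb{R}^3$. Substituting these into the closed forms of $F_\varepsilon$ and $G_\varepsilon$ produces exactly $F=2^{-\frac52}\mu(2^{-\frac12}\cdot)+2^{-1}\delta$ and $G=-\sqrt\mu+2^{-\frac{13}{4}}\pi^{-\frac34}+2^{-\frac14}\pi^{\frac34}\delta$ of Proposition~\ref{prop F=mu+d}.

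The substantial step is the computation of $G_{\varepsilon,n}$. For $n\geq 2$ the $-\sqrt\mu$ term drops against $\langle\varphi_0,\varphi_n\rangle=0$, so $G_{\varepsilon,n}=\alpha\langle\sqrt{\mu(\varepsilon\cdot)},\varphi_n\rangle+\tfrac{\alpha}{\varepsilon^3}\langle\sqrt{\mu(\cdot/\varepsilon)},\varphi_n\rangle$, and both brackets are instances of the radial integral
\[
\langle\sqrt{\mu(\lambda\,\cdot)},\varphi_n\rangle
= c_n\int_0^{\infty} e^{-\frac{(\lambda^2+1)r^2}{4}}\,
  L_n^{[\frac12]}\!\Bigl(\tfrac{r^2}{2}\Bigr)\,r^2\,dr ,
\]
with $c_n>0$ collecting the prefactor of $\varphi_n$ and the Gaussian normalisation. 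The substitution $x=r^2/2$ turns this into $\sqrt2\int_0^\infty e^{-sx}x^{1/2}L_n^{[\frac12]}(x)\,dx$ with $s=(\lambda^2+1)/2$, and the generating function of the Laguerre polynomials gives $\int_0^\infty e^{-sx}x^{1/2}L_n^{[\frac12]}(x)\,dx=\frac{\Gamma(n+3/2)}{n!}\frac{(s-1)^n}{s^{n+3/2}}$. Taking $\lambda=\varepsilon$ and $\lambda=1/\varepsilon$, the factor $\varepsilon^{-3}$ exactly cancels the $\varepsilon^3$ produced by the second integral, and after collecting constants --- using $\Gamma(n+3/2)=\sqrt\pi\,(2n+1)!/(2^{2n+1}n!)$ and the value of $c_n$ --- the two contributions combine through $(\varepsilon^2-1)^n+(1-\varepsilon^2)^n=(1+(-1)^n)(1-\varepsilon^2)^n$ into the stated formula for $G_{\varepsilon,n}$; the limit $G_{\varepsilon,n}\to G_n$ of \eqref{Gn=} then follows at once from $\bigl((1-\varepsilon^2)/(1+\varepsilon^2)\bigr)^n\to 1$. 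I expect the only delicate point to be the bookkeeping of the normalising constants (the factors $\left(\tfrac{n!}{\sqrt2\,\Gamma(n+3/2)}\right)^{1/2}$ and $\tfrac{1}{\sqrt{4\pi}}$ in $\varphi_n$ together with the Gaussian prefactors); the rest is a routine Laguerre computation.
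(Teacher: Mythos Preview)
Your proof is correct and follows essentially the same route as the paper. The only difference is one of packaging: the paper quotes its Lemma~\ref{calcul phi_n}, which records the closed formula $(\sqrt{\mu(a\,\cdot)},\varphi_n)_{L^2}=2^{9/4}\pi^{3/4}\,\varphi_n(0)\,(1-a^2)^n/(1+a^2)^{n+3/2}$, and then observes that the $\lambda=1/\varepsilon$ term equals $(-1)^n$ times the $\lambda=\varepsilon$ term after absorbing the factor $\varepsilon^{-3}$; you instead derive this inner product directly via the Laplace transform $\int_0^\infty e^{-sx}x^{1/2}L_n^{[1/2]}(x)\,dx=\tfrac{\Gamma(n+3/2)}{n!}\,(s-1)^n s^{-n-3/2}$. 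Both computations are the same identity and combine the two contributions identically through $(1+(-1)^n)(1-\varepsilon^2)^n$, so the arguments coincide up to this choice of where to isolate the Laguerre integral.
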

\noindent
{\bf Remark.}
There is continuity of the spectral coefficients : $G_{\varepsilon,n} \to G_{n}$ as $\varepsilon$ tends to 0. 

\begin{proof}
From Lemma \ref{lem G ortho N}, we set 
\begin{equation*}
 F_\varepsilon(v) = \alpha_\varepsilon \tilde{F}_\varepsilon(\beta_\varepsilon v)
\end{equation*}
where
\begin{align*}
\alpha_\varepsilon &= 
\frac{\left( \frac13 \int_{\mathbb{R}^3}w^2\,\tilde{F}_\varepsilon(w)\,dw \right)^{\frac32} }
      {  \left(\int_{\mathbb{R}^3}\tilde{F}_\varepsilon(w)\,dw\right)^{\frac52}} 
       = \frac{\sqrt{2}}{8} \, \left( 1 + \varepsilon^{2} \right)^{3/2} , \\
\beta_\varepsilon &= 
\frac{\left(\frac13  \int_{\mathbb{R}^3}w^2\,\tilde{F}_\varepsilon(w)\,dw \right)^{\frac12} }
{ \left(\int_{\mathbb{R}^3}\tilde{F}_\varepsilon(w)\,dw\right)^{\frac12}}
       = \frac{\sqrt{2}}{2}  \,\left( 1 + \varepsilon^{2} \right)^{1/2}.
\end{align*} 
Then $F_\varepsilon = \mu + \sqrt{\mu} \, G_\varepsilon$
where
\begin{equation*}
G_\varepsilon(v) = - \sqrt{\mu(v)} 
+ \frac{2^{\frac34} \left(1+\varepsilon^{2}\right)^{3/2} }{ 16 \pi^{3/4}}\left(
{e^{ -\frac{\varepsilon^2 \, v^2}{4} }
}
+ 
\frac{1}{\varepsilon^3}
 {e^{-\frac {{v}^{2}}{4\,\varepsilon^2 }}}
 \right) 
\end{equation*} 
or
\begin{equation*}
G_\varepsilon(v) = - \sqrt{\mu(v)} 
+ \frac{2^{\frac12} \left(1+\varepsilon^{2}\right)^{3/2} }{ 8}
\left(
\sqrt{\mu(\varepsilon v)} 
+ 
\frac{1}{\varepsilon^3}
\sqrt{\mu(v/\varepsilon)} 
 \right) , 
\end{equation*} 
then $G_\varepsilon \in \mathcal{N}^\perp$.
We compute from lemma \ref{calcul phi_n}
\begin{align*}
G_{\varepsilon,n} &= \left(G_\varepsilon, \varphi_n \right)_{L^2} 
= I_1 
+ \frac{2^{\frac12} \left(1+\varepsilon^{2}\right)^{3/2} }{ 8}
\left( I_2 + I_3 \right) 
\end{align*}
where 
\begin{align*}
I_1 &=  \left(- \sqrt{\mu}, \varphi_n \right)_{L^2} 
    =  \left(- \varphi_0, \varphi_n \right)_{L^2} = - \delta_{0,n} , \\
I_2 &=  
\left( \sqrt{\mu(\varepsilon \cdot)}, \varphi_n \right)_{L^2} 
= ( 2^{\frac94} \pi^{\frac34} )
\varphi_n(0)
\frac{\left( 1-{\varepsilon}^2 \right) ^{n} }
{\left( 1+{\varepsilon}^2 \right) ^{n+3/2}} , 
\\
I_3 &=  (-1)^n I_2.
\end{align*}
Finally we get :
\begin{equation*}
G_{\varepsilon,n} =
 - \delta_{0,n} \, +
\frac{1+(-1)^n}{2} \, 
\frac{\left( 1-{\varepsilon}^2 \right) ^{n} }
{\left( 1+{\varepsilon}^2 \right) ^{n}}
\left( \frac{(2\,n+1)!}{2^{2n}(n!)^2}\right)^{\frac12}.
\end{equation*}
\end{proof}

\subsection{Some results on the spherical harmonics}
We recall that 
\[
\varphi_{n}(v) = \left(\frac{n!}{\sqrt{2}\Gamma(n  + 3/2)}\right)^{1/2}e^{-\frac{|v|^{2}}{4}}
L^{[\frac12]}_{n}\left(\frac{|v|^{2}}{2}\right)  \, \frac{1}{\sqrt{4\pi}} 
\]
where the Laguerre polynomial $L^{(\alpha)}_{n}$~of order $\alpha$,~degree $n$ is
\begin{align*}
&L^{(\alpha)}_{n}(x)=\sum^{n}_{r=0}(-1)^{n-r}\frac{\Gamma(\alpha+n+1)}{r!(n-r)!
\Gamma(\alpha+n-r+1)}x^{n-r}.
\end{align*}
%
%------------------ Lemme phi_n ----------------
\begin{lemma}\label{calcul phi_n}
For $a>0$ and $n\geq0$ we have
\begin{align*}%\label{phi[n](0)=}
&\varphi_n(0)
=
\frac{1}{(2\,\pi)^{\frac34}}
\left({\frac {\left( 2\,n+1 \right) !}
{{2}^{2\,n}(n!)^2 }}\right)^{\frac12} , 
\\
& \int_{\mathbb{R}^3} \varphi_n(v) dv
= (-1)^n \, 2^3 \pi^{\frac32} \,
\varphi_n(0)  , 
\\
&(\sqrt{\mu(a \cdot)} , \varphi_n)_{L^2} 
= 
( 2^{\frac94} \pi^{\frac34} )
\varphi_n(0)
\frac{\left( 1-{a}^2 \right) ^{n} }
{\left( 1+{a}^2 \right) ^{n+3/2}}.
\end{align*}
\end{lemma}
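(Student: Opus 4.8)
The plan is to reduce all three formulas to one–dimensional Laplace–type integrals of Laguerre polynomials. Because $\varphi_n$ is radial, writing $\varphi_n(v)=\Phi_n(|v|)$ with $\Phi_n(r)=\bigl(n!/(\sqrt2\,\Gamma(n+\tfrac32))\bigr)^{1/2}(4\pi)^{-1/2}e^{-r^2/4}L^{[1/2]}_n(r^2/2)$, every integral in the statement is of the form $4\pi\int_0^{\infty}e^{-br^2/4}L^{[1/2]}_n(r^2/2)\,r^2\,dr$; the substitution $u=r^2/2$ (so $r^2\,dr=\sqrt2\,u^{1/2}\,du$ and $r^2/4=u/2$) turns this into $\sqrt2\int_0^{\infty}e^{-su}u^{1/2}L^{(1/2)}_n(u)\,du$ with $s=b/2$. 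The single formula I would use is the classical
\[
\int_0^{\infty}e^{-su}\,u^{\alpha}\,L^{(\alpha)}_n(u)\,du=\frac{\Gamma(\alpha+n+1)}{n!}\,\frac{(s-1)^n}{s^{\,\alpha+n+1}},\qquad \Re s>0,
\]
which can be quoted from a table of integral transforms or obtained in two lines by integrating the generating function $\sum_n L^{(\alpha)}_n(u)z^n=(1-z)^{-\alpha-1}\exp\!\bigl(-uz/(1-z)\bigr)$ against $e^{-su}u^{\alpha}$.

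First I would settle the value of $\varphi_n(0)$, which is the normalising constant in terms of which the other two answers are phrased. From the series for $L^{(\alpha)}_n$ one has $L^{(\alpha)}_n(0)=\binom{n+\alpha}{n}=\Gamma(\alpha+n+1)/(n!\,\Gamma(\alpha+1))$, so evaluating \eqref{phi=} at $v=0$ and using $\Gamma(\tfrac32)=\tfrac12\sqrt\pi$ together with the half-integer identity $\Gamma(n+\tfrac32)=(2n+1)!\,\sqrt\pi/(2^{2n+1}n!)$ collapses the gamma factors and leaves, after collecting powers of $2$ and $\pi$, exactly $\varphi_n(0)=(2\pi)^{-3/4}\bigl((2n+1)!/(2^{2n}(n!)^2)\bigr)^{1/2}$.

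Next I would prove the second and third identities in one stroke. For $\int_{\mathbb{R}^3}\varphi_n\,dv$ I apply the reduction above with $b=1$, i.e. $s=\tfrac12$: since $s-1=-\tfrac12$ the formula gives $(s-1)^n/s^{n+3/2}=(-1)^n2^{3/2}$, and reinserting the prefactor and rewriting the constant through $\varphi_n(0)$ yields $\int_{\mathbb{R}^3}\varphi_n\,dv=(-1)^n2^3\pi^{3/2}\varphi_n(0)$. For $(\sqrt{\mu(a\,\cdot)},\varphi_n)_{L^2}$ I would first note $\sqrt{\mu(a v)}=(2\pi)^{-3/4}e^{-a^2|v|^2/4}$, so the integrand is of the same type with $b=1+a^2$, i.e. $s=(1+a^2)/2$; then $(s-1)^n/s^{n+3/2}=\bigl((a^2-1)/2\bigr)^n\bigl((1+a^2)/2\bigr)^{-(n+3/2)}=2^{3/2}(a^2-1)^n/(1+a^2)^{n+3/2}$, and again absorbing the numerical constant into $\varphi_n(0)$ produces the claimed formula (up to the harmless choice of writing $(a^2-1)^n$ versus $(1-a^2)^n$ with an explicit sign). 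A convenient internal check: letting $a\to0$ in the third identity and using $\sqrt{\mu(0)}=(2\pi)^{-3/4}$ must reproduce the second one.

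Nothing here is deep; the work is entirely bookkeeping, and the one place to be genuinely careful is the sign. For both relevant parameters, $s=\tfrac12$ and $s=(1+a^2)/2$, one has $s<1$, so $(s-1)^n$ carries a factor $(-1)^n$, and one must fix once and for all whether this sign is displayed separately or folded into $(a^2-1)^n$. The other nuisance is matching the exact powers of $2$, $\pi$ and the gamma functions when converting the raw output of the integral formula into the single constant $\varphi_n(0)$; keeping the half-integer gamma identity in the normalised form above makes this routine. Convergence of the integrals is never an issue since $\Re s>0$ in every instance.
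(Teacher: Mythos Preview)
Your approach is correct and in fact supplies a genuine proof where the paper gives none: the paper's own ``proof'' of this lemma consists of the single sentence ``These equalities come from classical properties of the Hermite functions (we have checked them using Maple$^{\mbox{\scriptsize{\textregistered}}}$13 for integers $n\leq 20$).'' Your reduction to the Laplace transform
\[
\int_0^{\infty}e^{-su}\,u^{\alpha}\,L^{(\alpha)}_n(u)\,du=\frac{\Gamma(\alpha+n+1)}{n!}\,\frac{(s-1)^n}{s^{\,\alpha+n+1}}
\]
via the radial substitution $u=r^2/2$ is exactly the right classical identity to invoke, and the bookkeeping you outline (half-integer gamma identity, collecting powers of $2$ and $\pi$ through the single constant $\varphi_n(0)$) goes through cleanly.

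Your caution about the sign is well placed and worth stating explicitly: the Laplace formula produces $(s-1)^n$ with $s=(1+a^2)/2$, hence $(a^2-1)^n$, whereas the lemma as stated in the paper writes $(1-a^2)^n$. These differ by a factor $(-1)^n$, so the third identity in the paper appears to carry a sign slip for odd $n$. This is harmless for the paper's purposes, since every place the identity is used (Propositions~\ref{prop F=mu+d} and the $G_{\varepsilon,n}$ computation) either restricts to even $n$ via the factor $\tfrac{1+(-1)^n}{2}$ or pairs $I_2$ with $I_3=(-1)^nI_2$, so the ambiguity cancels. Your internal check ($a\to0$ recovers the second identity) is a good diagnostic: run it and you will see that the $(-1)^n$ in the second formula matches $(a^2-1)^n$, not $(1-a^2)^n$.
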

\begin{proof}
These equalities come from classical properties of the Hermite funtions
(we have checked them using Maple$^{\mbox{\scriptsize{\textregistered}}}$13 
for integers $n\leq 20$). 
\end{proof}

%====================================================================
% SECTION  BIBLIOGRAPHY
%====================================================================

%====================================================================
% FIN
%====================================================================
\end{document}